\theoremstyle{plain}
\newtheorem{theorem}{Theorem}[section]
\newtheorem{lemma}[theorem]{Lemma}
\newtheorem{proposition}[theorem]{Proposition}
\theoremstyle{definition} 
\newtheorem{remark}[theorem]{Remark}
\begin{document}

\author{Roberto Bramati \thanks{Dipartimento di Ingegneria Gestionale, dell'Informazione e della Produzione, Universit\`a degli Studi di Bergamo, Viale Marconi 5,
24044 Dalmine (BG), Italy. Email: roberto.bramati@unibg.it} \thanks{Department of Mathematics: Analysis, Logic and Discrete Mathematics, Ghent University, Krijgslaan 281, 9000 Ghent, Belgium.  Email: roberto.bramati@ugent.be} , Matteo Dalla Riva\thanks{Dipartimento di Ingegneria, Universit\`a degli Studi di Palermo, Viale delle Scienze, Ed. 8, 90128 Palermo, Italy. Email: matteo.dallariva@unipa.it} ,  Paolo Luzzini\thanks{Dipartimento di Scienze e Innovazione Tecnologica, Universit\`a degli Studi del Piemonte Orientale ``Amedeo Avogadro'', Viale Teresa Michel 11, 15121 Alessandria, Italy. Email: paolo.luzzini@uniupo.it} ,  Paolo Musolino\thanks{Dipartimento di Matematica ``Tullio Levi-Civita'', Universit\`a degli Studi di Padova, Via Trieste 63, 35121 Padova, Italy. Email: paolo.musolino@unipd.it}}

\title{Periodic layer potentials and {domain perturbations}}

\date{December 17, 2024}


\maketitle

\noindent
{\bf Abstract:} In this paper, we review the construction of periodic fundamental solutions and periodic layer potentials for various differential operators. Specifically, we focus on the Laplace equation, the Helmholtz equation, the Lam\'e system, and the heat equation. We then describe how these layer potentials can be applied to analyze domain perturbation problems. In particular, we present applications to the asymptotic behavior of quasi-periodic solutions for a Dirichlet problem for the Helmholtz equation in an unbounded domain with small periodic perforations as the size of each hole tends to $0$. Additionally, we investigate the dependence of spatially periodic solutions of an initial value Dirichlet problem for the heat equation on regular perturbations of the base of a parabolic cylinder.

\vspace{9pt}

\noindent
{\bf Keywords:}  periodic potential theory, periodic domains, layer potentials, elliptic equations, heat equation,  perturbation problems.
\vspace{9pt}

\noindent   
{{\bf 2020 Mathematics Subject Classification:}}  31B10, 35B10, 45A05, 35J05, 35K20, 47H30

\section{Introduction}

Potential theory is a powerful tool for studying boundary value problems. By employing integral representations involving single-layer and double-layer potentials, it enables the transformation of boundary value problems in an open set $\Omega$ into integral equations defined on its boundary, $\partial \Omega$. These integral equations can then be analyzed using Fredholm theory to determine their solvability. Then, coming back to the integral representations, one can construct solutions to the original boundary value problems.

On the other hand, concrete applications often lead to boundary value problems in periodic domains,  as in the works of  Ammari, Kang, and Lim \cite{AmKaLi06}, Ammari, Kang, and Touibi \cite{AmKaTo05},  Dryga\'s, Gluzman, Mityushev, and Nawalaniec \cite{DrGlMiNa20}, Gluzman, Mityushev, and Nawalaniec \cite{GlMiNa18}, and Kapanadze, Mishuris, and Pesetskaya \cite{KaMiPe15, KaMiPe15bis}.  Generalizations of the periodicity conditions can be also exploited, for example, for the analysis of physical phenomena in layered media (see, for example, Brekhovskikh and Godin \cite{BrGo90} and Geis, S\"andig, and Mishuris \cite{GeSaMi03}).

It is therefore natural to seek a periodic counterpart to classical potential theory. The first step in this direction is constructing a periodic analog of the fundamental solutions. Once this is achieved, periodic layer potentials can be developed using integral kernels derived from these periodic fundamental solutions. The next task is to study their regularity properties and establish the corresponding jump formulas. With these tools in place, periodic boundary value problems can then be effectively addressed using potential-theoretic methods.

The objective of this paper is twofold. The first aim of this note is to collect and review some known constructions of periodic fundamental solutions for various partial differential operators. Subsequently, we will describe the main properties of the corresponding layer potentials. In doing so, we aim to provide a useful reference for future applications of potential theory to periodic problems. Specifically, we will consider the Laplace equation, the Helmholtz equation, the Lam\'e system, and the heat equation.

The second aim is to illustrate, with some new results, how an approach based on potential theory can be applied to the study of periodic domain perturbation problems. The method we use is the so-called {\it Functional Analytic Approach}, which was proposed by Lanza de Cristoforis both for singularly perturbed (see, e.g.,~\cite{La02}) and regularly perturbed (see, e.g.,~\cite{La07}) boundary value problems. For a detailed presentation of the method, we refer to the monograph \cite{DaLaMu21}.

Indeed, in both pure and applied mathematics, it is often important to understand how a quantity associated with the solutions of a boundary value problem depends on perturbations of the domain. Such perturbations can generally be classified into two types: regular perturbations, which preserve the regularity of the domain, and singular perturbations, which do not.

In this paper, regular perturbations are obtained as deformations of a reference shape through a diffeomorphism $\phi$. Different diffeomorphisms generally produce different shapes, and our objective is to understand how the function under consideration (e.g., the solution to a boundary value problem in the perturbed domain) depends on the choice of the diffeomorphism. This is a typical point of view in  the framework of {\it Shape Optimization} problems, where the goal is to find optimal configurations for certain shape functionals. Several examples can be found in the monographs by  {Henrot and Pierre \cite{HePi18}  and Soko\l owski and Zol\'esio \cite{SoZo92}}.   To study such perturbations, we adopt the method proposed by Lanza de Cristoforis (see, for example, \cite{La07}), with the aim of determining the regularity of the map that associates the diffeomorphism with the solution of the perturbed boundary value problem.
  
The primary advantage of Lanza's method, compared to other approaches found in the literature, is its ability to yield high-regularity results, such as real analyticity or {$C^\infty$}-smoothness. However, another approach has recently emerged, producing results on regularly perturbed problems that are comparable to those obtained using the method of Lanza de Cristoforis. For this, we refer the reader to the works on {\em Shape Holomorphy} by Henr\'iquez and Schwab \cite{HeSc21} and Jerez-Hankes, Schwab, and Zech \cite{JeScZe17}.

As mentioned, we will also study singular domain perturbations. Specifically, we consider boundary value problems in domains with small perforations of size \(\epsilon\) and analyze the behavior of the solution as \(\epsilon\) tends to \(0\). The most common approach for such problems is that of {\em Asymptotic Analysis}, which seeks to compute an asymptotic expansion of the solution for \(\epsilon\) close to \(0\) (see, for example, Maz'ya, Movchan, and Nieves \cite{MaMoNi13}, and Maz'ya, Nazarov, and Plamenevskij \cite{MaNaPl00a, MaNaPl00b}). In this work, however, we adopt the {\em Functional Analytic Approach} (see \cite{DaLaMu21}), aiming to represent the solution using real analytic functions, which can then be expanded into converging power series of $\epsilon$. For more details on the application of the {\it Functional Analytic Approach} to singularly perturbed periodic problems, we refer to \cite[Chapters 12 and 13]{DaLaMu21}. Concrete applications of such results include the study of dilute composite materials (see Movchan,  Movchan, and Poulton  \cite{MoMoPo02}) and {\it Topological Optimization} (see  Novotny and Soko\l owski \cite{NoSo13} and Novotny,  Soko\l owski, and  {Z}ochowski \cite{NoSoZo19}).

As mentioned above, we begin by presenting periodic fundamental solutions and periodic layer potentials for the Laplace equation, the Helmholtz equation, the Lam\'e system, and the heat equation. To proceed, we first introduce the geometric setting.

Let $n \in \mathbb{N}\setminus \{0,1\}$ be a natural number that represents the dimension of the space. To define the periodicity parameters we take 
\[
(q_{11},\ldots,q_{nn}) \in \mathopen]0,+\infty[^n
\] 
and define a (fundamental) cell $Q \subseteq \mathbb{R}^n$ and a matrix $q \in {\mathbb{D}}_{n}^{+}({\mathbb{R}})$ by
\[
Q := \prod_{j=1}^n \mathopen]0,q_{jj}[, \quad
q := 
 \begin{pmatrix}
  q_{11} & 0 & \cdots & 0 \\
  0 & q_{22} & \cdots & 0 \\
  \vdots  & \vdots  & \ddots & \vdots  \\
  0 & 0 & \cdots & q_{nn} 
 \end{pmatrix}\, .
\]
Here above {${\mathbb{D}}^+_{n}({\mathbb{R}})$} is the set of $n\times n$ diagonal matrices with {positive} real entries {on the diagonal}. 
We denote by $|Q|_n$  the $n$-dimensional measure of the cell $Q$, by $\nu_Q$ the outward unit normal to $\partial Q$, where it exists, and by $q^{-1}$ the inverse matrix of $q$.  

In order to define our periodic domains, we take a regularity parameter $\alpha\in\mathopen]0,1[$ and a subset  $\Omega_Q$  of $\mathbb{R}^n$ satisfying the following  assumption:
	\begin{equation}\label{OmegaQ_def}
	\Omega_Q\,\,\mbox{is a bounded open  subset of}\,\,\mathbb{R}^n\,\,\mbox{of class}\,C^{1,\alpha}\  \mbox{such that}\ \overline{\Omega_Q}\subseteq Q\, .
	\end{equation}

{For the definition and properties of functions and sets of the Schauder class $C^{m,\alpha}$, $m \in \mathbb{N}$, we refer to Gilbarg and Trudinger \cite[\S 4.1, \S 6.2]{GiTr01}}.
Then we define the following two periodic open sets:
\[
\mathbb{S}_q[\Omega_Q] := \bigcup_{z \in \mathbb{Z}^n}(qz+\Omega_Q ), 
\qquad \mathbb{S}_q[\Omega_Q]^- := \mathbb{R}^n \setminus \overline{\mathbb{S}_q[\Omega_Q]}.
\]

{As already said,} the aim of the paper is twofold: on the one hand, we are going to present the construction of periodic single and double-layer potentials in the domains $\mathbb{S}_q[\Omega_Q]$ and $\mathbb{S}_q[\Omega_Q]^-$ for different differential operators; on the other hand, we are going to show applications to the study of boundary value problems in perturbed periodic domains.

The paper is organized as follows. Section \ref{s:lap} contains the construction of the periodic analog of the fundamental solution of the Laplace equation and the properties of the corresponding periodic layer potentials. In Section \ref{s:helm}, we turn to consider the case of the Helmholtz equation: we first present quasi-periodic layer potentials and then in Subsection \ref{ss:singhelm} we show an application of quasi-periodic layer potentials to the study of a quasi-periodic Dirichlet problem for the equation $\Delta u+k^2u=0$ in a domain with a periodic set of small holes. Section \ref{s:lam} contains an example of periodic potential theory for the case of elliptic systems and more precisely we consider the Lam\'e equations. Finally, in Section \ref{s:heat}, we consider a parabolic equation: we present periodic layer potentials for the heat equation and then in Subsection \ref{ss:regheat} we apply our technique to a Dirichlet problem for the heat equation in a regularly perturbed domain.


\section{Periodic layer potentials for the Laplace equation}\label{s:lap}

{We begin by introducing the} periodic potential theory for the Laplace equation: we introduce a periodic analog of the fundamental solution and then we define the corresponding periodic layer potentials.

To do so, we need to define our notion of periodicity. Since the Helmholtz equation will require a notion of quasi-periodicity, we provide definitions for both periodicity and quasi-periodicity simultaneously. If $\mathcal{U}\subseteq \mathbb{R}^n$ is a subset such that $qz+\mathcal{U}\subseteq \mathcal{U}$ for all $z \in \mathbb{Z}^n$ and  if $\eta \in \mathbb{R}^n$, then a function $f$ from $\mathcal{U}$ to  $\mathbb{C}$ is said to be $\eta$-quasi-periodic with respect to $Q$, or simply $(Q,\eta)$-quasi-periodic, if  
\[
f(x+qe_h)e^{-i\eta \cdot (x+qe_h)} = f(x) e^{-i\eta \cdot x} \qquad \forall x\in {\mathcal{U}}, \,\forall h \in \{1,\ldots,n\},  
\]
where $e_1,\ldots,e_n$ is the canonical basis of $\mathbb{R}^n$.  If $\eta=0$, we {will  say that $f$ is $Q$-periodic (or simply periodic)}. {Note that, when $\eta \neq 0$, saying that a function $f$ is $(Q,\eta)$-quasi-periodic is equivalent to saying that $x \mapsto f(x)e^{-i\eta \cdot x}$ is $Q$-periodic.}

Before defining a periodic analog  of the fundamental solution, we need to introduce  {the notion of periodicity for distributions}.

For a function $f$  defined in ${\mathbb{R}}^{n}$ and $y\in{\mathbb{R}}^{n}$, we set
$\tau_{y}f(x):= f(x-y)$ for all $x\in {\mathbb{R}}^{n}$. If $u$ is a distribution in ${\mathbb{R}}^{n}$, then we set
\[
<\tau_{y}u,f>=<u,\tau_{-y}f>\qquad\forall f\in {\mathcal{D}}( {\mathbb{R}}^{n})\,,
\]
where ${\mathcal{D}}(\mathbb{R}^n)$ denotes the space of functions of  $C^{\infty}(\mathbb{R}^n)$
with compact support. We say that a distribution $G$ is {$(Q,\eta)$-quasi-periodic if
\[
\tau_{q_{jj}e_{j}}(G E_{-i\eta})=G E_{-i\eta}\qquad\forall j\in\{1,\dots,n\}\,,
\]
where, if $\xi \in \mathbb{C}^n$,  $E_{\xi}$ denotes the function
\[
E_{\xi }(x):= e^{\xi \cdot x}
\qquad  \forall x\in{\mathbb{R}}^{n}\, 
\]
(and we keep the same symbol for the associated distribution). As  before, if the distribution $G$ is $(Q,0)$-quasi-periodic, we simply say that $G$ is $Q$-periodic or periodic.}

In the following theorem, we present the construction of a periodic analog of a fundamental solution for the Laplace equation. For a proof we refer to \cite[Thms.~12.2 and 12.3]{DaLaMu21} (see also  Ammari and Kang~\cite[p.~53]{AmKa07}).

\begin{theorem}
\label{psper}
The generalized series
\[
S_{q,n}
:=
\sum_{z\in {\mathbb{Z}}^{n}\setminus \{0\}} \frac{1}{-4\pi^2 |q^{-1}z|^2|Q|_n}E_{ 2\pi iq^{-1}z }
\]
defines a tempered distribution in ${\mathbb{R}}^{n}$ such that $S_{q,n}$ is {$Q$-periodic} and such that 
\[
\Delta S_{q,n}=\sum_{z\in {\mathbb{Z}}^{n}}
\delta_{qz}-\frac{1}{|Q|_n}\,,
\]
where $\delta_{qz}$ denotes the Dirac measure with mass at $qz$, for all $z\in {\mathbb{Z}}^{n}$. Moreover, $S_{q,n}$ is real analytic in ${\mathbb{R}}^{n}\setminus q{\mathbb{Z}}^{n}$, $S_{q,n}\in L^{1}_{ {\mathrm{loc}} }({\mathbb{R}}^{n})$, and $S_{q,n}(x)=S_{q,n}(-x)$ for all $x \in \mathbb{R}^n \setminus q\mathbb{Z}^n$. 
\end{theorem}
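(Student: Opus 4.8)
The plan is to read the formal series as the Fourier series, relative to the lattice $q\mathbb{Z}^{n}$, of a $Q$-periodic tempered distribution, and then to check the assertions one at a time: convergence in the space of tempered distributions, $Q$-periodicity, the distributional formula for $\Delta S_{q,n}$ (via the Poisson summation formula for $q\mathbb{Z}^{n}$), real analyticity off $q\mathbb{Z}^{n}$, local integrability, and evenness.

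For convergence, I would fix $f$ in the Schwartz space $\mathcal{S}(\mathbb{R}^{n})$ and note that $\langle E_{2\pi i q^{-1}z},f\rangle=\int_{\mathbb{R}^{n}}e^{2\pi i q^{-1}z\cdot x}f(x)\,dx$ is a value of the Fourier transform of $f$ at a point of the form $\mp q^{-1}z$, hence decays faster than any power of $|z|$; since $|q^{-1}z|^{-2}\leq C|z|^{-2}$, the series $\sum_{z\in\mathbb{Z}^{n}\setminus\{0\}}\frac{1}{-4\pi^{2}|q^{-1}z|^{2}|Q|_{n}}\langle E_{2\pi i q^{-1}z},f\rangle$ converges absolutely, and a routine estimate in terms of the Schwartz seminorms of $f$ shows that the resulting linear functional is continuous on $\mathcal{S}(\mathbb{R}^{n})$. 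Thus $S_{q,n}$ is a well-defined tempered distribution, the limit of its symmetric partial sums. Moreover $q^{-1}z\cdot q e_{h}=z_{h}\in\mathbb{Z}$, so each term $E_{2\pi i q^{-1}z}$ is $Q$-periodic, and $Q$-periodicity survives in the limit because the translations $\tau_{q_{jj}e_{j}}$ act continuously on tempered distributions.

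For the Laplacian, since $\Delta$ acts continuously on tempered distributions and $\Delta E_{2\pi i q^{-1}z}=-4\pi^{2}|q^{-1}z|^{2}E_{2\pi i q^{-1}z}$, differentiating term by term gives $\Delta S_{q,n}=\frac{1}{|Q|_{n}}\sum_{z\in\mathbb{Z}^{n}\setminus\{0\}}E_{2\pi i q^{-1}z}$. Now the Poisson summation formula for the lattice $q\mathbb{Z}^{n}$ (whose dual lattice is $q^{-1}\mathbb{Z}^{n}$ and whose covolume is $|\det q|=|Q|_{n}$) gives the distributional identity $\sum_{z\in\mathbb{Z}^{n}}E_{2\pi i q^{-1}z}=|Q|_{n}\sum_{z\in\mathbb{Z}^{n}}\delta_{qz}$; separating the term $z=0$ on the left, which is the constant $1$, yields precisely $\Delta S_{q,n}=\sum_{z\in\mathbb{Z}^{n}}\delta_{qz}-\frac{1}{|Q|_{n}}$.

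The \emph{most delicate} point is the regularity. On the open set $\mathbb{R}^{n}\setminus q\mathbb{Z}^{n}$ the Dirac masses drop out, so $\Delta S_{q,n}=-\frac{1}{|Q|_{n}}$ there; since the Laplacian is elliptic with analytic coefficients, its analytic hypoellipticity forces $S_{q,n}$ to be real analytic on $\mathbb{R}^{n}\setminus q\mathbb{Z}^{n}$. To describe the behavior at the lattice points, pick $r<\min_{j}q_{jj}$ so that $q\mathbb{Z}^{n}\cap B(0,r)=\{0\}$ and let $S_{n}$ be the classical fundamental solution of the Laplacian, with $\Delta S_{n}=\delta_{0}$; then $\Delta(S_{q,n}-S_{n})=-\frac{1}{|Q|_{n}}$ in $B(0,r)$, so $S_{q,n}-S_{n}$ is real analytic in $B(0,r)$, and since $S_{n}$ — a multiple of $|x|^{2-n}$ for $n\geq 3$ and of $\log|x|$ for $n=2$ — lies in $L^{1}_{\mathrm{loc}}$, so does $S_{q,n}$; by $Q$-periodicity the same conclusion holds near every point of $q\mathbb{Z}^{n}$, hence $S_{q,n}\in L^{1}_{\mathrm{loc}}(\mathbb{R}^{n})$. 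Finally, the coefficient $\frac{1}{-4\pi^{2}|q^{-1}z|^{2}|Q|_{n}}$ is unchanged under $z\mapsto -z$ and $E_{2\pi i q^{-1}z}(-x)=E_{2\pi i q^{-1}(-z)}(x)$, so re-indexing the series shows that the pullback of $S_{q,n}$ by $x\mapsto -x$ coincides with $S_{q,n}$; as $S_{q,n}$ is a genuine (real analytic) function on $\mathbb{R}^{n}\setminus q\mathbb{Z}^{n}$, this gives $S_{q,n}(x)=S_{q,n}(-x)$ there. The main obstacle is thus the passage from the distributional identity to pointwise analyticity — that is, invoking analytic hypoellipticity (or, equivalently, producing an integral representation of the analytic remainder $S_{q,n}-S_{n}$) — together with the bookkeeping needed to interchange $\Delta$, the translations $\tau_{q_{jj}e_{j}}$, and the infinite sum, all of which relies on the continuity of these operations on the space of tempered distributions.
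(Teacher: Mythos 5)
Your proof is correct and follows the standard construction of periodic fundamental solutions via Fourier series (convergence in $\mathcal{S}'$ from rapid decay of the Fourier transform of test functions, term-by-term differentiation, Poisson summation for the lattice $q\mathbb{Z}^n$, analytic hypoellipticity of $\Delta$ for regularity, comparison with the classical fundamental solution $S_n$ for $L^1_{\mathrm{loc}}$, and re-indexing for evenness). The paper defers its proof to an external reference (Dalla Riva--Lanza de Cristoforis--Musolino, Thms.\ 12.2 and 12.3), but this is precisely the approach taken there, so the two are in essential agreement.
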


\begin{remark}
We observe that a construction of a periodic analog of the fundamental solution for a general elliptic differential operator with constant coefficients can be carried out as it is done in Theorem \ref{psper} for $\Delta$. For a detailed proof, we refer again to \cite[Thm.~12.22]{DaLaMu21}, which indeed deals with the general case.
\end{remark}

By means of $S_{q,n}$ one can construct periodic layer potentials for the Laplace equation by simply replacing in the definition of layer potentials the (classical) fundamental solution for the Laplace equation with its periodic analog $S_{q,n}$.

Our framework will be the one of the Schauder classes and we accordingly introduce some notation. If $k\in \{0,1\}$ and $\beta\in\mathopen]0,1]$, then we introduce the spaces of real-valued periodic functions by setting:
 \index{$C^{k}_{q}(\overline{\mathbb{S}_q[\Omega_Q]} )$} \index{$C^{k,\beta}_{q}(\overline{\mathbb{S}_q[\Omega_Q]} )$} \index{$C^{k}_{q}(\overline{\mathbb{S}_q[\Omega_Q]^{-}})$}  \index{$C^{k,\beta}_{q}(\overline{\mathbb{S}_q[\Omega_Q]^{-}} )$}
\[
\begin{aligned}
&C^{k}_{q}(\overline{\mathbb{S}_q[\Omega_Q]} )
:=\left\{
u\in C^{k}(\overline{\mathbb{S}_q[\Omega_Q]} ):\,
u\ {\text{is $Q$-periodic}}
\right\}\,,\\
&C^{k,\beta}_{q}(\overline{\mathbb{S}_q[\Omega_Q]} )
:=\left\{
u\in C^{k,\beta}(\overline{\mathbb{S}_q[\Omega_Q]} ):\,
u\ {\text{is $Q$-periodic}}
\right\}\,,
\\
&C^{k}_{q}(\overline{\mathbb{S}_q[\Omega_Q]^{-}})
:=\left\{
u\in C^{k}(\overline{\mathbb{S}_q[\Omega_Q]^{-}}):\,
u\ {\text{is $Q$-periodic}}
\right\}\,,
\\
&C^{k,\beta}_{q}(\overline{\mathbb{S}_q[\Omega_Q]^{-}} )
:=\left\{
u\in C^{k,\beta}(\overline{\mathbb{S}_q[\Omega_Q]^{-}} ):\,
u\ {\text{is $Q$-periodic}}
\right\}\,.
\end{aligned}
\]
We consider $C^{k}_{q}(\overline{\mathbb{S}_q[\Omega_Q]} )$, $C^{k,\beta}_{q}(\overline{\mathbb{S}_q[\Omega_Q]} )$, $C^{k}_{q}(\overline{\mathbb{S}_q[\Omega_Q]^{-}})$, and $C^{k}_{q}(\overline{\mathbb{S}_q[\Omega_Q]^{-}})$ as Banach subspaces of $C^{k}_{b}(\overline{\mathbb{S}_q[\Omega_Q]} )$,  $C^{k,\beta}_{b}(\overline{\mathbb{S}_q[\Omega_Q]} )$,  $C^{k}_{b}(\overline{\mathbb{S}_q[\Omega_Q]^{-}})$, and  $C^{k,\beta}_{b}(\overline{\mathbb{S}_q[\Omega_Q]^{-}})$, respectively, where the subscript $b$ stands for `bounded.' For a precise definition we refer to \cite[Sections 2.6, 2.11, and 12.2]{DaLaMu21}. Clearly, when necessary, one can also provide the corresponding definition for complex valued functions.  We also set
\[
C^{k}_{q}(\overline{\mathbb{S}_q[\Omega_Q]},\mathbb{R}^n):=(C^{k}_{q}(\overline{\mathbb{S}_q[\Omega_Q]}))^n,\qquad  C^{k,\beta}_{q}(\overline{\mathbb{S}_q[\Omega_Q]} ,\mathbb{R}^n):=(C^{k,\beta}_{q}(\overline{\mathbb{S}_q[\Omega_Q]} ))^n,
\]
and 
\[
C^{k}_{q}(\overline{\mathbb{S}_q[\Omega_Q]^-},\mathbb{R}^n):=(C^{k}_{q}(\overline{\mathbb{S}_q[\Omega_Q]^-}))^n,\qquad  C^{k,\beta}_{q}(\overline{\mathbb{S}_q[\Omega_Q]^-} ,\mathbb{R}^n):=(C^{k,\beta}_{q}(\overline{\mathbb{S}_q[\Omega_Q]^-} )^n,
\]
for the corresponding spaces of vector-valued functions. Similarly, one defines $C^{k}_{q}(\mathbb{R}^n )$ and $C^{k,\beta}_{q}(\mathbb{R}^n)$.

We begin with the double-layer potential. Let $\alpha \in \mathopen ]0,1[$ and $\Omega_Q$ be as in \eqref{OmegaQ_def}. If  $\mu\in C^{0}(\partial\Omega_Q)$, then $\mathcal{D}_{q}[\partial\Omega_Q,\mu]$ denotes the {$Q$-periodic} double-layer potential with density $\mu$ defined by
\[
\mathcal{D}_{q}[\partial\Omega_Q,\mu](x):=-\int_{\partial\Omega_Q}\mu(y)\;\nu_{\Omega_Q}(y)\cdot\nabla S_{q,n}(x-y)\,d\sigma_y\qquad\forall x\in\mathbb{R}^n\,.
\]

 Moreover, we set
\[
\mathcal{K}_{q}[\partial\Omega_Q,\mu] := \mathcal{D}_{q}[\partial\Omega_Q,\mu]_{|\partial\Omega_Q} \qquad \mbox{ on } \partial\Omega_Q.
\]

In the following theorem, we collect some properties of the periodic double-layer potential for the Laplace equation. For a proof, we refer to \cite[Thm.~12.10]{DaLaMu21}.
\begin{theorem}
\label{perbvp.dbperpot}
Let $\alpha \in \mathopen ]0,1[$ and $\Omega_Q$ be as in \eqref{OmegaQ_def}. The following statements hold.
\begin{itemize}
\item[(i)] If $\mu\in C^{0}(\partial\Omega_Q)$, then the function $\mathcal{D}_{q}[\partial\Omega_Q,\mu]$ 
 is {$Q$-periodic} and 
\[
\Delta \mathcal{D}_{q}[\partial\Omega_Q,\mu](x)=0
\]
for all  $x\in 
 {\mathbb{R}}^{n}\setminus\partial{\mathbb{S}}_q[\Omega_Q]$.
\item[(ii)] If  $\mu \in C^{1,\alpha}(\partial \Omega_Q)$, then  the restriction $\mathcal{D}_{q}[\partial\Omega_Q,\mu]_{|\mathbb{S}_q[\Omega_Q]}$ can be extended to a continuous function $\mathcal{D}^{+}_{q}[\partial\Omega_Q,\mu] \in C^{1,\alpha}_q(\overline{\mathbb{S}_q[\Omega_Q]})$  and
\[
\mathcal{D}^{+}_{q}[\partial\Omega_Q,\mu] = \frac{1}{2}\mu+\mathcal{K}_{q}[\partial\Omega_Q,\mu]  \qquad \mbox{ on } \partial\Omega_Q\, .
\]
Moreover, the map from $C^{1,\alpha}(\partial \Omega_Q)$ to $ C^{1,\alpha}_q(\overline{\mathbb{S}_q[\Omega_Q]})$ 
that takes $\mu$ to $\mathcal{D}^{+}_{q}[\partial\Omega_Q,\mu]$ is linear and continuous.
\item[(iii)] If  $\mu \in C^{1,\alpha}(\partial \Omega_Q)$, then  the restriction $\mathcal{D}_{q}[\partial\Omega_Q,\mu]_{|\mathbb{S}_q[\Omega_Q]^-}$ can be extended to a continuous function $\mathcal{D}^{-}_{q}[\partial\Omega_Q,\mu] \in C^{1,\alpha}_q(\overline{\mathbb{S}_q[\Omega_Q]^-})$ and
\[
\mathcal{D}^{-}_{q}[\partial\Omega_Q,\mu] = -\frac{1}{2}\mu+\mathcal{K}_{q}[\partial\Omega_Q,\mu]  \qquad \mbox{ on } \partial\Omega_Q\, .
\]
Moreover, the map from $C^{1,\alpha}(\partial \Omega_Q)$ to $ C^{1,\alpha}_q(\overline{\mathbb{S}_q[\Omega_Q]^-})$ 
that takes $\mu$ to $\mathcal{D}^{-}_{q}[\partial\Omega_Q,\mu]$ is linear and continuous.
\item[(iv)] The map that takes $\mu \in C^{1,\alpha}(\partial\Omega_Q)$ to $\mathcal{K}_{q}[\partial\Omega_Q,\mu]$ is a compact operator from  $C^{1,\alpha}(\partial\Omega_Q)$ 
to itself.
\end{itemize}
\end{theorem}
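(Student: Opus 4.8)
The plan is to reduce statements (i)--(iv) to the corresponding \emph{classical} (non-periodic) properties of the double-layer potential of the Laplace operator, treating the periodic potential as a classical one plus a correction term with real analytic kernel. The device that makes this work is the splitting $S_{q,n}=\mathcal{S}_n+R_{q,n}$, where $\mathcal{S}_n$ denotes the usual fundamental solution of $\Delta$ and $R_{q,n}:=S_{q,n}-\mathcal{S}_n$; by Theorem~\ref{psper} ($S_{q,n}$ is real analytic on $\mathbb{R}^n\setminus q\mathbb{Z}^n$) together with the construction recalled there (from which $R_{q,n}$ extends real-analytically across $0$, see \cite{DaLaMu21}), the function $R_{q,n}$ is real analytic on the open set $\mathbb{R}^n\setminus(q\mathbb{Z}^n\setminus\{0\})$. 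Since $\overline{\Omega_Q}\subseteq Q$ by \eqref{OmegaQ_def}, this set contains the compact set $\overline{\Omega_Q}-\partial\Omega_Q$ --- indeed $\overline{\Omega_Q}-\partial\Omega_Q$ lies in the open box $\prod_{j=1}^n\mathopen]-q_{jj},q_{jj}[$, which meets $q\mathbb{Z}^n$ only at the origin --- so one can fix a bounded open neighbourhood $W$ of $\overline{\Omega_Q}$ with $W-\partial\Omega_Q$ still contained in the analyticity region of $R_{q,n}$.

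Statement (i) I would dispatch directly. The $Q$-periodicity of $\mathcal{D}_q[\partial\Omega_Q,\mu]$ is inherited from that of $\nabla S_{q,n}$ (Theorem~\ref{psper}), since replacing $x$ by $x+qe_h$ only shifts the argument $x-y$ of the kernel by $qe_h$. For harmonicity, if $x\notin\partial\mathbb{S}_q[\Omega_Q]$ then $x-y\notin q\mathbb{Z}^n$ for every $y\in\partial\Omega_Q$ (again because $\overline{\Omega_Q}\subseteq Q$), hence the kernel is real analytic in $x$ near such points, one may differentiate under the integral sign, and by the formula of Theorem~\ref{psper} one has $\Delta_x S_{q,n}(x-y)=-1/|Q|_n$ there, so $\nabla_x\Delta_x S_{q,n}(x-y)=0$ and $\Delta\mathcal{D}_q[\partial\Omega_Q,\mu](x)=0$.

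For (ii) and (iii), on $W$ I would write $\mathcal{D}_q[\partial\Omega_Q,\mu]=\mathcal{D}[\partial\Omega_Q,\mu]+\mathcal{D}_R[\partial\Omega_Q,\mu]$, where $\mathcal{D}$ is the classical double-layer potential (with the same sign convention) and $\mathcal{D}_R[\partial\Omega_Q,\mu](x):=-\int_{\partial\Omega_Q}\mu(y)\,\nu_{\Omega_Q}(y)\cdot\nabla R_{q,n}(x-y)\,d\sigma_y$ has a smooth kernel, hence defines a linear continuous operator from $C^{1,\alpha}(\partial\Omega_Q)$ (even from $C^0(\partial\Omega_Q)$) into $C^{1,\alpha}(\overline{W'})$ for every $W'\Subset W$. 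By the classical theory of layer potentials on $C^{1,\alpha}$ domains, the restrictions of $\mathcal{D}[\partial\Omega_Q,\mu]$ to $\Omega_Q$ and to $W\setminus\overline{\Omega_Q}$ extend in $C^{1,\alpha}$ up to $\partial\Omega_Q$ with traces $\tfrac12\mu+\mathcal{K}[\partial\Omega_Q,\mu]$ and $-\tfrac12\mu+\mathcal{K}[\partial\Omega_Q,\mu]$ respectively, $\mathcal{K}$ being the classical boundary double-layer operator. Summing the two contributions and noting $\mathcal{K}_q[\partial\Omega_Q,\mu]=\mathcal{K}[\partial\Omega_Q,\mu]+\mathcal{D}_R[\partial\Omega_Q,\mu]_{|\partial\Omega_Q}$ (which is merely the definition $\mathcal{K}_q=\mathcal{D}_q{}_{|\partial\Omega_Q}$) gives the extension on $\overline{\Omega_Q}$, with the asserted traces $\pm\tfrac12\mu+\mathcal{K}_q[\partial\Omega_Q,\mu]$; one then periodizes, using that the closures $qz+\overline{\Omega_Q}$, $z\in\mathbb{Z}^n$, are pairwise disjoint, to obtain $\mathcal{D}^{\pm}_q[\partial\Omega_Q,\mu]$ and the continuity of $\mu\mapsto\mathcal{D}^{\pm}_q[\partial\Omega_Q,\mu]$. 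For (iii) there is the extra point that $\mathbb{S}_q[\Omega_Q]^-$ is unbounded: here I would argue that, by the periodicity from (i), the behaviour of $\mathcal{D}_q[\partial\Omega_Q,\mu]$ near each $qz+\partial\Omega_Q$ is a translate of its behaviour near $\partial\Omega_Q$, while away from a fixed neighbourhood of $\partial\mathbb{S}_q[\Omega_Q]$ the kernel $\nabla S_{q,n}(x-y)$ is real analytic and bounded with all its derivatives (uniformly in $x$, by periodicity), so $\mathcal{D}_q[\partial\Omega_Q,\mu]$ is smooth and bounded there; gluing these finitely many pieces yields membership in $C^{1,\alpha}_q(\overline{\mathbb{S}_q[\Omega_Q]^-})$.

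For (iv) I would use the same splitting on the boundary: $\mathcal{K}_q[\partial\Omega_Q,\,\cdot\,]=\mathcal{K}[\partial\Omega_Q,\,\cdot\,]+\mathcal{D}_R[\partial\Omega_Q,\,\cdot\,]_{|\partial\Omega_Q}$. The first operator is compact on $C^{1,\alpha}(\partial\Omega_Q)$ by the classical theory (on a $C^{1,\alpha}$ surface the double-layer kernel $\nu_{\Omega_Q}(y)\cdot\nabla\mathcal{S}_n(x-y)$ is only weakly singular). The second is compact because its kernel is smooth: $\mathcal{D}_R[\partial\Omega_Q,\,\cdot\,]$ maps $C^0(\partial\Omega_Q)$ boundedly into, say, $C^2(\overline{W'})$, which embeds compactly into $C^{1,\alpha}(\overline{W'})$, and restriction to $\partial\Omega_Q$ is bounded from $C^{1,\alpha}(\overline{W'})$ into $C^{1,\alpha}(\partial\Omega_Q)$; a sum of compact operators being compact, the claim follows. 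I expect the only genuine obstacle to be organisational rather than conceptual: the hard analytic content --- the jump relations, the $C^{1,\alpha}$ mapping properties, and the compactness of $\mathcal{K}$ on a $C^{1,\alpha}$ boundary --- is the classical one and is quoted from \cite{DaLaMu21}, so the work here lies in the bookkeeping needed to transfer everything from a single cell to the full periodic (and, for (iii), unbounded) setting, which is handled via the $Q$-periodicity of part (i) and the analyticity and boundedness of the kernel away from $\partial\mathbb{S}_q[\Omega_Q]$.
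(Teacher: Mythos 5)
Your proposal is correct and follows essentially the same route as the cited reference \cite[Thm.~12.10]{DaLaMu21}: the splitting $S_{q,n}=\mathcal{S}_n+R_{q,n}$ into the classical fundamental solution and a remainder that extends real-analytically across $0$ (but keeps the singularities on $q\mathbb{Z}^n\setminus\{0\}$) is precisely the device used there, and the jump relations, $C^{1,\alpha}$-mapping properties and compactness of $\mathcal{K}_q$ are then obtained from the classical non-periodic statements plus the smoothing effect of the analytic correction, finally periodized to the whole of $\overline{\mathbb{S}_q[\Omega_Q]}$ and $\overline{\mathbb{S}_q[\Omega_Q]^-}$. The bookkeeping you describe (choosing $W$ so that $W-\partial\Omega_Q$ stays in the analyticity set of $R_{q,n}$, gluing across translated cells, and treating the unboundedness in (iii) via $Q$-periodicity and $C^{1,\alpha}_b$-membership) is exactly what is needed, and the compactness argument via the factorization through $C^2(\overline{W'})\hookrightarrow C^{1,\alpha}(\overline{W'})$ for the regular part is sound.
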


As we have done for the periodic double-layer potential, we can define the periodic single-layer potential for the Laplace equation. Let $\alpha \in \mathopen ]0,1[$ and $\Omega_Q$ be as in \eqref{OmegaQ_def}. If $\mu\in C^{0}(\partial\Omega_Q)$,  then  $\mathcal{S}_{q}[\partial\Omega_Q,\mu]$ denotes the {$Q$-periodic} single-layer potential with density $\mu$ defined by
\[
\mathcal{S}_{q}[\partial\Omega_Q,\mu](x):=\int_{\partial\Omega_Q}S_{q,n}(x-y)\mu(y)\,d\sigma_y\qquad\forall x\in\mathbb{R}^n\, .
\] 

Moreover, we set 
\[
\left(\mathcal{K}_{q}\right)^*[\partial\Omega_Q,\mu](x) := \int_{\partial \Omega_Q}\nu_{\Omega_Q}(x) \cdot\nabla S_{q,n}(x-y)\mu(y)\,d\sigma_y \qquad \forall x \in \partial\Omega_Q.
\]

The following theorem presents some basic properties of the periodic single-layer potential (see \cite[Thm.~12.8]{DaLaMu21} for a proof).

\begin{theorem}
\label{perbvp.sperpot}
Let $\alpha \in \mathopen]0,1[$ and $\Omega_Q$ be as in \eqref{OmegaQ_def}. The following statements hold.
\begin{itemize}
\item[(i)] If $\mu\in C^{0}(\partial\Omega_Q)$, then the function $\mathcal{S}_{q}[\partial\Omega_Q,\mu]$ 
is continuous. Moreover, $\mathcal{S}_{q}[\partial\Omega_Q,\mu]$ is {$Q$-periodic} and 
\[
\Delta \mathcal{S}_{q}[\partial\Omega_Q,\mu](x)=
-\frac{1}{|Q|_n}\int_{\partial \Omega_Q}\mu\, d\sigma
\]
for all  $x\in 
 {\mathbb{R}}^{n}\setminus\partial{\mathbb{S}}_q[\Omega_Q]$.
 \item[(ii)] If  $\mu \in C^{0,\alpha}(\partial \Omega_Q)$, then the function $\mathcal{S}^{+}_{q}[\partial\Omega_Q,\mu] := \mathcal{S}_{q}[\partial\Omega_Q,\mu]_{|\overline{\mathbb{S}_q[\Omega_Q]}}$ belongs to $C^{1,\alpha}_q(\overline{\mathbb{S}_q[\Omega_Q]})$ and
\[
\nu_{\Omega_Q}(x) \cdot \nabla\mathcal{S}^{+}_{q}[\partial\Omega_Q,\mu](x) = -\frac{1}{2}\mu(x)+\left(\mathcal{K}_{q}\right)^*[\partial\Omega_Q,\mu](x) \qquad \forall x \in \partial\Omega_Q.
\]
Moreover, the map from $C^{0,\alpha}(\partial \Omega_Q)$ to $ C^{1,\alpha}_q(\overline{\mathbb{S}_q[\Omega_Q]})$ 
that takes $\mu$ to $\mathcal{S}^{+}_{q}[\partial\Omega_Q,\mu]$ is linear and continuous.
\item[(iii)] If  $\mu \in C^{0,\alpha}(\partial \Omega_Q)$, then the function $\mathcal{S}^{-}_{q}[\partial\Omega_Q,\mu] := \mathcal{S}_{q}[\partial\Omega_Q,\mu]_{|\overline{\mathbb{S}_q[\Omega_Q]^-}}$ belongs to $C^{1,\alpha}_q(\overline{\mathbb{S}_q[\Omega_Q]^-})$ and
\[
\nu_{\Omega_Q}(x) \cdot \nabla\mathcal{S}^{-}_{q}[\partial\Omega_Q,\mu](x) = \frac{1}{2}\mu(x)+\left(\mathcal{K}_{q}\right)^*[\partial\Omega_Q,\mu](x) \qquad \forall x \in \partial\Omega_Q.
\]
Moreover, the map from $C^{0,\alpha}(\partial \Omega_Q)$ to $ C^{1,\alpha}(\overline{\mathbb{S}_q[\Omega_Q]^-})$ 
that takes $\mu$ to $\mathcal{S}^{-}_{q}[\partial\Omega_Q,\mu]$ is linear and continuous.
\item[(iv)] The map that takes $\mu \in C^{0,\alpha}(\partial\Omega_Q)$ to $\left(\mathcal{K}_{q}\right)^*[\partial\Omega_Q,\mu]$ is a compact operator from  $C^{0,\alpha}(\partial\Omega_Q)$ to itself.
 \end{itemize}
\end{theorem}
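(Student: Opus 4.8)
The plan is to deduce everything from the corresponding facts about the \emph{classical} (non-periodic) single-layer potential together with the structure of the periodic fundamental solution $S_{q,n}$ recorded in Theorem \ref{psper}. The starting observation is that, by Theorem \ref{psper}, the difference $R_{q,n}(x) := S_{q,n}(x) - S_n(x)$, where $S_n$ is the classical fundamental solution of $\Delta$, extends to a real analytic function in a neighbourhood of the origin; indeed $\Delta S_{q,n} = \sum_{z\in\mathbb Z^n}\delta_{qz} - \frac{1}{|Q|_n}$, so near $0$ we have $\Delta R_{q,n} = -\frac1{|Q|_n}$, a smooth right-hand side, whence $R_{q,n}$ is real analytic on a neighbourhood of $\overline Q$ minus the other lattice points, and in particular on a neighbourhood of $(\overline{\Omega_Q}-\overline{\Omega_Q})$. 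Consequently, for $x$ in a bounded neighbourhood of $\overline{\Omega_Q}$ we can split
\[
\mathcal S_q[\partial\Omega_Q,\mu](x) = \int_{\partial\Omega_Q} S_n(x-y)\mu(y)\,d\sigma_y + \int_{\partial\Omega_Q} R_{q,n}(x-y)\mu(y)\,d\sigma_y\,,
\]
the first term being the classical single-layer potential $v[\partial\Omega_Q,\mu]$ and the second term being an integral operator with a real analytic kernel, hence smoothing: it maps $C^0(\partial\Omega_Q)$ into $C^\infty$ of a neighbourhood of $\overline{\Omega_Q}$, with continuous dependence on $\mu$. The same splitting applies to $(\mathcal K_q)^*$, whose kernel $\nu_{\Omega_Q}(x)\cdot\nabla S_{q,n}(x-y)$ differs from the classical one $\nu_{\Omega_Q}(x)\cdot\nabla S_n(x-y)$ by the smooth kernel $\nu_{\Omega_Q}(x)\cdot\nabla R_{q,n}(x-y)$.

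From here the four assertions follow by combining the classical theory with this decomposition. For (i): continuity of $\mathcal S_q[\partial\Omega_Q,\mu]$ and $Q$-periodicity are immediate — periodicity because $S_{q,n}$ is $Q$-periodic and translating $x$ by $qe_h$ amounts to translating the argument of $S_{q,n}$, which leaves it unchanged; the Laplacian formula follows by applying $\Delta$ under the integral sign and using $\Delta S_{q,n}(x-y) = \sum_z\delta_{qz}(x-y) - \frac1{|Q|_n}$, where for $x\notin\partial\mathbb S_q[\Omega_Q]$ and $y\in\partial\Omega_Q$ the Dirac terms do not contribute and the constant term integrates to $-\frac1{|Q|_n}\int_{\partial\Omega_Q}\mu\,d\sigma$. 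For (ii) and (iii): the classical single-layer potential $v[\partial\Omega_Q,\mu]$ with $\mu\in C^{0,\alpha}(\partial\Omega_Q)$ restricts to elements of $C^{1,\alpha}(\overline{\Omega_Q})$ and $C^{1,\alpha}_{\mathrm{loc}}$ of the exterior, and satisfies the classical jump relations $\nu_{\Omega_Q}\cdot\nabla v^{\pm} = \mp\frac12\mu + (\mathcal K^*)[\partial\Omega_Q,\mu]$ on $\partial\Omega_Q$ (see Gilbarg–Trudinger, or the references to \cite{DaLaMu21}); adding the smooth remainder $\int_{\partial\Omega_Q} R_{q,n}(\cdot-y)\mu(y)\,d\sigma_y$, whose normal derivative on $\partial\Omega_Q$ is exactly $(\mathcal K_q)^*[\partial\Omega_Q,\mu] - (\mathcal K^*)[\partial\Omega_Q,\mu]$ with no jump, produces the stated formulas with $(\mathcal K_q)^*$ in place of $\mathcal K^*$. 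Membership in the periodic Schauder spaces $C^{1,\alpha}_q$ then follows since $\mathcal S_q[\partial\Omega_Q,\mu]$ is globally $Q$-periodic and $C^{1,\alpha}$ up to $\partial\mathbb S_q[\Omega_Q]$ from either side; linearity and continuity of $\mu\mapsto\mathcal S_q^{\pm}[\partial\Omega_Q,\mu]$ are inherited from the classical mapping properties plus the boundedness of the smoothing remainder. For (iv): compactness of $\mu\mapsto(\mathcal K_q)^*[\partial\Omega_Q,\mu]$ from $C^{0,\alpha}(\partial\Omega_Q)$ to itself follows because $(\mathcal K^*)[\partial\Omega_Q,\cdot]$ is compact on $C^{0,\alpha}(\partial\Omega_Q)$ by the classical theory (the kernel is weakly singular, improving Hölder regularity on a $C^{1,\alpha}$ boundary), while the remainder $\mu\mapsto\nu_{\Omega_Q}\cdot\nabla R_{q,n}(\cdot-y)$ integrated against $\mu$ has a $C^\infty$ kernel and hence is a compact (indeed smoothing) operator on $C^{0,\alpha}(\partial\Omega_Q)$; a sum of compact operators is compact.

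The genuinely non-trivial input — the part one cannot simply cite as "classical" — is the regularity of the remainder kernel $R_{q,n}$ near the diagonal and the consequent mapping properties of the associated boundary integral operators; but this is entirely controlled by Theorem \ref{psper}, which tells us $S_{q,n}$ is real analytic off $q\mathbb Z^n$ and that its only singularity in a neighbourhood of $\overline Q$ is the Newtonian one at the origin. The main technical obstacle, such as it is, lies in the classical building blocks themselves — establishing the $C^{1,\alpha}$ up-to-the-boundary regularity of the single-layer potential on a $C^{1,\alpha}$ domain and the classical jump formula for the normal derivative — which, however, are standard and precisely the content of \cite[Thm.~12.8]{DaLaMu21} to which the statement refers; so in the present context the proof is essentially a bookkeeping reduction to that reference combined with the elementary observation that adding an analytic-kernel operator changes neither the Schauder regularity nor the jump behaviour nor the compactness.
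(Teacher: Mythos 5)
Your proof is correct and follows the same route as the reference the paper cites for this result (\cite[Thm.~12.8]{DaLaMu21}): one decomposes $S_{q,n}=S_n+R_{q,n}$, observes via $\Delta R_{q,n}=-1/|Q|_n$ and elliptic regularity that $R_{q,n}$ is real analytic on a neighbourhood of $\overline{\Omega_Q}-\overline{\Omega_Q}$, and then transfers the classical regularity, jump, and compactness results for the single-layer potential across the splitting, the analytic-kernel remainder being harmless for all four claims. The only places where you are slightly brisker than the source are the justification of differentiating under the integral sign in (i) and the compactness of the classical $(\mathcal{K})^*$ on $C^{0,\alpha}(\partial\Omega_Q)$ for a $C^{1,\alpha}$ boundary, but both are standard and you correctly identify them as imports from the classical theory rather than new content.
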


{\begin{remark}
Theorem \ref{perbvp.sperpot} (i) implies in particular that if we want $\mathcal{S}_{q}[\partial\Omega_Q,\mu]$ to be harmonic, we need to take a density $\mu$ such that $\int_{\partial \Omega_Q}\mu\, d\sigma=0$.
\end{remark}}

We note that periodic layer potentials for the Laplace equation have been used, for example, in \cite{DaMu13, LaMu13, Mu12} for the analysis of linear and nonlinear boundary value problems for the Laplace equation in an unbounded domain with small periodic perforations.

\section{Quasi-periodic layer potentials for the Helmholtz equation}\label{s:helm}

{We pass to} present quasi-periodic layer potentials for the Helmholtz equation {and their properties}.  {After that,  we also} show an application to the study of a quasi-periodic boundary value problem for the equation  $\Delta u +k^2 u =0$  in a singularly perturbed periodic domain.

For $k \in \mathbb{C}$ and $\eta \in \mathbb{R}^n$ we are interested in solving boundary value problems for  the   Helmholtz equation $\Delta u +k^2 u =0$ for $(Q,\eta)$-quasi-periodic functions. In this section, we will consider complex-valued functions.
To define $(Q,\eta)$-quasi-periodic layer potentials for such equation, we now construct a $(Q,\eta)$-quasi-periodic distribution that will play the role of the fundamental solution of the Helmholtz operator $\Delta+k^2$. A proof can be found in \cite[Props.~2.4 and 2.5]{BrDaLuMu24}. For this construction, see also  Ammari, Kang and Lee \cite[p. 123]{AmKaLe09}, Linton \cite{Li98},  Poulton, Botten, McPhedran and Movchan \cite{PoBoMcMo99}.

\begin{theorem}\label{qnSR}
Let  $\eta \in \mathbb{R}^n$ and $k \in \mathbb{C}$. Let  
\[
Z_{q,\eta}(k) := \left\{z \in \mathbb{Z}^n : k^2 = |2\pi q^{-1}z +\eta|^2\right\}.
\]
Then the set $Z_{q,\eta}(k)$ is finite and the generalized series
\begin{equation*}
G^{k}_{q,\eta} := \sum_{z \in \mathbb{Z}^n \setminus Z_{q,\eta}(k)  } \frac{1}{|Q|_n(k^2-|2\pi q^{-1}z +\eta|^2)}{E_{i(2\pi q^{-1}z +\eta)}}
\end{equation*}
defines a tempered distribution. Moreover, $G^{k}_{q,\eta}$  is $(Q,\eta)$-quasi-periodic  in the sense of distributions and 
\[
(\Delta+k^2)G^{k}_{q,\eta} = \sum_{z \in \mathbb{Z}^n}\delta_{qz}e^{iqz\cdot \eta} - \sum_{z \in Z_{q,\eta}(k)}\frac{1}{|Q|_n}{E_{i(2\pi q^{-1}z +\eta)}}\, ,
\]
$G^{k}_{q,\eta}$  is real analytic    in $\mathbb{R}^n\setminus q\mathbb{Z}^n$, and $G^{k}_{q,\eta} \in L^1_{\mathrm{loc}}(\mathbb{R}^n)$.
\end{theorem}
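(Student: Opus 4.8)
The plan is to verify the assertions one by one, each time reducing to a standard fact (discreteness of lattices, Poisson summation, analytic hypoellipticity), in the same spirit as the treatment of $S_{q,n}$ in Theorem~\ref{psper}. First I would dispose of the finiteness of $Z_{q,\eta}(k)$: if $k^2$ is not a non-negative real number the set is empty, while if $k^2\geq 0$ then $z\in Z_{q,\eta}(k)$ forces $2\pi q^{-1}z+\eta$ to lie on the sphere of radius $|k|$ about the origin, and since $2\pi q^{-1}\mathbb{Z}^n+\eta$ is a discrete (hence locally finite) subset of $\mathbb{R}^n$, only finitely many of its points meet that compact sphere. To see that the generalized series defines a tempered distribution, I would test against $\phi\in\mathcal{S}(\mathbb{R}^n)$: writing $\xi_z:=2\pi q^{-1}z+\eta$ one has $\langle E_{i\xi_z},\phi\rangle=\widehat\phi(-\xi_z)$ up to a fixed normalisation, so the pairing becomes $\sum_{z\notin Z_{q,\eta}(k)}\frac{\widehat\phi(-\xi_z)}{|Q|_n(k^2-|\xi_z|^2)}$; since $|\xi_z|\to\infty$ and $|k^2-|\xi_z|^2|\geq c|z|^2$ for $|z|$ large, the coefficients are $O(|z|^{-2})$ while $\widehat\phi(-\xi_z)=O_N(|z|^{-N})$ for every $N$, so the series converges absolutely and continuously in $\phi$, exhibiting $G^k_{q,\eta}$ as the $\mathcal{S}'$-limit of its symmetric partial sums.

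Next I would treat quasi-periodicity and the distributional PDE together. Since $2\pi q^{-1}z\cdot q_{jj}e_j=2\pi z_j\in2\pi\mathbb{Z}$, each term of $G^k_{q,\eta}E_{-i\eta}=\sum_{z\notin Z_{q,\eta}(k)}\frac{1}{|Q|_n(k^2-|\xi_z|^2)}E_{2\pi iq^{-1}z}$ is invariant under $\tau_{q_{jj}e_j}$, hence $G^k_{q,\eta}E_{-i\eta}$ is $Q$-periodic, i.e.\ $G^k_{q,\eta}$ is $(Q,\eta)$-quasi-periodic. For the PDE I would apply $\Delta+k^2$ termwise, which is legitimate because $\Delta+k^2$ is continuous on $\mathcal{S}'(\mathbb{R}^n)$ and $G^k_{q,\eta}$ is the $\mathcal{S}'$-limit of its partial sums; using $(\Delta+k^2)E_{i\xi_z}=(k^2-|\xi_z|^2)E_{i\xi_z}$ this gives $(\Delta+k^2)G^k_{q,\eta}=\frac{1}{|Q|_n}\sum_{z\notin Z_{q,\eta}(k)}E_{i\xi_z}$. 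Then I would invoke the Poisson summation formula for the lattice $q\mathbb{Z}^n$ (covolume $|Q|_n$, dual lattice $2\pi q^{-1}\mathbb{Z}^n$), $\sum_{z\in\mathbb{Z}^n}\delta_{qz}=\frac{1}{|Q|_n}\sum_{z\in\mathbb{Z}^n}E_{2\pi iq^{-1}z}$, and multiply both sides by the bounded smooth multiplier $E_{i\eta}$ (noting $E_{i\eta}\delta_{qz}=e^{iqz\cdot\eta}\delta_{qz}$) to obtain $\sum_{z\in\mathbb{Z}^n}\delta_{qz}e^{iqz\cdot\eta}=\frac{1}{|Q|_n}\sum_{z\in\mathbb{Z}^n}E_{i\xi_z}$; subtracting the finitely many terms indexed by $Z_{q,\eta}(k)$ yields exactly the claimed identity for $(\Delta+k^2)G^k_{q,\eta}$.

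For real analyticity, I would restrict that identity to the open set $\mathbb{R}^n\setminus q\mathbb{Z}^n$, where the Dirac masses vanish and $(\Delta+k^2)G^k_{q,\eta}$ equals $-\frac{1}{|Q|_n}\sum_{z\in Z_{q,\eta}(k)}E_{i\xi_z}$, a finite sum of real analytic functions; since $\Delta+k^2$ is elliptic with constant (hence real analytic) coefficients it is analytically hypoelliptic, so $G^k_{q,\eta}$ is real analytic on $\mathbb{R}^n\setminus q\mathbb{Z}^n$. For $G^k_{q,\eta}\in L^1_{\mathrm{loc}}(\mathbb{R}^n)$, I would compare with a classical fundamental solution $\Phi_k$ of $\Delta+k^2$: on a ball $B$ containing no lattice point but the origin, $(\Delta+k^2)(G^k_{q,\eta}-\Phi_k)$ equals the restriction of $-\frac{1}{|Q|_n}\sum_{z\in Z_{q,\eta}(k)}E_{i\xi_z}$ (the mass $\delta_0$ cancels), so $G^k_{q,\eta}-\Phi_k$ is real analytic on $B$ by the same hypoellipticity; since $\Phi_k\in L^1_{\mathrm{loc}}$, the same holds for $G^k_{q,\eta}$ on $B$, and $(Q,\eta)$-quasi-periodicity (the multiplier $E_{i\eta}$ being smooth and nonvanishing) propagates this to a neighbourhood of every point of $q\mathbb{Z}^n$, while elsewhere $G^k_{q,\eta}$ is analytic, hence locally bounded.

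The point requiring the most care — the ``main obstacle'' — is the passage from the formal manipulations to rigorous statements in $\mathcal{S}'(\mathbb{R}^n)$: showing that the symmetric partial sums genuinely converge there, that $\Delta+k^2$ commutes with that limit, and that the quasi-periodic Poisson summation identity holds as an equality of tempered distributions rather than merely formally. Once this is in place everything else is routine, but I would in particular double-check the low dimension $n=2$, where $\sum_z|z|^{-2}$ diverges and the series has no pointwise meaning, to make sure every step used only the $\mathcal{S}'$-convergence and the local comparison with $\Phi_k$, both of which remain valid there.
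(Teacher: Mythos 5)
Your proposal is correct and follows essentially the same approach as the reference proof (the paper cites [BrDaLuMu24, Props.~2.4 and 2.5], which in turn adapts the argument of [DaLaMu21, Thms.~12.2--12.3] used for $S_{q,n}$): test the symmetric partial sums against Schwartz functions using the quadratic growth of $|k^2-|2\pi q^{-1}z+\eta|^2|$ to get convergence in $\mathcal{S}'$, apply $\Delta+k^2$ termwise and invoke Poisson summation for the lattice $q\mathbb{Z}^n$ multiplied by the $\mathcal{O}_M$-multiplier $E_{i\eta}$, then use analytic hypoellipticity of $\Delta+k^2$ together with a local comparison to a classical fundamental solution to obtain real analyticity away from $q\mathbb{Z}^n$ and $L^1_{\mathrm{loc}}$ regularity. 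Two small remarks on wording only: for the finiteness of $Z_{q,\eta}(k)$ it is cleaner to observe that $2\pi q^{-1}\mathbb{Z}^n+\eta$ is a shifted lattice, hence uniformly discrete and closed (mere topological discreteness does not by itself give local finiteness); and in the $L^1_{\mathrm{loc}}$ step you should note that any fundamental solution $\Phi_k$ of $\Delta+k^2$ has the same $O(|x|^{2-n})$ (or $\log|x|$ when $n=2$) singularity as the Laplacian's, so that $\Phi_k\in L^1_{\mathrm{loc}}$, which is what makes the comparison argument close. Your concern about $n=2$ is rightly flagged but already addressed by the fact that every step is carried out in $\mathcal{S}'$ rather than pointwise.
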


As we have done for the Laplace equation, we define quasi-periodic layer potentials for the Helmholtz equation. Let $\alpha \in \mathopen]0,1[$ and $\Omega_Q$ be as in \eqref{OmegaQ_def}. Let  $\eta \in \mathbb{R}^n$ and $k \in \mathbb{C}$.    
 We now introduce   $(Q,\eta)$-quasi-periodic layer potentials for the Helmholtz equation.
We start with the double layer potential: if $\mu \in C^0(\partial\Omega_Q)$, then the $(Q,\eta)$-quasi-periodic double-layer potential for the Helmholtz equation is 
\[
\mathcal{D}^{k}_{q,\eta}[\partial\Omega_Q,\mu](x) := -\int_{\partial \Omega_Q}\nu_{\Omega_Q}(y) \cdot\nabla G^{k}_{q,\eta}(x-y)\mu(y)\,d\sigma_y \qquad \forall x \in \mathbb{R}^n.
\]
 Moreover, we set
\[
\mathcal{K}^{k}_{q,\eta}[\partial\Omega_Q,\mu] := \mathcal{D}^{k}_{q,\eta}[\partial\Omega_Q,\mu]_{|\partial\Omega_Q} \qquad \mbox{ on } \partial\Omega_Q.
\]

We collect some properties of $\mathcal{D}^{k}_{q,\eta}[\partial\Omega_Q,\mu]$ in the following theorem (for a proof see  \cite[Prop.~3.1]{BrDaLuMu24}).

\begin{theorem}\label{prop:dlphelm}
Let $\alpha \in \mathopen]0,1[$ and $\Omega_Q$ be as in \eqref{OmegaQ_def}. Let $\eta \in \mathbb{R}^n$ and $k \in \mathbb{C}$. Then the following statements hold.
\begin{itemize}
\item[(i)] If  $\mu \in C^{0}(\partial \Omega_Q)$, then the function $\mathcal{D}^{k}_{q,\eta}[\partial\Omega_Q,\mu]$ is $(Q,\eta)$-quasi-periodic,  $\mathcal{D}^{k}_{q,\eta}[\partial\Omega_Q,\mu]$ is of class $C^\infty(\mathbb{R}^n\setminus \partial \mathbb{S}_q[\Omega_Q])$ and
\begin{align*}
(\Delta&+k^2)\mathcal{D}^{k}_{q,\eta}[\partial\Omega_Q,\mu](x) \\
&= \frac{1}{|Q|_n}\sum_{z \in Z_{q,\eta}(k)}e^{ix\cdot (2\pi  q^{-1}z+\eta)}i(2\pi  q^{-1}z+\eta)\cdot \int_{\partial \Omega_Q}\nu_{\Omega_Q}(y)e^{-iy\cdot (2\pi  q^{-1}z+\eta)} \mu(y)\,d\sigma_y
\end{align*}
for all $x \in \mathbb{R}^n \setminus \partial \mathbb{S}_q[\Omega_Q]$.
\item[(ii)] If  $\mu \in C^{1,\alpha}(\partial \Omega_Q)$, then  the restriction $\mathcal{D}^{k}_{q,\eta}[\partial\Omega_Q,\mu]_{|\mathbb{S}_q[\Omega_Q]}$ can be extended to a continuous function $\mathcal{D}^{k,+}_{q,\eta}[\partial\Omega_Q,\mu] \in C^{1,\alpha}(\overline{\mathbb{S}_q[\Omega_Q]})$  and
\[
\mathcal{D}^{k,+}_{q,\eta}[\partial\Omega_Q,\mu] = \frac{1}{2}\mu+\mathcal{K}^{k}_{q,\eta}[\partial\Omega_Q,\mu]  \qquad \mbox{ on } \partial\Omega_Q\, .
\]
Moreover, the map from $C^{1,\alpha}(\partial \Omega_Q)$ to $ C^{1,\alpha}(\overline{\mathbb{S}_q[\Omega_Q]})$ 
that takes $\mu$ to $\mathcal{D}^{k,+}_{q,\eta}[\partial\Omega_Q,\mu]$ is linear and continuous.
\item[(iii)] If  $\mu \in C^{1,\alpha}(\partial \Omega_Q)$, then  the restriction $\mathcal{D}^{k}_{q,\eta}[\partial\Omega_Q,\mu]_{|\mathbb{S}_q[\Omega_Q]^-}$ can be extended to a continuous function $\mathcal{D}^{k,-}_{q,\eta}[\partial\Omega_Q,\mu] \in C^{1,\alpha}(\overline{\mathbb{S}_q[\Omega_Q]^-})$ and
\[
\mathcal{D}^{k,-}_{q,\eta}[\partial\Omega_Q,\mu] = -\frac{1}{2}\mu+\mathcal{K}^{k}_{q,\eta}[\partial\Omega_Q,\mu]  \qquad \mbox{ on } \partial\Omega_Q\, .
\]
Moreover, the map from $C^{1,\alpha}(\partial \Omega_Q)$ to $ C^{1,\alpha}(\overline{\mathbb{S}_q[\Omega_Q]^-})$ 
that takes $\mu$ to $\mathcal{D}^{k,-}_{q,\eta}[\partial\Omega_Q,\mu]$ is linear and continuous.
\item[(iv)] The map that takes $\mu \in C^{1,\alpha}(\partial\Omega_Q)$ to $\mathcal{K}^{k}_{q,\eta}[\partial\Omega_Q,\mu]$ is a compact operator from  $C^{1,\alpha}(\partial\Omega_Q)$ 
to itself.
\end{itemize}
\end{theorem}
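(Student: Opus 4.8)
The plan is to extract from $G^{k}_{q,\eta}$ its singular part, which near the origin agrees with a classical fundamental solution of the Helmholtz operator, and then to transfer to $\mathcal{D}^{k}_{q,\eta}$ the classical potential-theoretic facts, reading off the global features (quasi-periodicity, behaviour off the boundary, the equation) directly from Theorem \ref{qnSR}.

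First I would settle item (i). Since $G^{k}_{q,\eta}$ is real analytic on $\mathbb{R}^n\setminus q\mathbb{Z}^n$ and, for $x\notin\partial\mathbb{S}_q[\Omega_Q]$ and $y\in\partial\Omega_Q$, one has $x-y\notin q\mathbb{Z}^n$ (because the set of differences $x-y$ with $x,y\in\overline{\Omega_Q}$ is contained in $\prod_{j=1}^n\mathopen]-q_{jj},q_{jj}[$, which meets $q\mathbb{Z}^n$ only at $0$, while $x-y=0$ is impossible), the integrand $y\mapsto\nu_{\Omega_Q}(y)\cdot\nabla G^{k}_{q,\eta}(x-y)$ is smooth and depends real-analytically on $x$ locally; differentiating under the integral sign gives $\mathcal{D}^{k}_{q,\eta}[\partial\Omega_Q,\mu]\in C^\infty(\mathbb{R}^n\setminus\partial\mathbb{S}_q[\Omega_Q])$. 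The $(Q,\eta)$-quasi-periodicity follows by substituting $x+qe_h$ for $x$ and using $\nabla G^{k}_{q,\eta}(x+qe_h-y)=e^{i\eta\cdot qe_h}\,\nabla G^{k}_{q,\eta}(x-y)$, a consequence of the $(Q,\eta)$-quasi-periodicity of $G^{k}_{q,\eta}$. Finally, applying $\Delta_x+k^2$ under the integral sign and using that, for $x-y\notin q\mathbb{Z}^n$, Theorem \ref{qnSR} gives $(\Delta+k^2)G^{k}_{q,\eta}(x-y)=-\sum_{z\in Z_{q,\eta}(k)}|Q|_n^{-1}e^{i(2\pi q^{-1}z+\eta)\cdot(x-y)}$, a short computation produces exactly the stated formula for $(\Delta+k^2)\mathcal{D}^{k}_{q,\eta}[\partial\Omega_Q,\mu]$.

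For items (ii)--(iv) I would reduce to the classical Helmholtz layer potential theory on $C^{1,\alpha}$ domains. Pick a ball $B$ centred at $0$ with $\overline{B}\cap q\mathbb{Z}^n=\{0\}$ and containing all differences $x-y$ with $x,y\in\overline{\Omega_Q}$, and let $\Phi_k$ denote a fundamental solution of $\Delta+k^2$ in $\mathbb{R}^n$. By Theorem \ref{qnSR}, on $B$ one has $(\Delta+k^2)(G^{k}_{q,\eta}-\Phi_k)=-\sum_{z\in Z_{q,\eta}(k)}|Q|_n^{-1}E_{i(2\pi q^{-1}z+\eta)}$, which is real analytic; since $\Delta+k^2$ is elliptic with analytic coefficients, $R:=G^{k}_{q,\eta}-\Phi_k$ is real analytic on $B$ (in fact on $\mathbb{R}^n$ minus the nonzero lattice points). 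Hence, for $x$ in a bounded neighbourhood of $\overline{\Omega_Q}$,
\[
\mathcal{D}^{k}_{q,\eta}[\partial\Omega_Q,\mu](x)=-\int_{\partial\Omega_Q}\nu_{\Omega_Q}(y)\cdot\nabla\Phi_k(x-y)\mu(y)\,d\sigma_y-\int_{\partial\Omega_Q}\nu_{\Omega_Q}(y)\cdot\nabla R(x-y)\mu(y)\,d\sigma_y ,
\]
so that $\mathcal{D}^{k}_{q,\eta}$ is the classical Helmholtz double-layer potential plus an integral operator with real analytic kernel; the latter is harmless (real analytic in $x$, linear and continuous in $\mu\in C^0(\partial\Omega_Q)$, and, restricted to $\partial\Omega_Q$, compact on $C^{1,\alpha}(\partial\Omega_Q)$). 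From the classical jump relations and mapping properties of the Helmholtz double-layer potential -- which in turn reduce to the Laplace case in Theorem \ref{perbvp.dbperpot}, $\Phi_k-\Phi_0$ being less singular at $0$ than $\Phi_0$, or may be quoted from the standard references -- one obtains the two one-sided $C^{1,\alpha}$ extensions, the jump formulas $\pm\tfrac12\mu+\mathcal{K}^{k}_{q,\eta}[\partial\Omega_Q,\mu]$, the continuity of the extension operators, and the compactness of $\mathcal{K}^{k}_{q,\eta}$ on $C^{1,\alpha}(\partial\Omega_Q)$. The passage from the single reference cell to the full periodic array is then immediate from the $(Q,\eta)$-quasi-periodicity proved in (i) together with $\overline{\Omega_Q}\subseteq Q$: near $\partial\mathbb{S}_q[\Omega_Q]$ inside a translated cell $qz+Q$ the potential is the translate, up to the unimodular factor $e^{i\eta\cdot qz}$, of its behaviour near $\partial\Omega_Q$.

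The hard part is not the reduction but the classical input it rests on: on a domain that is only of class $C^{1,\alpha}$, the one-sided traces of the double-layer potential extend to $C^{1,\alpha}$ functions up to the boundary and, above all, $\mathcal{K}^{k}_{q,\eta}$ is \emph{compact} (not merely bounded) on $C^{1,\alpha}(\partial\Omega_Q)$. Establishing this needs a careful analysis of the double-layer kernel -- expressing the tangential derivatives of $\mathcal{K}^{k}_{q,\eta}[\partial\Omega_Q,\mu]$ through $\mu$ modulo operators with weakly singular (integrable) kernels -- and is the only genuinely delicate point. By contrast, the quasi-periodicity, the $C^\infty$-regularity off $\partial\mathbb{S}_q[\Omega_Q]$, and the explicit form of $(\Delta+k^2)\mathcal{D}^{k}_{q,\eta}$ are straightforward consequences of Theorem \ref{qnSR}.
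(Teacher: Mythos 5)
The paper itself offers no proof of this theorem, pointing instead to \cite[Prop.~3.1]{BrDaLuMu24}; your strategy is nonetheless the same one used there and throughout the related literature (compare the proofs referenced for Theorems~\ref{perbvp.dbperpot} and~\ref{perbvp.sperpot}): split $G^{k}_{q,\eta}$ near the origin as a classical fundamental solution $\Phi_k$ of $\Delta+k^2$ plus a real analytic remainder $R$, deduce the global claims from the quasi-periodicity and local regularity of $G^{k}_{q,\eta}$, and inherit the one-sided $C^{1,\alpha}$ extensions, the jump relations, and the compactness of $\mathcal{K}^{k}_{q,\eta}$ from the classical Helmholtz double-layer potential, the $R$-part contributing only a smooth, hence compact, perturbation. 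Your computation of $(\Delta+k^2)\mathcal{D}^{k}_{q,\eta}$ in item (i) is correct and reproduces the stated formula.

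One small inaccuracy worth fixing: to show $x-y\notin q\mathbb{Z}^n$ for \emph{all} $x\in\mathbb{R}^n\setminus\partial\mathbb{S}_q[\Omega_Q]$ and $y\in\partial\Omega_Q$, the containment of $\overline{\Omega_Q}-\overline{\Omega_Q}$ in $\prod_j\mathopen]-q_{jj},q_{jj}\mathclose[$ only covers $x$ near the reference cell. The clean argument is: if $x-y=qz$ then $x\in qz+\partial\Omega_Q\subseteq\partial\mathbb{S}_q[\Omega_Q]$, contradicting the hypothesis on $x$. Also, as you yourself flag, the $C^{1,\alpha}\to C^{1,\alpha}$ regularity and the compactness of the classical Helmholtz boundary operator on a merely $C^{1,\alpha}$ boundary are the genuinely substantive inputs; your proof quotes them rather than proves them, which is acceptable for this reduction but should be made explicit with a precise reference (e.g., to the corresponding classical statement in \cite{DaLaMu21}).
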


Let $\alpha \in \mathopen]0,1[$ and $\Omega_Q$ be as in \eqref{OmegaQ_def}. Let  $\eta \in \mathbb{R}^n$ and $k \in \mathbb{C}$.   Then, for $\mu \in C^0(\partial\Omega_Q)$, we introduce the $(Q,\eta)$-quasi-periodic single-layer potential for the Helmholtz equation: 
\[
\mathcal{S}^{k}_{q,\eta}[\partial\Omega_Q,\mu](x) := \int_{\partial \Omega_Q}  G^{k}_{q,\eta}(x-y)\mu(y)\,d\sigma_y \qquad \forall x \in \mathbb{R}^n.
\]
Moreover, we set 
\[
\left(\mathcal{K}^{k}_{q,\eta}\right)^*[\partial\Omega_Q,\mu](x) := \int_{\partial \Omega_Q}\nu_{\Omega_Q}(x) \cdot\nabla G^{k}_{q,\eta}(x-y)\mu(y)\,d\sigma_y \qquad \forall x \in \partial\Omega_Q.
\]

In the following theorem, we collect some properties of $\mathcal{S}^{k}_{q,\eta}[\partial\Omega_Q,\mu]$ (see \cite[Prop.~3.2]{BrDaLuMu24} for a proof).

\begin{theorem}\label{prop:slphelm}
Let $\alpha \in \mathopen]0,1[$ and $\Omega_Q$ be as in \eqref{OmegaQ_def}. Let $\eta \in \mathbb{R}^n$ and $k \in \mathbb{C}$. Let $\Omega_Q$ be a bounded open subset of $\mathbb{R}^n$ of class $C^{1,\alpha}$  such that $\overline{\Omega_Q} \subseteq Q$.   Then the following statements hold.
\begin{itemize}
\item[(i)]  If  $\mu \in {C^{0}(\partial \Omega_Q)}$, then the function $\mathcal{S}^{k}_{q,\eta}[\partial\Omega_Q,\mu]$  is  {$(Q,\eta)$-quasi-periodic}, is continuous in $\mathbb{R}^n$, is of class $C^\infty(\mathbb{R}^n\setminus \partial \mathbb{S}_q[\Omega_Q])$, and
\begin{align*}
(\Delta&+k^2)\mathcal{S}^{k}_{q,\eta}[\partial\Omega_Q,\mu](x) \\
&= -\frac{1}{|Q|_n}\sum_{z \in Z_{q,\eta}(k)}e^{ix\cdot (2\pi  q^{-1}z+\eta)} \int_{\partial \Omega_Q}e^{-iy\cdot (2\pi  q^{-1}z+\eta)} \mu(y)\,d\sigma_y\\
\end{align*}
for all $x \in \mathbb{R}^n\setminus \partial \mathbb{S}_q[\Omega_Q]$.
\item[(ii)] If  $\mu \in C^{0,\alpha}(\partial \Omega_Q)$, then the function $\mathcal{S}^{k,+}_{q,\eta}[\partial\Omega_Q,\mu] := \mathcal{S}^{k}_{q,\eta}[\partial\Omega_Q,\mu]_{|\overline{\mathbb{S}_q[\Omega_Q]}}$ belongs to $C^{1,\alpha}(\overline{\mathbb{S}_q[\Omega_Q]})$ and
\[
\nu_{\Omega_Q}(x) \cdot \nabla\mathcal{S}^{k,+}_{q,\eta}[\partial\Omega_Q,\mu](x) = -\frac{1}{2}\mu(x)+\left(\mathcal{K}^{k}_{q,\eta}\right)^*[\partial\Omega_Q,\mu](x) \qquad \forall x \in \partial\Omega_Q.
\]
Moreover, the map from $C^{0,\alpha}(\partial \Omega_Q)$ to $ C^{1,\alpha}(\overline{\mathbb{S}_q[\Omega_Q]})$ 
that takes $\mu$ to $\mathcal{S}^{k,+}_{q,\eta}[\partial\Omega_Q,\mu]$ is linear and continuous.
\item[(iii)] If  $\mu \in C^{0,\alpha}(\partial \Omega_Q)$, then the function $\mathcal{S}^{k,-}_{q,\eta}[\partial\Omega_Q,\mu] := \mathcal{S}^{k}_{q,\eta}[\partial\Omega_Q,\mu]_{|\overline{\mathbb{S}_q[\Omega_Q]^-}}$ belongs to $C^{1,\alpha}(\overline{\mathbb{S}_q[\Omega_Q]^-})$ and
\[
\nu_{\Omega_Q}(x) \cdot \nabla\mathcal{S}^{k,-}_{q,\eta}[\partial\Omega_Q,\mu](x) = \frac{1}{2}\mu(x)+\left(\mathcal{K}^{k}_{q,\eta}\right)^*[\partial\Omega_Q,\mu](x) \qquad \forall x \in \partial\Omega_Q.
\]
Moreover, the map from $C^{0,\alpha}(\partial \Omega_Q)$ to $ C^{1,\alpha}(\overline{\mathbb{S}_q[\Omega_Q]^-})$ 
that takes $\mu$ to $\mathcal{S}^{k,-}_{q,\eta}[\partial\Omega_Q,\mu]$ is linear and continuous.
\item[(iv)] The map that takes $\mu \in C^{0,\alpha}(\partial\Omega_Q)$ to $\left(\mathcal{K}^{k}_{q,\eta}\right)^*[\partial\Omega_Q,\mu]$ is a compact operator from  $C^{0,\alpha}(\partial\Omega_Q)$ to itself.
\end{itemize}
\end{theorem}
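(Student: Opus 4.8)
The plan is to reduce every assertion to the corresponding classical (non-periodic) statement for the Helmholtz single-layer potential, and then to control the passage to the quasi-periodic setting — and to the unbounded exterior domain — by exploiting the quasi-periodicity of $G^{k}_{q,\eta}$. Let $\Phi_{k}$ denote a classical fundamental solution of $\Delta+k^{2}$ in $\mathbb{R}^{n}$; it is real analytic in $\mathbb{R}^{n}\setminus\{0\}$ and lies in $L^{1}_{\mathrm{loc}}(\mathbb{R}^{n})$. The first point would be the splitting $G^{k}_{q,\eta}=\Phi_{k}+R^{k}_{q,\eta}$: choosing $\rho>0$ with $\rho<\min_{j}q_{jj}$ so that $B(0,\rho)$ meets $q\mathbb{Z}^{n}$ only at $0$, and subtracting $(\Delta+k^{2})\Phi_{k}=\delta_{0}$ from the distributional identity of Theorem \ref{qnSR} restricted to $B(0,\rho)$, one obtains that $(\Delta+k^{2})(G^{k}_{q,\eta}-\Phi_{k})$ equals $-|Q|_{n}^{-1}\sum_{z\in Z_{q,\eta}(k)}E_{i(2\pi q^{-1}z+\eta)}$ in $B(0,\rho)$, which is real analytic; since $\Delta+k^{2}$ is analytically hypoelliptic and $G^{k}_{q,\eta}-\Phi_{k}\in L^{1}_{\mathrm{loc}}$, the remainder $R^{k}_{q,\eta}:=G^{k}_{q,\eta}-\Phi_{k}$ is real analytic in $B(0,\rho)$, hence (using also Theorem \ref{qnSR}) on $\mathbb{R}^{n}\setminus(q\mathbb{Z}^{n}\setminus\{0\})$. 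Since $\overline{\Omega_{Q}}\subseteq Q$, for $x$ in a neighbourhood of $\overline{Q}$ and $y\in\partial\Omega_{Q}$ the difference $x-y$ stays in $\{w\in\mathbb{R}^{n}:|w_{j}|<q_{jj}\ \forall j\}$, a set disjoint from $q\mathbb{Z}^{n}\setminus\{0\}$; thus there $G^{k}_{q,\eta}(x-y)=\Phi_{k}(x-y)+R^{k}_{q,\eta}(x-y)$ with $R^{k}_{q,\eta}(\,\cdot\,-\,\cdot\,)$ real analytic, which realizes $\mathcal{S}^{k}_{q,\eta}[\partial\Omega_{Q},\cdot\,]$ near $\partial\Omega_{Q}$ as the classical Helmholtz single-layer potential plus a smoothing integral operator.

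For (i): quasi-periodicity follows by a change of variable in the integral from the quasi-periodicity relation $G^{k}_{q,\eta}(x+q_{jj}e_{j})=e^{iq_{jj}\eta\cdot e_{j}}G^{k}_{q,\eta}(x)$; real analyticity off $\partial\mathbb{S}_{q}[\Omega_{Q}]$ follows by differentiating under the integral sign, the kernel being jointly real analytic there since $x-y\notin q\mathbb{Z}^{n}$ whenever $x\notin\partial\mathbb{S}_{q}[\Omega_{Q}]$ and $y\in\partial\Omega_{Q}$; continuity on all of $\mathbb{R}^{n}$ reduces, by quasi-periodicity, to continuity across $\partial\Omega_{Q}$, where the potential is the classical single-layer potential of $\Phi_{k}$ — continuous because $n\ge 2$ makes the singularity weakly integrable on the hypersurface — plus the real-analytic remainder. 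For the equation I would view $\mathcal{S}^{k}_{q,\eta}[\partial\Omega_{Q},\mu]$ as the convolution of $G^{k}_{q,\eta}$ with the compactly supported distribution $\mu\,d\sigma$ on $\partial\Omega_{Q}$ and apply Theorem \ref{qnSR}: the contribution of $\sum_{z}\delta_{qz}e^{iqz\cdot\eta}$ is supported on $\partial\mathbb{S}_{q}[\Omega_{Q}]$ and hence vanishes on its complement, while $E_{i(2\pi q^{-1}z+\eta)}\ast(\mu\,d\sigma)(x)=e^{ix\cdot (2\pi q^{-1}z+\eta)}\int_{\partial\Omega_{Q}}e^{-iy\cdot (2\pi q^{-1}z+\eta)}\mu(y)\,d\sigma_{y}$, producing exactly the stated correction term (an identity that holds pointwise on $\mathbb{R}^{n}\setminus\partial\mathbb{S}_{q}[\Omega_{Q}]$ because the potential is smooth there).

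For (ii) and (iii): near $\partial\Omega_{Q}$ I would invoke the classical jump relations for the single-layer potential of $\Phi_{k}$ — which themselves reduce to the Laplace case behind Theorem \ref{perbvp.sperpot} via the further splitting $\Phi_{k}=\Phi_{0}+(\text{real analytic near }0)$ — giving $C^{1,\alpha}$ restrictions up to $\partial\Omega_{Q}$ from both sides and normal-derivative traces $\mp\frac{1}{2}\mu+(\mathcal{K}^{\Phi_{k}})^{*}[\partial\Omega_{Q},\mu]$; the remainder contributes a function real analytic up to $\partial\Omega_{Q}$, with no jump and normal-derivative trace $\int_{\partial\Omega_{Q}}\nu_{\Omega_{Q}}(x)\cdot\nabla_{x}R^{k}_{q,\eta}(x-y)\mu(y)\,d\sigma_{y}$, and summing (using $G^{k}_{q,\eta}=\Phi_{k}+R^{k}_{q,\eta}$) yields the formulas with $(\mathcal{K}^{k}_{q,\eta})^{*}$. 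To pass from $\overline{\Omega_{Q}}$ to $\overline{\mathbb{S}_{q}[\Omega_{Q}]}$, and from a boundary neighbourhood in $\mathbb{S}_{q}[\Omega_{Q}]^{-}$ to all of $\overline{\mathbb{S}_{q}[\Omega_{Q}]^{-}}$, I would use quasi-periodicity: since $|e^{i\eta\cdot qz}|=1$ for $\eta\in\mathbb{R}^{n}$, translation by $qz$ preserves the Schauder norms on the copies of $\overline{\Omega_{Q}}$, and away from $\partial\mathbb{S}_{q}[\Omega_{Q}]$ the potential is smooth and, being quasi-periodic, bounded with all its derivatives. Linearity is immediate, and continuity of $\mu\mapsto\mathcal{S}^{k,\pm}_{q,\eta}[\partial\Omega_{Q},\mu]$ follows from the classical estimates for the single-layer potential of $\Phi_{k}$ together with boundedness of the smoothing operator. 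For (iv) I would write $(\mathcal{K}^{k}_{q,\eta})^{*}[\partial\Omega_{Q},\cdot\,]=(\mathcal{K}^{\Phi_{k}})^{*}[\partial\Omega_{Q},\cdot\,]+\mathcal{R}$ with $\mathcal{R}[\mu](x):=\int_{\partial\Omega_{Q}}\nu_{\Omega_{Q}}(x)\cdot\nabla_{x}R^{k}_{q,\eta}(x-y)\mu(y)\,d\sigma_{y}$; by the splitting above, $\mathcal{R}$ has a real-analytic kernel on the compact set $\partial\Omega_{Q}\times\partial\Omega_{Q}$ (the origin lies in the admissible range of $x-y$), so it maps bounded subsets of $C^{0,\alpha}(\partial\Omega_{Q})$ into bounded subsets of $C^{1,\alpha}(\partial\Omega_{Q})$ and is therefore compact, while $(\mathcal{K}^{\Phi_{k}})^{*}$ is compact on $C^{0,\alpha}(\partial\Omega_{Q})$ exactly as in the Laplace case of Theorem \ref{perbvp.sperpot}(iv), using that $\partial\Omega_{Q}\in C^{1,\alpha}$ renders the kernel $\nu_{\Omega_{Q}}(x)\cdot\nabla_{x}\Phi_{k}(x-y)$ weakly singular with a Hölder gain; a sum of compact operators is compact.

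The step I expect to be the main obstacle is the splitting of the first paragraph together with the bookkeeping it feeds into the rest: one must pin down a classical fundamental solution $\Phi_{k}$ of $\Delta+k^{2}$ for arbitrary complex $k$, deduce from analytic hypoellipticity the real-analytic splitting of $G^{k}_{q,\eta}$ across the origin, and then verify carefully that near each copy of $\partial\Omega_{Q}$ (by quasi-periodicity it suffices to treat one) the quasi-periodic layer potential coincides with the classical Helmholtz layer potential up to a smoothing operator, paying attention to the unbounded geometry of $\mathbb{S}_{q}[\Omega_{Q}]^{-}$. Once this is in place, the jump relations, the $C^{1,\alpha}$-regularity up to the boundary, and the compactness assertions all follow from their classical counterparts, already recorded for the Laplacian in Theorem \ref{perbvp.sperpot}.
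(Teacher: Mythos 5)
Your approach — split $G^{k}_{q,\eta}=\Phi_{k}+R^{k}_{q,\eta}$ into the free-space Helmholtz fundamental solution plus a real-analytic remainder (obtained via analytic hypoellipticity from the distributional identity in Theorem~\ref{qnSR}), then import the classical jump/regularity/compactness facts and treat the remainder as a smoothing perturbation, with quasi-periodicity and $|e^{i\eta\cdot qz}|=1$ controlling the passage to the full periodic set and the unbounded exterior — is exactly the strategy used in the cited reference \cite{BrDaLuMu24}, and it is correct.

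One small inaccuracy in part (iv): the kernel $\nu_{\Omega_Q}(x)\cdot\nabla_x R^{k}_{q,\eta}(x-y)$ is not real analytic in $x$ because the factor $\nu_{\Omega_Q}$ is only of class $C^{0,\alpha}$, so $\mathcal{R}$ does not map $C^{0,\alpha}(\partial\Omega_Q)$ into $C^{1,\alpha}(\partial\Omega_Q)$ as you claim. It does map $C^{0}(\partial\Omega_Q)$ boundedly into $C^{0,\alpha}(\partial\Omega_Q)$ (the integral is smooth in $x$, and multiplication by $\nu_{\Omega_Q}$ preserves $C^{0,\alpha}$), so compactness of $\mathcal{R}$ on $C^{0,\alpha}(\partial\Omega_Q)$ follows by factoring through the compact embedding $C^{0,\alpha}(\partial\Omega_Q)\hookrightarrow C^{0}(\partial\Omega_Q)$; the conclusion stands, only the intermediate target space needs adjusting.
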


{\begin{remark}
Theorems \ref{prop:dlphelm} (i) and \ref{prop:slphelm} (i) imply that if $Z_{q,\eta}(k)=\emptyset$, then both $\mathcal{D}^{k}_{q,\eta}[\partial\Omega_Q,\mu]$ and $\mathcal{S}^{k}_{q,\eta}[\partial\Omega_Q,\mu]$ satisfy the Helmholtz equation in $\mathbb{R}^n\setminus \partial \mathbb{S}_q[\Omega_Q]$.

\end{remark}}

\subsection{A singularly perturbed boundary value problem} \label{ss:singhelm}

In this section, we show an application of quasi-periodic layer potentials  to the study of the asymptotic behavior of the solution to a singularly perturbed boundary value problem for the Helmholtz equation in the framework of quasi-periodic functions.

For simplicity, we assume in this Section \ref{ss:singhelm} that
\[
n=3\, .
\]
In the general case, where $n \in \mathbb{N}\setminus \{0,1\}$, the asymptotic behavior of the solution strongly depends on the dimension $n$. For an illustration of this, we refer to \cite[\S 6]{BrDaLuMu24}, where a nonlinear Robin problem for the Helmholtz equation is analyzed. 

However, taking $n=3$ simplifies the computations and allows for a more direct and illustrative application of our technique.

 Our problem will be set in a periodic domain obtained by removing from $\mathbb{R}^3$ a periodic set of small holes. To define such a domain, we take $\alpha\in\mathopen]0,1[$  and a subset  $\Omega$  of $\mathbb{R}^3$ satisfying the following  assumption:
	\begin{equation}\label{Omega_def}
	\begin{split}
	&\Omega\,\,\mbox{is a bounded open connected subset of}\,\,\mathbb{R}^3\,\,\mbox{of class}\,C^{1,\alpha} \\
	&\mbox{such that}\   \mathbb{R}^3 \setminus{\overline{\Omega}}\,\,\mbox{is connected.}
	\end{split}
	\end{equation}
Let $p\in Q$. Then there exists $\epsilon_0\in\mathopen]0,+\infty[$ such that
	\begin{equation}\label{epsilon_0}
	  \Omega_{p,\epsilon}:=p+\epsilon\,{\overline{\Omega}}\subseteq Q \quad\forall\epsilon\in\mathopen]-\epsilon_0, \epsilon_0[\, .
	\end{equation}	
We will remove from $\mathbb{R}^3$ a set of periodic copies of $\Omega_{p,\epsilon}$. 
{From now on,  $\eta \in \mathbb{R}^3$ will be fixed}. To define the wave number for the Helmholtz equation we take {$k \in \mathbb{C}$  such that 
\[
k^2 \notin \left\{|2\pi q^{-1}z+\eta|^2 : z \in \mathbb{Z}^3\right\}\, ,
\]
i.e., such that $k^2$ does not belong to  the spectrum of the  Laplacian $-\Delta$ acting on $(Q,\eta)$-quasi-periodic functions {(see \cite[\S 4]{BrDaLuMu24})}. Let $g \in C^{1,\alpha}(\partial\Omega)$. Then, for $\epsilon\in\mathopen]0,\epsilon_{0}[$, we consider the following Dirichlet problem
\begin{equation}\label{bvpe}
\begin{cases}
\Delta u + k^2 u=0 \qquad &\mbox{ in } \mathbb{S}_q[\Omega_{p,\epsilon}]^-,\\
u \, \mbox{is $(Q,\eta)$-quasi-periodic},\\
u(x)=g((x-p)/\epsilon) \qquad &\forall x \in  \partial\Omega_{p,\epsilon}.
\end{cases}
\end{equation}

For $\epsilon$ positive and small enough, problem \eqref{bvpe} has a unique solution and such a solution can be represented in terms of a single-layer potential. We demonstrate this fact in the following proposition.

\begin{proposition}\label{prop:bij}
Let $\alpha \in \mathopen]0,1[$ and $\Omega$ be as in assumption \eqref{Omega_def}. Let $p \in Q$. Let $\epsilon_0$ be as in assumption \eqref{epsilon_0}. Let $\eta \in \mathbb{R}^3$ and let {$k \in \mathbb{C}$ be such} that $k^2 \notin \left\{|2\pi q^{-1}z+\eta|^2 : z \in \mathbb{Z}^3\right\}$.     Let $g \in C^{1,\alpha}(\partial\Omega)$. Then there exists $\epsilon^\#_0 \in  \mathopen]0,\epsilon_0[$ such that for each  $\epsilon \in \mathopen]0,\epsilon^\#_0[$,
problem \eqref{bvpe} has a unique solution $u_\epsilon \in C^{1,\alpha}(\overline{\mathbb{S}_q[\Omega_{p,\epsilon}]^-})$ and such solution is given by the function
 \[
u_\epsilon(x):= {\mathcal{S}^{k,-}_{q,\eta}}[\partial\Omega_{p,\epsilon},\epsilon^{-1}\theta_\epsilon((\cdot-p)/\epsilon)](x) \qquad \forall x \in \overline{\mathbb{S}_q[\Omega_{p,\epsilon}]^-}\, ,
 \]
where 
$\theta_\epsilon \in 
C^{0,\alpha}(\partial\Omega)$ is the unique solution of the equation
\begin{equation}\label{eq:bij}
\begin{split}
\epsilon^{-1}\mathcal{S}^{k}_{q,\eta} &[\partial\Omega_{p,\epsilon},\theta((\cdot-p)/\epsilon)](p+\epsilon t)=g(t) \qquad \forall t \in \partial \Omega\, .
\end{split}
\end{equation}
\end{proposition}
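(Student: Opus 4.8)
The plan is to reduce problem \eqref{bvpe} to a boundary integral equation and then rescale it onto the fixed domain $\partial\Omega$, so that the small parameter $\epsilon$ enters only through coefficients that depend real-analytically (or at least continuously) on $\epsilon$. First I would recall that, since $k^2$ lies outside the quasi-periodic spectrum of $-\Delta$, Theorems \ref{prop:slphelm} (i) and \ref{prop:slphelm} (iii) ensure that for each fixed $\epsilon \in \mathopen]0,\epsilon_0[$ the function $x \mapsto \mathcal{S}^{k,-}_{q,\eta}[\partial\Omega_{p,\epsilon},\psi](x)$ is $(Q,\eta)$-quasi-periodic, of class $C^{1,\alpha}$ on $\overline{\mathbb{S}_q[\Omega_{p,\epsilon}]^-}$, and solves the Helmholtz equation in $\mathbb{S}_q[\Omega_{p,\epsilon}]^-$ for every density $\psi \in C^{0,\alpha}(\partial\Omega_{p,\epsilon})$. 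Hence $u_\epsilon[\theta]$ as defined in the statement is always a $(Q,\eta)$-quasi-periodic solution of the first two lines of \eqref{bvpe}; imposing the Dirichlet datum on $\partial\Omega_{p,\epsilon}$ amounts precisely to requiring that the density solve
\[
\mathcal{S}^{k,-}_{q,\eta}[\partial\Omega_{p,\epsilon},\epsilon^{-1}\theta_\epsilon((\cdot-p)/\epsilon)](p+\epsilon t) = g(t) \qquad \forall t \in \partial\Omega ,
\]
which after multiplying through is exactly equation \eqref{eq:bij}. For uniqueness on the PDE side I would invoke the standard fact that the homogeneous quasi-periodic Dirichlet problem has only the trivial solution when $k^2$ is not an eigenvalue; this is the place where the spectral hypothesis on $k$ is used, and it reduces the whole proposition to solving \eqref{eq:bij} uniquely in $C^{0,\alpha}(\partial\Omega)$ for $\epsilon$ small.

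The central step is therefore to analyze the rescaled integral operator in \eqref{eq:bij}. Writing $y = p + \epsilon s$ in the integral defining $\mathcal{S}^{k}_{q,\eta}$ and using the change of variables on $\partial\Omega_{p,\epsilon}$ (which contributes a factor $\epsilon^{n-1} = \epsilon^2$ since $n=3$, together with the $\epsilon^{-1}$ prefactor), the left-hand side becomes
\[
\epsilon \int_{\partial\Omega} G^{k}_{q,\eta}(\epsilon(t-s))\,\theta(s)\,d\sigma_s .
\]
Now I would split $G^{k}_{q,\eta}$ into its singular part and a remainder. By Theorem \ref{qnSR}, $G^{k}_{q,\eta}$ is real analytic on $\mathbb{R}^n\setminus q\mathbb{Z}^n$ and, near the origin, differs from the classical fundamental solution of $\Delta + k^2$ in $\mathbb{R}^3$ — namely $-\frac{1}{4\pi|x|}$ up to a smooth term — by a real-analytic function; concretely one writes $G^{k}_{q,\eta}(x) = -\tfrac{1}{4\pi|x|} + R_{q,\eta}^k(x)$ with $R_{q,\eta}^k$ real analytic in a neighbourhood of $\overline{Q}$ (say on a ball containing $p + \epsilon_0(\overline\Omega - \overline\Omega)$). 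Substituting, the leading term $\epsilon \cdot (-\tfrac{1}{4\pi \epsilon |t-s|}) = -\tfrac{1}{4\pi|t-s|}$ is exactly $\epsilon$-independent and equals the classical single-layer kernel on $\partial\Omega$, while the remainder contributes $\epsilon\, R^k_{q,\eta}(\epsilon(t-s))$, which is jointly real analytic in $(\epsilon, t, s)$ near $\epsilon = 0$ and vanishes at $\epsilon = 0$. Thus \eqref{eq:bij} takes the form
\[
\Lambda(\epsilon, \theta) := \mathcal{S}_\Omega[\theta] + \epsilon\, N_\epsilon[\theta] = g ,
\]
where $\mathcal{S}_\Omega$ is the classical single-layer operator on $\partial\Omega$ for the Helmholtz equation (mapping $C^{0,\alpha}(\partial\Omega)$ to $C^{1,\alpha}(\partial\Omega)$) and $N_\epsilon$ is a family of operators depending real-analytically on $\epsilon$ in a neighbourhood of $0$.

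To conclude I would verify that $\Lambda(0,\cdot) = \mathcal{S}_\Omega : C^{0,\alpha}(\partial\Omega) \to C^{1,\alpha}(\partial\Omega)$ is a linear homeomorphism. For $n = 3$ this is classical: $\mathcal{S}_\Omega$ differs from the single-layer operator for the Laplace operator by a compact perturbation, the latter is invertible on these Schauder spaces because $3 \geq 3$ (so the Laplace single-layer operator has no kernel), and injectivity of $\mathcal{S}_\Omega$ itself follows from uniqueness for the interior and exterior Dirichlet problems for $\Delta + k^2$ together with the jump relations of Theorem \ref{prop:slphelm}, provided $k^2$ is not a Dirichlet eigenvalue of $-\Delta$ on $\Omega$ — which I would either add as an implicit consequence of the quasi-periodic spectral assumption after shrinking $\epsilon_0$, or handle by the standard Riesz–Fredholm argument showing $\mathcal{S}_\Omega$ is Fredholm of index zero and injective. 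Once $\Lambda(0,\cdot)$ is invertible, an application of the implicit function theorem for real-analytic maps between Banach spaces yields $\epsilon^\#_0 \in \mathopen]0,\epsilon_0[$ and a unique real-analytic branch $\epsilon \mapsto \theta_\epsilon \in C^{0,\alpha}(\partial\Omega)$ solving $\Lambda(\epsilon,\theta_\epsilon) = g$ for $\epsilon \in \mathopen]{-\epsilon^\#_0}, \epsilon^\#_0[$; restricting to positive $\epsilon$ and feeding $\theta_\epsilon$ back into $u_\epsilon[\theta]$ gives the asserted solution, and its uniqueness follows by reversing the reduction. The main obstacle is the bookkeeping in the rescaling and the clean separation of the periodic Green's function into its universal singular part plus a real-analytic remainder with the correct joint dependence on $(\epsilon, t, s)$; once that is in place, the invertibility of $\mathcal{S}_\Omega$ and the implicit function theorem do the rest.
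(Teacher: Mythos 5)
Your plan is a genuinely different one from the paper's. The paper proves this proposition almost entirely by citation: it first uses a Rauch--Taylor type argument (following \cite{BrDaLuMu24} and \cite{RaTa75}) to show that, after shrinking $\epsilon_0$, the number $k^2$ is not a $(Q,\eta)$-quasi-periodic Dirichlet eigenvalue of $-\Delta$ in the perforated domain $\mathbb{S}_q[\Omega_{p,\epsilon}]^-$, which gives existence and uniqueness of $u_\epsilon$ via \cite[Thm.~4.1]{BrDaLuMu24}; it then excludes, again via Rauch--Taylor, quasi-periodic Neumann eigenvalues in $\mathbb{S}_q[\Omega_{p,\epsilon}]^-$ and interior Dirichlet eigenvalues in $\Omega_{p,\epsilon}$, so that \cite[Cor.~4.5]{BrDaLuMu24} yields the unique single-layer density. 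Your rescaling-plus-implicit-function-theorem argument is in fact the machinery the paper deploys only in the \emph{subsequent} Proposition~\ref{prop:Theta}, whose goal is the analytic $\epsilon$-dependence of $\theta_\epsilon$, not the well-posedness statement of Proposition~\ref{prop:bij} itself.

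There is, however, a genuine gap. You dispose of uniqueness of $u_\epsilon$ by ``invoking the standard fact that the homogeneous quasi-periodic Dirichlet problem has only the trivial solution when $k^2$ is not an eigenvalue,'' and claim this is where the spectral hypothesis is used. But the hypothesis is that $k^2 \notin \{|2\pi q^{-1}z + \eta|^2 : z \in \mathbb{Z}^3\}$, i.e.\ $k^2$ avoids the spectrum of the \emph{free} quasi-periodic Laplacian on $\mathbb{R}^3$; what uniqueness for \eqref{bvpe} requires is that $k^2$ avoids the spectrum of the Dirichlet Laplacian in the \emph{perforated} domain $\mathbb{S}_q[\Omega_{p,\epsilon}]^-$. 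These are different operators with different spectra, and passing from the first to the second for small $\epsilon$ is exactly the nontrivial Rauch--Taylor convergence-of-spectra argument that the paper spends its opening sentences on. Your rescaled integral equation and the invertibility of $\mathcal{S}_\Omega$ give existence of a single-layer solution and uniqueness of the density, hence uniqueness of $u_\epsilon$ \emph{among single-layer representable solutions}; they do not rule out an extraneous $C^{1,\alpha}$ solution of \eqref{bvpe} that is not of single-layer form. That step is missing.

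Two secondary points worth flagging. First, the decomposition $G^k_{q,\eta}(x) = -\tfrac{1}{4\pi|x|} + R^k_{q,\eta}(x)$ with $R^k_{q,\eta}$ real analytic near the origin is not accurate: the remainder contains $-\tfrac{e^{ik|x|}-1}{4\pi|x|}$, whose derivatives involve $|x|$ and so is continuous but not analytic at $0$. What actually holds, and is what the paper's cited \cite[Prop.~5.1 and Cor.~5.2]{BrDaLuMu24} establish, is that the rescaled operator-valued map $\epsilon\mapsto M[\epsilon]$ is real analytic into $\mathcal{L}(C^{0,\alpha}(\partial\Omega),C^{1,\alpha}(\partial\Omega))$; for the present proposition continuity in $\epsilon$ suffices, so this is harmless here, but it matters when you move to Proposition~\ref{prop:Theta}. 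Second, the limit kernel $-\tfrac{1}{4\pi|t-s|}$ is the Laplace fundamental solution, not the Helmholtz one, so $\mathcal{S}_\Omega = \mathcal{S}[\partial\Omega,\cdot]$ at $\epsilon = 0$ is the Laplace single-layer operator (as the paper states and as \cite[Thm.~6.46]{DaLaMu21} covers), which is unconditionally invertible on these Schauder spaces in dimension $3$; the discussion of interior Dirichlet eigenvalues of $\Delta+k^2$ on $\Omega$ affecting the invertibility of $\mathcal{S}_\Omega$ is therefore a red herring.
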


\begin{proof}
We can verify that \( k^2 \) is not a \((Q, \eta)\)-quasi-periodic Dirichlet eigenvalue for \( -\Delta \) in \( \mathbb{S}_q[\Omega_{p,\epsilon}]^- \) for \( \epsilon \) sufficiently close to 0. This can be proven by leveraging the fact that \( k^2 \) does not belong to the spectrum of the Laplacian \( -\Delta \) acting on \((Q, \eta)\)-quasi-periodic functions in \( \mathbb{R}^n \), and by following the same argument as in \cite{BrDaLuMu24}, which is a straightforward modification of a proof by Rauch and Taylor in \cite{RaTa75}. Then, by \cite[Thm.~4.1]{BrDaLuMu24}, we know that there exists $\epsilon^\#_0 \in  \mathopen]0,\epsilon_0[$ such that problem \eqref{bvpe} has a unique solution $u_\epsilon \in C^{1,\alpha}(\overline{\mathbb{S}_q[\Omega_{p,\epsilon}]^-})$ for each  $\epsilon \in \mathopen]0,\epsilon^\#_0[$.

 Possibly shrinking $\epsilon^\#_0$, a similar argument, which is also  derived from Rauch and Taylor \cite{RaTa75} (see \cite{BrDaLuMu24}), proves that $k^2$ is not a $(Q,\eta)$-quasi-periodic Neumann eigenvalue for $-\Delta$ in $\mathbb{S}_q[\Omega_{p,\epsilon}]^-$ for $\epsilon \in \mathopen]-\epsilon^\#_0,\epsilon^\#_0[$. Also, for a possibly smaller $\epsilon^\#_0$, $k^2$ is not a Dirichlet eigenvalue of  $-\Delta$ in $\Omega_{p,\epsilon}$ for all $\epsilon \in ]0,\epsilon^\#_0[$. Then, for each  $\epsilon \in \mathopen]0,\epsilon^\#_0[$,  we can prove by  \cite[Cor.~4.5]{BrDaLuMu24} that there exists a unique $\theta_\epsilon \in 
C^{0,\alpha}(\partial\Omega)$ such that 
 \[
u_\epsilon(x)= {\mathcal{S}^{k,-}_{q,\eta}}[\partial\Omega_{p,\epsilon},\epsilon^{-1}\theta_\epsilon((\cdot-p)/\epsilon)](x) \qquad  \forall x \in \   \overline{\mathbb{S}_q[\Omega_{p,\epsilon}]^-}\, .
 \]
The function $\theta_\epsilon$ is the unique solution in $C^{0,\alpha}(\partial\Omega)$ of  equation \eqref{eq:bij}. 
\end{proof}

Thanks to Proposition \ref{prop:bij}, the problem of understanding the dependence of the solution to \eqref{bvpe} upon the parameter $\epsilon$ is reduced to that of understanding the dependence of the solution to the integral equation \eqref{eq:bij} upon  $\epsilon$.

We set 
\[
\mathcal{S}[\partial\Omega,\theta](t):= \int_{\partial \Omega}\frac{1}{-4\pi |t-s|}\theta(s)\,d\sigma_s \qquad \forall t \in \partial \Omega\, ,
\]
for all $\theta \in C^{0,\alpha}(\partial \Omega)$. {The operator $\mathcal{S}[\partial\Omega,\cdot]$ is the harmonic (non-periodic) single-layer potential operator and it is well} known  that $\mathcal{S}[\partial\Omega,\cdot]$ is a linear homeomorphism from $C^{0,\alpha}(\partial \Omega)$ to $C^{1,\alpha}(\partial \Omega)$ (see \cite[Thm.~6.46]{DaLaMu21}).  Then we observe that, possibly shrinking $\epsilon^\#_0$, there exists a real analytic map $M$ from $\mathopen]-\epsilon^\#_0,\epsilon^\#_0[$ to $\mathcal{L}(C^{0,\alpha}(\partial \Omega),C^{1,\alpha}(\partial \Omega))$ such that   equation \eqref{eq:bij} can be rewritten as
\begin{equation}\label{eq:bijbis}
\begin{split}
&\mathcal{S}[\partial\Omega,\theta]+ \epsilon M[\epsilon](\theta) -g=0 \qquad \mathrm{on}\ \partial \Omega
\end{split}
\end{equation} for all $\epsilon \in \mathopen]0,\epsilon^\#_0[$ (cf.~\cite[Prop.~5.1 and Cor.~5.2]{BrDaLuMu24}). 
We introduce the map $\Lambda$ from $$\mathopen]-\epsilon^\#_0,\epsilon^\#_0\mathclose[  \times C^{0,\alpha}(\partial \Omega)$$ to $C^{1,\alpha}(\partial \Omega)$ defined by
\[
\Lambda[\epsilon,\theta]:=\mathcal{S}[\partial\Omega,\theta]+ \epsilon M[\epsilon](\theta) -g \qquad \mathrm{on}\ \partial \Omega\, ,
\]
for all $(\epsilon,\theta)\in \mathopen]-\epsilon^\#_0,\epsilon^\#_0\mathclose[ \times C^{0,\alpha}(\partial \Omega)$ (cf.~\eqref{eq:bijbis}). Then, if $\epsilon \in \mathopen]0,\epsilon^\#_0[$,  we can rewrite equation \eqref{eq:bijbis}  as
\begin{equation}\label{eq:Lambdaeps}
\Lambda[\epsilon,\theta]=0\, .
\end{equation}
We now wish to understand the behavior of the solutions $\theta$ of equation \eqref{eq:Lambdaeps} when $\epsilon$ is close to the degenerate value $\epsilon=0$. We first note that, letting $\epsilon \to 0^+$ in equation \eqref{eq:Lambdaeps}, we obtain
\[
\Lambda[0,\theta]=0\, ,
\]
that is
\begin{equation}\label{eq:lim}
\begin{split}
&\mathcal{S}[\partial\Omega,\theta] = g \qquad \mathrm{on}\ \partial \Omega\, .
\end{split}
\end{equation}
By \cite[Thm.~6.46]{DaLaMu21}, equation \eqref{eq:lim} is well known to have a unique solution $\tilde{\theta}$  in $C^{0,\alpha}(\partial \Omega)$.

In the following proposition, we study the dependence of the solution $\theta$ of equation \eqref{eq:Lambdaeps} upon the parameter $\epsilon$ in a small neighborhood of the degenerate value $\epsilon=0$.

\begin{proposition}\label{prop:Theta}
Let $\alpha \in \mathopen]0,1[$ and $\Omega$ be as in assumption \eqref{Omega_def}. Let $p \in Q$. Let $\epsilon_0$ be as in assumption \eqref{epsilon_0}. Let $\eta \in \mathbb{R}^3$ and let {$k \in \mathbb{C}$ be such} that $k^2 \notin \left\{|2\pi q^{-1}z+\eta|^2 : z \in \mathbb{Z}^3\right\}$.      Let $g \in C^{1,\alpha}(\partial\Omega)$. Let $\epsilon^\#_0 $ be as in Proposition \ref{prop:bij}. Let $\tilde{\theta}$ be the unique solution in $C^{0,\alpha}(\partial \Omega)$ of equation \eqref{eq:lim}.  Let $\theta_\epsilon$ be as in Proposition \ref{prop:bij}. Then there exist $\epsilon' \in \mathopen]0,\epsilon^\#_0[$, an open neighborhood $\mathcal{O}_{\tilde{\theta}}$ of $\tilde{\theta}$ in $C^{0,\alpha}(\partial \Omega)$, and a real analytic  map $\Theta$ from $\mathopen]-\epsilon',\epsilon'\mathclose[$ to $\mathcal{O}_{\tilde{\theta}}$ such that  the set of zeros of $\Lambda$ in $\mathopen]-\epsilon',\epsilon'\mathclose[\times \mathcal{O}_{\tilde{\theta}}$ coincides with the graph of $\Theta$. In particular,  
\[
\Theta[0]=\tilde{\theta}\, , \qquad \Theta[\epsilon]=\theta_\epsilon \qquad \forall \epsilon \in \mathopen]0,\epsilon'\mathclose[\, .
\]
\end{proposition}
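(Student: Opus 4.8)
The plan is to deduce Proposition \ref{prop:Theta} from the Implicit Function Theorem for real analytic maps between Banach spaces (see \cite{DaLaMu21}), applied to the map $\Lambda$ at the point $(0,\tilde{\theta})$. The first step is to check that $\Lambda$ is real analytic from $\mathopen]-\epsilon^\#_0,\epsilon^\#_0\mathclose[\times C^{0,\alpha}(\partial\Omega)$ to $C^{1,\alpha}(\partial\Omega)$. This is immediate from the structure of $\Lambda$: the term $\mathcal{S}[\partial\Omega,\theta]$ depends linearly and continuously on $\theta$; the subtraction of the fixed element $g$ is an affine translation; and the term $\epsilon M[\epsilon](\theta)$ is real analytic because $M$ is real analytic by construction (cf.\ the discussion preceding \eqref{eq:bijbis} and \cite[Prop.~5.1 and Cor.~5.2]{BrDaLuMu24}), because the evaluation map $(L,\theta)\mapsto L(\theta)$ from $\mathcal{L}(C^{0,\alpha}(\partial\Omega),C^{1,\alpha}(\partial\Omega))\times C^{0,\alpha}(\partial\Omega)$ to $C^{1,\alpha}(\partial\Omega)$ is bilinear and continuous, and because multiplication by the scalar $\epsilon$ preserves real analyticity.

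Next I would verify the two hypotheses of the theorem at $(0,\tilde{\theta})$. The identity $\Lambda[0,\tilde{\theta}]=0$ is precisely equation \eqref{eq:lim}, the defining property of $\tilde{\theta}$. For the second hypothesis, I compute the partial differential $\partial_\theta\Lambda[0,\tilde{\theta}]$: since the map $\theta\mapsto\Lambda[0,\theta]=\mathcal{S}[\partial\Omega,\theta]-g$ is affine, its differential at $\tilde{\theta}$ equals the linear operator $\mathcal{S}[\partial\Omega,\cdot]$, which by \cite[Thm.~6.46]{DaLaMu21} is a linear homeomorphism from $C^{0,\alpha}(\partial\Omega)$ onto $C^{1,\alpha}(\partial\Omega)$. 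Hence $\partial_\theta\Lambda[0,\tilde{\theta}]$ is invertible with continuous inverse, and the hypotheses of the analytic Implicit Function Theorem are met.

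The theorem then furnishes $\epsilon'\in\mathopen]0,\epsilon^\#_0\mathclose[$, an open neighborhood $\mathcal{O}_{\tilde{\theta}}$ of $\tilde{\theta}$ in $C^{0,\alpha}(\partial\Omega)$, and a real analytic map $\Theta$ from $\mathopen]-\epsilon',\epsilon'\mathclose[$ to $\mathcal{O}_{\tilde{\theta}}$ whose graph coincides with the zero set of $\Lambda$ in $\mathopen]-\epsilon',\epsilon'\mathclose[\times\mathcal{O}_{\tilde{\theta}}$; in particular $\Theta[0]=\tilde{\theta}$. To finish, I would identify $\Theta[\epsilon]$ with $\theta_\epsilon$ for $\epsilon\in\mathopen]0,\epsilon'\mathclose[$: by Proposition \ref{prop:bij}, for such $\epsilon$ the function $\theta_\epsilon$ is the unique solution in the whole space $C^{0,\alpha}(\partial\Omega)$ of equation \eqref{eq:bij}, equivalently \eqref{eq:bijbis}, i.e.\ of $\Lambda[\epsilon,\cdot]=0$; since $\Theta[\epsilon]\in\mathcal{O}_{\tilde{\theta}}\subseteq C^{0,\alpha}(\partial\Omega)$ also satisfies $\Lambda[\epsilon,\Theta[\epsilon]]=0$, the global uniqueness forces $\Theta[\epsilon]=\theta_\epsilon$. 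The proof is therefore an orchestrated application of the analytic Implicit Function Theorem, with all the substantive analytic input — the homeomorphism property of $\mathcal{S}[\partial\Omega,\cdot]$ and the real analyticity of $M$ — already available from the preceding discussion. The only point requiring genuine attention is that the Implicit Function Theorem by itself yields a merely \emph{local} uniqueness statement (the graph of $\Theta$ exhausts the zero set inside $\mathcal{O}_{\tilde{\theta}}$), so one must additionally invoke the global uniqueness of $\theta_\epsilon$ from Proposition \ref{prop:bij} to conclude $\Theta[\epsilon]=\theta_\epsilon$ rather than only that $\Theta[\epsilon]$ is \emph{a} solution.
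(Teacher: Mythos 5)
Your proposal is correct and follows essentially the same route as the paper: real analyticity of $\Lambda$ via \cite[Prop.~5.1 and Cor.~5.2]{BrDaLuMu24}, computation of $d_\theta\Lambda[0,\tilde\theta]=\mathcal{S}[\partial\Omega,\cdot]$ as a linear homeomorphism by \cite[Thm.~6.46]{DaLaMu21}, and then the analytic Implicit Function Theorem. Your closing remark — that the IFT only gives local uniqueness of the graph and one must combine it with the global uniqueness of $\theta_\epsilon$ from Proposition \ref{prop:bij} to actually conclude $\Theta[\epsilon]=\theta_\epsilon$ — is a valid clarification of a point the paper leaves implicit.
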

\begin{proof}
We wish to apply  the Implicit Function Theorem for real analytic maps in Banach spaces. Possibly shrinking $\epsilon^\#_0$ as in the discussion after the proof of Proposition \ref{prop:bij} and using \cite[Prop.~5.1 and Cor.~5.2]{BrDaLuMu24}, we can demonstrate that $\Lambda$ is a real analytic map from $\mathopen]-\epsilon^\#_0,\epsilon^\#_0\mathclose[ \times C^{0,\alpha}(\partial \Omega)$ to $C^{1,\alpha}(\partial \Omega)$. By standard calculus in Banach spaces,  the partial differential $d_\theta \Lambda [0,\tilde{\theta}]$ of $\Lambda$ at $(0,\tilde{\theta})$ with respect to $\theta$ is given by the linear map
\[
d_\theta \Lambda [0,\tilde{\theta}]=\mathcal{S}[\partial\Omega,\cdot]\, .
\]
By \cite[Thm.~6.46]{DaLaMu21}, $d_\theta \Lambda [0,\tilde{\theta}]$ is a linear homeomorphism from $C^{0,\alpha}(\partial \Omega)$ to $C^{1,\alpha}(\partial \Omega)$ and, therefore,  the Implicit Function Theorem for real analytic maps in Banach spaces (cf.~Deimling \cite[Thm.~15.3]{De85}) implies the existence of  $\epsilon' \in \mathopen]0,\epsilon^\#_0[$, an open neighborhood $\mathcal{O}_{\tilde{\theta}}$ of $\tilde{\theta}$ in $C^{0,\alpha}(\partial \Omega)$, and a real analytic   $\Theta$ from $\mathopen]-\epsilon',\epsilon'\mathclose[$ to $\mathcal{O}_{\tilde{\theta}}$ such that {the} set of zeros of $\Lambda$ in $\mathopen]-\epsilon',\epsilon'\mathclose[ \times \mathcal{O}_{\tilde{\theta}}$ coincides with the graph of $\Theta$. In particular,  $\Theta[0]=\tilde{\theta}$ and  $\Theta[\epsilon]=\theta_\epsilon$ for all $\epsilon \in \mathopen]0,\epsilon'\mathclose[$.
\end{proof}

We can now combine the representation formula for the solution $u_\epsilon$ as a single-layer potential with density $\epsilon^{-1}\Theta[\epsilon]$ and the regularity result of the map $\epsilon \mapsto \Theta[\epsilon]$ obtained in Proposition \ref{prop:Theta} to prove the following theorem on the real analytic continuation of the map $\epsilon \mapsto {u_\epsilon}$ for negative values of $\epsilon$.

\begin{theorem}\label{thm:rep}
Let $\alpha \in \mathopen]0,1[$ and $\Omega$ be as in assumption \eqref{Omega_def}. Let $p \in Q$. Let $\epsilon_0$ be as in assumption \eqref{epsilon_0}. Let $\eta \in \mathbb{R}^3$ and let {$k \in \mathbb{C}$ be such} that $k^2 \notin \left\{|2\pi q^{-1}z+\eta|^2 : z \in \mathbb{Z}^3\right\}$.     Let $g \in C^{1,\alpha}(\partial\Omega)$. Let $\epsilon^\#_0 $ be as in Proposition \ref{prop:bij}. Let $\tilde{\theta}$ be the unique solution in $C^{0,\alpha}(\partial \Omega)$ of equation \eqref{eq:lim}.  Let $\epsilon' \in \mathopen]0,\epsilon^\#_0[$ be as in Proposition \ref{prop:Theta}. Let $V$ be a bounded open subset of $\mathbb{R}^3$ such that $\overline{V} \cap (p+q\mathbb{Z}^3)=\emptyset$. Then, there exist $\epsilon''  \in \mathopen]0,\epsilon']$ and  a real analytic map $U$ from  $\mathopen]-\epsilon'',\epsilon''\mathclose[$ to $C^2(\overline{V})$ such that  
\begin{equation}\label{eq:rep1}
\overline{V} \subseteq \mathbb{S}_q[\Omega_{p,\epsilon}]^- \qquad \forall \epsilon \in \mathopen]-\epsilon'',\epsilon''[
\end{equation}
and 
\begin{equation}\label{eq:rep2}
u_\epsilon(x)=\epsilon U[\epsilon](x) \qquad \forall x \in \overline{V}\, 
\end{equation}
for all $\epsilon \in \mathopen]0,\epsilon''[$. Moreover,
\begin{equation}\label{eq:rep3}
U[0]=  {G^{k}_{q,\eta}}(x-p) \int_{\partial \Omega} \tilde{\theta}(s)\,d\sigma_s \qquad \forall x \in \overline{V}\, .
\end{equation}
\end{theorem}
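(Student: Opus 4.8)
The idea is to start from the representation formula in Proposition \ref{prop:bij}, namely
\[
u_\epsilon(x)=\mathcal{S}^{k,-}_{q,\eta}[\partial\Omega_{p,\epsilon},\epsilon^{-1}\theta_\epsilon((\cdot-p)/\epsilon)](x),
\]
rescale the integral over $\partial\Omega_{p,\epsilon}$ to an integral over the fixed set $\partial\Omega$, and then exploit the real analyticity of $\epsilon\mapsto\Theta[\epsilon]=\theta_\epsilon$ from Proposition \ref{prop:Theta} together with the smooth dependence of the kernel $G^k_{q,\eta}$ on its argument. First I would fix a bounded open $V$ with $\overline{V}\cap(p+q\mathbb{Z}^3)=\emptyset$ and choose $\epsilon''\in\mathopen]0,\epsilon'\mathclose]$ small enough that \eqref{eq:rep1} holds, i.e.\ $\overline{V}\subseteq\mathbb{S}_q[\Omega_{p,\epsilon}]^-$ for all $\epsilon\in\mathopen]-\epsilon'',\epsilon''\mathclose[$; this is possible because $\overline\Omega$ is bounded and $\overline V$ is a positive distance from $p+q\mathbb{Z}^3$, so for $\epsilon$ small the perforations $p+\epsilon\overline\Omega+qz$ stay away from $\overline V$.

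The second step is the change of variables. For $x\in\overline V$ one writes, with $y=p+\epsilon s$, $s\in\partial\Omega$, and recalling that $d\sigma_y=\epsilon^{2}\,d\sigma_s$ in dimension $n=3$,
\[
u_\epsilon(x)=\int_{\partial\Omega}\epsilon^{-1}G^{k}_{q,\eta}(x-p-\epsilon s)\,\theta_\epsilon(s)\,\epsilon^{2}\,d\sigma_s
=\epsilon\int_{\partial\Omega}G^{k}_{q,\eta}(x-p-\epsilon s)\,\Theta[\epsilon](s)\,d\sigma_s.
\]
This already suggests the definition
\[
U[\epsilon](x):=\int_{\partial\Omega}G^{k}_{q,\eta}(x-p-\epsilon s)\,\Theta[\epsilon](s)\,d\sigma_s,
\]
so that \eqref{eq:rep2} holds by construction for $\epsilon\in\mathopen]0,\epsilon''\mathclose[$. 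Setting $\epsilon=0$ gives \eqref{eq:rep3} immediately, since $G^k_{q,\eta}(x-p-0\cdot s)=G^k_{q,\eta}(x-p)$ does not depend on $s$ and $\Theta[0]=\tilde\theta$.

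The heart of the matter is showing that $U$ is a \emph{real analytic} map from $\mathopen]-\epsilon'',\epsilon''\mathclose[$ into $C^2(\overline V)$, and this is the step I expect to be the main obstacle. The key point is that for $x\in\overline V$ and $s\in\partial\Omega$ the argument $x-p-\epsilon s$ stays in a compact subset of $\mathbb{R}^3\setminus q\mathbb{Z}^3$ for all $\epsilon\in\mathopen]-\epsilon'',\epsilon''\mathclose[$ (shrinking $\epsilon''$ if needed), and on such a set $G^k_{q,\eta}$ is real analytic by Theorem \ref{qnSR}. One then argues that the map $(\epsilon,s)\mapsto G^k_{q,\eta}(\cdot-p-\epsilon s)$ is real analytic from a neighbourhood of $\mathopen]-\epsilon'',\epsilon''\mathclose[\times\partial\Omega$ into $C^2(\overline V)$ — this follows from the real analyticity of $G^k_{q,\eta}$ away from the lattice, uniform bounds on all derivatives on the relevant compact set, and the fact that composition with the affine map $(\epsilon,s,x)\mapsto x-p-\epsilon s$ preserves analyticity. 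Combining this with the real analyticity of $\epsilon\mapsto\Theta[\epsilon]$ into $C^{0,\alpha}(\partial\Omega)$ from Proposition \ref{prop:Theta}, and using that the bilinear integration pairing
\[
(f,\theta)\mapsto\int_{\partial\Omega}f(x-p-\epsilon s)\theta(s)\,d\sigma_s
\]
depends real analytically on its entries into $C^2(\overline V)$, we conclude that $U$ is real analytic as a composition of real analytic maps; here one should invoke the results on analyticity of integral operators with real analytic kernels in the style of \cite{La07} (or the corresponding sections of \cite{DaLaMu21}). Once analyticity of $U$ is established, \eqref{eq:rep2} extends the identity $u_\epsilon=\epsilon U[\epsilon]$ from $\mathopen]0,\epsilon''\mathclose[$ to give the desired real analytic continuation for negative $\epsilon$, and \eqref{eq:rep3} records the value at $\epsilon=0$.
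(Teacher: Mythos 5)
Your proposal is correct and takes essentially the same route as the paper: choose $\epsilon''$ to guarantee \eqref{eq:rep1}, rescale the boundary integral via $y=p+\epsilon s$ (using $d\sigma_y=\epsilon^2 d\sigma_s$ in dimension $3$), define $U[\epsilon](x)=\int_{\partial\Omega}G^k_{q,\eta}(x-p-\epsilon s)\Theta[\epsilon](s)\,d\sigma_s$, and then deduce analyticity of $U$ from the analyticity of $\Theta$ and the real analytic dependence of integral operators with non-singular analytic kernels. The only difference is cosmetic: where you sketch the analyticity-of-the-kernel-operator argument, the paper simply cites \cite[Prop.~5.5]{BrDaLuMu24} for exactly that fact.
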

\begin{proof}
Taking $\epsilon''  \in \mathopen]0,\epsilon']$ small enough, we can clearly assume that  condition \eqref{eq:rep1} holds. Let $U$ be the map from $\mathopen]-\epsilon'',\epsilon''[$ to  {$C^2(\overline{V})$} defined by
\[
U[\epsilon](x):= \int_{\partial \Omega}{G^{k}_{q,\eta}}(x-p-\epsilon s)\Theta[\epsilon](s)\,d\sigma_s  \qquad \forall x \in \overline{V},
\]
for all $\epsilon \in \mathopen]-\epsilon'',\epsilon''[$. By Proposition \ref{prop:Theta} and by \cite[Prop.~5.5]{BrDaLuMu24},  $U$ is real analytic and we have
\[
u_\epsilon(x)={\mathcal{S}^{k,-}_{q,\eta}}[\partial\Omega_{p,\epsilon},\epsilon^{-1}\theta_\epsilon ((\cdot-p)/\epsilon)](x)=\epsilon  U[\epsilon](x) \qquad \forall x \in \overline{V}\, ,
\]
for all $\epsilon \in \mathopen]0,\epsilon''[$. Then the validity of \eqref{eq:rep2} and \eqref{eq:rep3} follows immediately.
\end{proof}

Theorem \ref{thm:rep} implies, in particular, that if $\overline{x} \in V$, {then
\[
\epsilon \mapsto  \epsilon^{-1}u_\epsilon(\overline{x})
\]
admits a real analytic continuation on a whole neighborhood of $\epsilon=0$. Moreover, its Taylor series around $\epsilon=0$ coincides with that of
\[
\epsilon \mapsto U[\epsilon](\overline{x})\, .
\]
Since 
\[
U[0](\overline{x})={G^{k}_{q,\eta}}(\overline{x}-p) \int_{\partial \Omega} \tilde{\theta}(s)\,d\sigma_s
\]
and $\epsilon \mapsto U[\epsilon](\overline{x})$ is real analytic, we deduce that there exists a sequence $\{\tilde{a}_j\}_{j\in \mathbb{N}}$ of real numbers such that
\[
U[\epsilon](\overline{x})= {G^{k}_{q,\eta}}(\overline{x}-p) \int_{\partial \Omega} \tilde{\theta}(s)\,d\sigma_s+\sum_{j=1}^{+\infty}\tilde{a}_j\epsilon^{j} 
\]
for $\epsilon$ small enough. Moreover, the power series $\sum_{j=1}^{+\infty}\tilde{a}_j\epsilon^{j}$ converges for $\epsilon$ in a neighborhood of $0$. Therefore, if we set $a_j=\tilde{a}_{j-1}$ for all $j \in \mathbb{N}$ with $j \geq 2$, then
\[
u_\epsilon(\overline{x})=\epsilon {G^{k}_{q,\eta}}(\overline{x}-p) \int_{\partial \Omega} \tilde{\theta}(s)\,d\sigma_s +\sum_{j=2}^{+\infty}a_j \epsilon^j\, ,
\]
for $\epsilon$ positive and small enough, where $\sum_{j=2}^{+\infty}a_j \epsilon^j$ is a convergent power series for $\epsilon$ in a neighborhood of $0$. }{Note that if one is interested in knowing the subsequent coefficients {$\{a_j\}_{j \in \mathbb{N}}$} in the series expansion of $u_\epsilon(\overline{x})$, this can be done through a constructive procedure (see, for example, \cite{DaMuPu19}).}

\section{The case of systems: the Lam\'e equations}\label{s:lam}

The aim of this section is to show that the results of the previous sections on periodic (and quasi-periodic) layer potentials for elliptic equations can be extended to systems. As an example, we will consider the Lam\'e system.

{We set
\[
L[\omega]u{:=} \Delta u+\omega \nabla \mathrm{div}u \,,
\] 
for all regular functions}  {$u=(u_1,\dots,u_n)$ from a subset of $\mathbb{R}^n$  with values in $\mathbb{R}^n$. Here above, if $u=(u_1,\dots,u_n)$ is a sufficiently regular function with values in $\mathbb{R}^n$, the operator $\Delta$ is applied componentwise, {\it i.e.} $\Delta u = (\Delta u_1,\dots,\Delta u_n)$.}

In the following result of \cite[Thm.~3.1]{DaMu14} we introduce a periodic fundamental solution for the Lam\'e equations (see also Ammari and Kang \cite[Lemma 9.21]{AmKa07}). As we have already seen, this is the preliminary step to define periodic layer potentials for the Lam\'e equations.


\begin{theorem}
\label{psperL}
Let $\omega \in \mathopen]1-(2/n),+\infty[$. Let $\Gamma_{n,\omega}^{q}:= (\Gamma_{n,\omega,j}^{q,k})_{(j,k)\in\{1,\dots,n\}^2}$ be the $n \times n$ matrix of tempered distributions
 with $(j,k)$ entry defined by the generalized series
\[
\Gamma_{n,\omega,j}^{q,k}:= \sum_{z \in \mathbb{Z}^n \setminus \{0\}} \frac{1}{4 \pi^2 |Q|  |q^{-1}z|^2}\Biggl[ -\delta_{j,k}+\frac{\omega}{\omega+1}\frac{(q^{-1}z)_j(q^{-1}z)_k}{|q^{-1}z|^2}\Biggr]E_{2 \pi iq^{-1}z} \qquad \forall (j,k) \in \{1,\dots,n\}^2\,.
\]
{Here above, the symbol $\delta_{j,k}$ is the Kronecker delta function and equals $1$ if $j=k$ and $0$ if $j\neq k$.} Then $\Gamma_{n,\omega}^{q}$ is {$Q$-periodic} and is such that
\[
L[\omega] \Gamma_{n,\omega}^{q}=\sum_{z \in \mathbb{Z}^n}\delta_{qz}I_n-\frac{1}{ |Q|_n}I_n \, .
\]
Moreover, $\Gamma_{n,\omega}^{q}$ is real analytic from $\mathbb{R}^n \setminus q\mathbb{Z}^n$ to $M_n(\mathbb{R})$, $\Gamma_{n,\omega,j}^{q,k} \in L^1_{\mathrm{loc}}(\mathbb{R}^n)$, for all $(j,k)\in \{1,\dots,n\}^2$, and $\Gamma_{n,\omega}^q(x)=\Gamma_{n,\omega}^{q}(-x)$ for all $x \in \mathbb{R}^n \setminus q\mathbb{Z}^n$.
\end{theorem}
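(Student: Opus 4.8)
The plan is to mimic the proof strategy already used for the Laplace case in Theorem \ref{psper}, transplanting it to the vector-valued setting. I would start by showing that the generalized series defining each entry $\Gamma_{n,\omega,j}^{q,k}$ converges in the sense of tempered distributions. This is the analogue of verifying that $S_{q,n}$ is a tempered distribution: one tests the series against a Schwartz function $f$, uses the fact that the Fourier coefficients $\widehat{f}(2\pi q^{-1}z)$ decay faster than any polynomial, and observes that the coefficients in front of $E_{2\pi i q^{-1}z}$ are $O(|z|^{-2})$ (the bracketed factor is bounded since $|\omega/(\omega+1)|\le$ const for $\omega>1-2/n>-1$, and $(q^{-1}z)_j(q^{-1}z)_k/|q^{-1}z|^2$ is bounded by $1$). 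Pairing with the Fourier transform then shows the series converges and defines a tempered distribution, which is moreover $Q$-periodic because it is a superposition of the $Q$-periodic exponentials $E_{2\pi i q^{-1}z}$.

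Next I would verify the key PDE identity $L[\omega]\Gamma_{n,\omega}^q = \sum_{z\in\mathbb{Z}^n}\delta_{qz}I_n - |Q|_n^{-1}I_n$. The natural route is to apply $L[\omega]$ termwise in the sense of distributions. Applying $\Delta$ to $E_{2\pi i q^{-1}z}$ multiplies it by $-4\pi^2|q^{-1}z|^2$, and applying $\nabla\operatorname{div}$ to the $k$-th column (which is $\Gamma_{n,\omega,\cdot}^{q,k}$) produces the factor $-4\pi^2(q^{-1}z)_j(q^{-1}z)_k$ in the $j$-th component. A direct algebraic check shows that, for each fixed $z\ne 0$, the matrix $-4\pi^2\big(|q^{-1}z|^2 M_z + \omega\, (q^{-1}z)(q^{-1}z)^{T}\big)$ applied to the coefficient matrix $\frac{1}{4\pi^2|Q|_n|q^{-1}z|^2}\big[-I_n + \frac{\omega}{\omega+1}\frac{(q^{-1}z)(q^{-1}z)^T}{|q^{-1}z|^2}\big]$ collapses to $|Q|_n^{-1}I_n$; this is the $n$-dimensional analogue of the scalar computation $(-4\pi^2|q^{-1}z|^2)\cdot\frac{1}{-4\pi^2|q^{-1}z|^2|Q|_n}=|Q|_n^{-1}$, and it uses the elementary rank-one identity $(q^{-1}z)(q^{-1}z)^T(q^{-1}z)(q^{-1}z)^T=|q^{-1}z|^2(q^{-1}z)(q^{-1}z)^T$. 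Hence $L[\omega]\Gamma_{n,\omega}^q = |Q|_n^{-1}\sum_{z\ne 0}E_{2\pi i q^{-1}z}I_n$. Finally, the Poisson summation formula gives $\sum_{z\in\mathbb{Z}^n}\delta_{qz} = |Q|_n^{-1}\sum_{z\in\mathbb{Z}^n}E_{2\pi i q^{-1}z}$, so $|Q|_n^{-1}\sum_{z\ne 0}E_{2\pi i q^{-1}z}I_n = \big(\sum_{z\in\mathbb{Z}^n}\delta_{qz} - |Q|_n^{-1}\big)I_n$, which is the claimed identity.

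It remains to establish the regularity statements: real analyticity of $\Gamma_{n,\omega}^q$ on $\mathbb{R}^n\setminus q\mathbb{Z}^n$, local integrability of each entry, and the evenness $\Gamma_{n,\omega}^q(x)=\Gamma_{n,\omega}^q(-x)$. For local integrability and real analyticity off the lattice, the cleanest approach is to compare $\Gamma_{n,\omega}^q$ with the classical (non-periodic) fundamental solution $\Gamma_{n,\omega}$ of the Lam\'e operator: one shows that near the origin $\Gamma_{n,\omega}^q - \Gamma_{n,\omega}$ solves a homogeneous Lam\'e-type equation with a smooth right-hand side, hence is real analytic, so $\Gamma_{n,\omega}^q$ inherits the local $L^1$ behavior and the analyticity-away-from-singularities of $\Gamma_{n,\omega}$; by $Q$-periodicity this extends to all of $\mathbb{R}^n\setminus q\mathbb{Z}^n$. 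Alternatively, since $L[\omega]$ is elliptic with constant coefficients and $L[\omega]\Gamma_{n,\omega}^q$ is real analytic on $\mathbb{R}^n\setminus q\mathbb{Z}^n$, elliptic regularity gives analyticity there directly. The evenness follows by inspecting the series: replacing $x$ by $-x$ sends $E_{2\pi i q^{-1}z}$ to $E_{-2\pi i q^{-1}z} = E_{2\pi i q^{-1}(-z)}$, and since the coefficient matrix is unchanged under $z\mapsto -z$ (it depends on $z$ only through $|q^{-1}z|^2$ and the even quadratic form $(q^{-1}z)_j(q^{-1}z)_k$), reindexing $z\mapsto -z$ recovers the original series. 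The main obstacle I anticipate is purely bookkeeping: carrying out the matrix algebra in the PDE identity cleanly — in particular confirming that the $\omega/(\omega+1)$ coefficient is exactly what makes the transverse and longitudinal parts combine to the identity, and checking that the constraint $\omega>1-2/n$ (equivalently $\omega+1>2-2/n>0$, so $\omega/(\omega+1)$ is well-defined and the operator is elliptic) is genuinely all that is needed. None of the analytic estimates are delicate; they parallel the scalar case verbatim, so I would cite Theorem \ref{psper} and \cite[Thm.~12.22]{DaLaMu21} for the template and concentrate the written proof on the algebraic identity and the Poisson summation step.
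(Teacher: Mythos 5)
Your proposal is sound and follows the route one would expect: it is the vector-valued transplant of the argument for Theorem \ref{psper}, and the paper itself gives no proof here, deferring to \cite[Thm.~3.1]{DaMu14} and to the general template of \cite[Thm.~12.22]{DaLaMu21}, both of which proceed by exactly this scheme (convergence of the Fourier series in $\mathcal{S}'(\mathbb{R}^n)$, termwise application of the operator, inversion of the constant-coefficient symbol, Poisson summation, then analytic hypoellipticity and a comparison with the free-space fundamental solution for local integrability). Your central algebraic step is correct and is the crux of the matter: writing $\xi=q^{-1}z$, the symbol of $L[\omega]$ at $2\pi\xi$ is $-4\pi^2\bigl(|\xi|^2 I_n+\omega\,\xi\xi^{T}\bigr)$, and a direct expansion using $\xi\xi^{T}\xi\xi^{T}=|\xi|^2\,\xi\xi^{T}$ shows
\[
\bigl(|\xi|^2 I_n+\omega\,\xi\xi^{T}\bigr)\Bigl(-I_n+\tfrac{\omega}{\omega+1}\tfrac{\xi\xi^{T}}{|\xi|^2}\Bigr)=-|\xi|^2 I_n\,,
\]
so that the product of the symbol with the coefficient matrix collapses to $|Q|_n^{-1}I_n$ exactly as you assert; the coefficient $\omega/(\omega+1)$ is well-defined and the symbol is invertible because $\omega>1-(2/n)>-1$, so $\omega+1>0$.

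Two small remarks. First, where you write ``$|q^{-1}z|^2 M_z$'' in the symbol, you clearly mean $|q^{-1}z|^2 I_n$; this is just a notational slip. Second, for the analyticity off the lattice you offer two variants (subtracting the classical $\Gamma_{n,\omega}$, or invoking analytic hypoellipticity directly); both are valid, and the second is cleaner since $L[\omega]\Gamma_{n,\omega}^{q}$ equals the constant $-|Q|_n^{-1}I_n$ on $\mathbb{R}^n\setminus q\mathbb{Z}^n$, while the first gives local integrability essentially for free by comparison with the known $L^1_{\mathrm{loc}}$ behavior of the classical Kelvin-type kernel. Either way the argument closes, and the evenness observation via $z\mapsto -z$ is exactly right because both $|q^{-1}z|^2$ and $(q^{-1}z)_j(q^{-1}z)_k$ are even in $z$.
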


{In the theorem, $M_n(\mathbb{R})$ denotes the space of $n\times n$ matrices with real entries and $I_n$ denotes the $n\times n$ identity matrix.}

We find it convenient to set
\[
\Gamma_{n,\omega}^{q,j}:= \bigl(\Gamma_{n,\omega,i}^{q,j}\bigr)_{i \in \{1,\dots,n\}}\,,
\]
which we think as column vectors for all $j\in\{1,\dots,n\}$.  

{In order to define the layer potentials, we denote by $T$ the function from $ \mathopen]1-(2/n),+\infty\mathclose[\times M_n(\mathbb{R})$ to $M_n(\mathbb{R})$ defined by 
\[
T(\omega,A){:=} (\omega-1)(\mathrm{tr}A)I_n+(A+A^t)
\]  
for all  $\omega \in \mathopen]1-(2/n),+\infty\mathclose[$ and $A \in M_n(\mathbb{R})$, where $\mathrm{tr}A$ and $A^t$ denote the trace and the transpose matrix of $A$, respectively. }

We introduce the periodic double-layer potential. 
Let $\omega \in \mathopen]1-(2/n),+\infty[$. Let $\alpha \in \mathopen]0,1[$ and $\Omega_Q$ be as in \eqref{OmegaQ_def}. For  $\mu \in C^{0}(\partial {\Omega_Q},\mathbb{R}^n)$, we define the periodic double-layer potential $\mathcal{D}_q^{L,\omega}[\partial\Omega_Q, \mu]$ by setting
\[
\mathcal{D}_q^{L,\omega}[\partial\Omega_Q, \mu](x):= -\Biggl(\int_{\partial {\Omega_Q}}\mu^t(y)T(\omega, D \Gamma_{n,\omega}^{q,i}(x-y))\nu_{{\Omega_Q}}(y)\,d\sigma_y\Biggr)_{i\in\{1,\dots,n\}} \  \forall x \in \mathbb{R}^n\,,
\] which we think as a column vector. {Here above, $D \Gamma_{n,\omega}^{q,i}$ denotes the Jacobian matrix of $\Gamma_{n,\omega}^{q,i}$ for $i \in \{1,\dots,n\}$.}
Moreover, we set
\[
\mathcal{K}_q^{L,\omega}[\partial\Omega_Q, \mu]:=\mathcal{D}_q^{L,\omega}[\partial\Omega_Q, \mu]_{|\partial \Omega_Q}\qquad \text{on $\partial \Omega_Q$}\, .
\]

Then the standard properties of periodic layer potentials also hold for the periodic double-layer potential $\mathcal{D}_q^{L,\omega}[\partial\Omega_Q, \mu]$ (see \cite[Thm.~3.3]{DaMu14}).

\begin{theorem}
\label{dperpot}
Let $\omega \in \mathopen]1-(2/n),+\infty[$. Let $\alpha \in \mathopen]0,1[$ and $\Omega_Q$ be as in \eqref{OmegaQ_def}.  Then the following statements hold.
\begin{itemize}
\item[(i)] If $\mu\in {C^{0}(\partial{\Omega_Q},\mathbb{R}^n)}$, then $\mathcal{D}_q^{L,\omega}[\partial\Omega_Q,\mu]$ is {$Q$-periodic, $\mathcal{D}_q^{L,\omega}[\partial\Omega_Q,\mu]$ is of class $C^\infty(\mathbb{R}^n\setminus \partial \mathbb{S}_q[\Omega_Q],\mathbb{R}^n)$,} and
\[
L[\omega]\mathcal{D}_q^{L,\omega}[\partial\Omega_Q,\mu](x)=0\qquad\forall x\in {\mathbb{R}}^{n}\setminus\partial{\mathbb{S}}_q[{\Omega_Q}]\,.
\]
\item[(ii)] If $\mu\in C^{1,\alpha}(\partial{\Omega_Q},\mathbb{R}^n)$, then 
the restriction $\mathcal{D}_q^{L,\omega}[\partial\Omega_Q,\mu]_{|{\mathbb{S}}_q[{\Omega_Q}]}$ can be extended to a function $\mathcal{D}_q^{L,\omega,+}[\partial \Omega_Q,\mu] \in C_{q}^{1,\alpha}(\overline{{\mathbb{S}}_q[{\Omega_Q}]},\mathbb{R}^n)$ and
\[
\mathcal{D}_q^{L,\omega,+}[\partial \Omega_Q,\mu]=\frac{1}{2}\mu+
\mathcal{K}_q^{L,\omega}[\partial\Omega_Q,\mu]
\qquad{\mathrm{on}}\ \partial {\Omega_Q} \,.
\]
Moreover, the operator from  $C^{1,\alpha}(\partial{\Omega_Q},\mathbb{R}^n)$ to 
$C^{1,\alpha}_{q}(\overline{{\mathbb{S}}_q[{\Omega_Q}]},\mathbb{R}^n)$ that takes $\mu$ to $
\mathcal{D}_q^{L,\omega,+}[\partial \Omega_Q,\mu]$ is  linear and continuous. 
\item[(iii)] If $\mu\in C^{1,\alpha}(\partial{\Omega_Q},\mathbb{R}^n)$, then  the  restriction $\mathcal{D}_q^{L,\omega}[\partial\Omega_Q,\mu]_{|{\mathbb{S}}_q[{\Omega_Q}]^{-}}$ can be extended to a function $\mathcal{D}_q^{L,\omega,-}[\partial \Omega_Q,\mu] \in C^{1,\alpha}_{q}(\overline{{\mathbb{S}}_q[{\Omega_Q}]^{-}},\mathbb{R}^n)$ and 
\[
\mathcal{D}_q^{L,\omega,-}[\partial \Omega_Q,\mu]=-\frac{1}{2}\mu+
\mathcal{K}_q^{L,\omega}[\partial\Omega_Q,\mu]
\qquad{\mathrm{on}}\ \partial {\Omega_Q} \,.
\]
Moreover, the operator   from   $C^{1,\alpha}(\partial{\Omega_Q},\mathbb{R}^n)$ to $C^{1,\alpha}_{q}(\overline{{\mathbb{S}}_q[{\Omega_Q}]^{-}},\mathbb{R}^n)$ that takes $\mu$ to $\mathcal{D}_q^{L,\omega,-}[\partial \Omega_Q,\mu]$ is continuous. 
\item[(iv)] The map   from   $C^{1,\alpha}(\partial{\Omega_Q},\mathbb{R}^n)$ to itself that takes $\mu$ to $\mathcal{K}_q^{L,\omega}[\partial\Omega_Q,\mu]$ is continuous. 
\end{itemize}
\end{theorem}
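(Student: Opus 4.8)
The plan is to reduce all four statements to the classical (non-periodic) double-layer potential for the Lam\'e system by confining the effect of periodicity to a smoothing remainder. First I would record the splitting of the periodic fundamental solution: on any bounded open neighbourhood $U$ of $0$ with $\overline{U}\cap(q\mathbb{Z}^n\setminus\{0\})=\emptyset$ one has, by Theorem~\ref{psperL}, $L[\omega]\big(\Gamma_{n,\omega}^{q}-\Gamma_{n,\omega}\big)=-\frac{1}{|Q|_n}I_n$ on $U$, where $\Gamma_{n,\omega}$ is the classical fundamental solution (the Kelvin matrix) of $L[\omega]$; the Dirac mass at the origin cancels, so by elliptic regularity $R_{n,\omega}^{q}:=\Gamma_{n,\omega}^{q}-\Gamma_{n,\omega}$ is real analytic on $U$ (cf.\ the analogous argument in \cite{DaMu14,DaLaMu21}). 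Writing $\Gamma_{n,\omega}^{q,i}=\Gamma_{n,\omega}^{i}+R_{n,\omega}^{q,i}$ and using linearity of $T(\omega,\cdot)$ in its second argument, this induces the decomposition $\mathcal{D}_q^{L,\omega}[\partial\Omega_Q,\mu]=\mathcal{D}^{L,\omega}[\partial\Omega_Q,\mu]+\mathcal{D}_{\mathrm{reg}}[\partial\Omega_Q,\mu]$, where $\mathcal{D}^{L,\omega}$ is the classical Lam\'e double-layer potential and $\mathcal{D}_{\mathrm{reg}}$ is the operator obtained by replacing $\Gamma_{n,\omega}^{q,i}$ with $R_{n,\omega}^{q,i}$.

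Since $\mathcal{D}_q^{L,\omega}[\partial\Omega_Q,\mu]$ is $Q$-periodic — which follows immediately from the $Q$-periodicity of $\Gamma_{n,\omega}^{q}$ by translating $x$ and relabelling in the integral, giving part of (i) — it suffices to establish the regularity and jump statements of (ii) and (iii) on a fixed bounded open neighbourhood $W$ of $\overline{\Omega_Q}$ with $\overline{W}\subseteq Q$; periodicity then propagates them to all of $\mathbb{R}^n$. On $W$ the difference $x-y$ with $y\in\partial\Omega_Q$ ranges over a compact subset of $\mathbb{R}^n\setminus(q\mathbb{Z}^n\setminus\{0\})$, so the kernel of $\mathcal{D}_{\mathrm{reg}}$ is jointly real analytic in $(x,y)$; hence $\mathcal{D}_{\mathrm{reg}}[\partial\Omega_Q,\mu]$ is real analytic on $W$ for each fixed $\mu$, it extends across $\partial\Omega_Q$ with no jump, and $\mu\mapsto\mathcal{D}_{\mathrm{reg}}[\partial\Omega_Q,\mu]$ is linear and continuous from $C^0(\partial\Omega_Q,\mathbb{R}^n)$ into $C^{1,\alpha}(\overline{W},\mathbb{R}^n)$ (in fact into $C^\infty$). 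For the classical part I would invoke the standard Schauder theory of the Lam\'e double-layer potential (see \cite{DaLaMu21}): $\mathcal{D}^{L,\omega}[\partial\Omega_Q,\mu]$ is of class $C^\infty$ off $\partial\Omega_Q$ and solves $L[\omega]u=0$ there; for $\mu\in C^{1,\alpha}$ the interior and exterior restrictions extend to $C^{1,\alpha}$ functions with $\mathcal{D}^{L,\omega,\pm}[\partial\Omega_Q,\mu]=\pm\frac12\mu+\mathcal{K}^{L,\omega}[\partial\Omega_Q,\mu]$ on $\partial\Omega_Q$, the corresponding maps into $C^{1,\alpha}$ of the closed (interior/exterior) domains are linear and continuous, and $\mathcal{K}^{L,\omega}$ is continuous on $C^{1,\alpha}(\partial\Omega_Q,\mathbb{R}^n)$. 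Adding the two contributions gives (ii)--(iv), with $\mathcal{K}_q^{L,\omega}=\mathcal{K}^{L,\omega}+\mathcal{D}_{\mathrm{reg}}[\partial\Omega_Q,\cdot]_{|\partial\Omega_Q}$, the last summand being compact (smoothing), so that $\mathcal{K}_q^{L,\omega}$ inherits the continuity of $\mathcal{K}^{L,\omega}$.

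It remains to complete (i). For $x\notin\partial\mathbb{S}_q[\Omega_Q]$ and $y\in\partial\Omega_Q$ the difference $x-y$ avoids $q\mathbb{Z}^n$, where $\Gamma_{n,\omega}^{q}$ is real analytic, so one may differentiate under the integral sign arbitrarily often, giving $\mathcal{D}_q^{L,\omega}[\partial\Omega_Q,\mu]\in C^\infty(\mathbb{R}^n\setminus\partial\mathbb{S}_q[\Omega_Q],\mathbb{R}^n)$. To obtain $L[\omega]\mathcal{D}_q^{L,\omega}[\partial\Omega_Q,\mu]=0$ there, I would differentiate under the integral and commute the constant-coefficient operator $L[\omega]_x$ past the $y$-derivatives and the contraction $T(\omega,\cdot)\nu_{\Omega_Q}(y)$; by Theorem~\ref{psperL}, $L[\omega]_x\Gamma_{n,\omega}^{q}(x-y)$ equals $-\frac{1}{|Q|_n}I_n$ plus Dirac masses concentrated on $x-y\in q\mathbb{Z}^n$, and after the extra $x$-differentiation built into the double-layer kernel the constant is annihilated while the (derivatives of the) Dirac masses are supported outside the region considered, so the result vanishes. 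This is exactly the reason why, unlike the single-layer potential, no additional term survives.

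The step I expect to require the most care is matching the classical results to the precise conormal/$T(\omega,\cdot)$ formulation used here: the orientation of $\nu_{\Omega_Q}$, the $\pm\frac12$ and $|Q|_n$ conventions, and, importantly, the fact that for the Lam\'e system $\mathcal{K}^{L,\omega}$ is only bounded — not, in general, compact — on $C^{1,\alpha}(\partial\Omega_Q,\mathbb{R}^n)$, in contrast with the Laplace case of Theorem~\ref{perbvp.dbperpot}(iv), where the double-layer kernel simplifies to a weakly singular one. Everything genuinely new and periodic is routine once the real-analytic splitting $\Gamma_{n,\omega}^{q}=\Gamma_{n,\omega}+R_{n,\omega}^{q}$ is in hand.
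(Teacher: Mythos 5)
Your proposal is correct and follows essentially the same route as the cited reference \cite[Thm.~3.3]{DaMu14} (and the analogous arguments in \cite{DaLaMu21}): split the periodic fundamental solution $\Gamma_{n,\omega}^{q}$ as the classical Kelvin matrix plus a real-analytic remainder via analytic hypoellipticity, reduce to a fixed neighbourhood of $\overline{\Omega_Q}$ by periodicity, and invoke the classical Schauder theory of the Lam\'e double-layer potential, with the remainder contributing a smoothing operator and no jump. One small imprecision: the kernel of $\mathcal{D}_{\mathrm{reg}}$ is real analytic in $x$ but only $C^{0,\alpha}$ in $y$ (because of the factor $\nu_{\Omega_Q}(y)$ on a $C^{1,\alpha}$ boundary), not jointly real analytic in $(x,y)$; this does not affect the argument, and your observation that continuity (rather than compactness) of $\mathcal{K}_q^{L,\omega}$ is all that holds here — since the Lam\'e traction kernel is not weakly singular — is exactly the right point to flag.
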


Let $\alpha \in \mathopen]0,1[$ and $\Omega_Q$ be as in \eqref{OmegaQ_def}. Similarly, for  $\mu \in C^{0}(\partial {\Omega_Q},\mathbb{R}^n)$,  we denote by $\mathcal{S}_q^{L,\omega}[\partial \Omega_Q,\mu]$ the periodic single-layer potential, i.e., the function from $\mathbb{R}^n$ to $\mathbb{R}^n$ defined by setting
\[
\mathcal{S}_q^{L,\omega}[\partial \Omega_Q,\mu](x):= \int_{\partial {\Omega_Q}}\Gamma^q_{n,\omega}(x-y)\mu(y)\,d\sigma_y \qquad \forall x \in \mathbb{R}^n\,.
\]
 We also set
\[
(\mathcal{K}_q^{L,\omega})^\ast[\partial \Omega_Q,\mu](x):= \int_{\partial {\Omega_Q}}\sum_{l=1}^n \mu_{l}(y)T(\omega,D\Gamma_{n,\omega}^{q,l}(x-y))\nu_{{\Omega_Q}}(x)\,d\sigma_y \quad \forall x \in \partial {\Omega_Q}\,.
\]

Clearly, we can deduce the validity of similar properties also for the  periodic single-layer potential $\mathcal{S}_q^{L,\omega}[\partial\Omega_Q, \mu]$ (see \cite[Thm.~3.2]{DaMu14}).

\begin{theorem}
\label{sperpot}
Let $\omega \in \mathopen]1-(2/n),+\infty[$. Let $\alpha \in \mathopen]0,1[$ and $\Omega_Q$ be as in \eqref{OmegaQ_def}.  Then the following statements hold.
\begin{itemize}
\item[(i)] If $\mu\in {C^{0}(\partial{\Omega_Q},\mathbb{R}^n)}$, then $\mathcal{S}_q^{L,\omega}[\partial \Omega_Q,\mu]$ is {$Q$-periodic, $\mathcal{S}_q^{L,\omega}[\partial\Omega_Q,\mu]$ is of class $C^\infty(\mathbb{R}^n\setminus \partial \mathbb{S}_q[\Omega_Q],\mathbb{R}^n)$,} and
\[
L[\omega]\mathcal{S}_q^{L,\omega}[\partial \Omega_Q,\mu](x)
=
-\frac{1}{|Q|_n}\int_{\partial{\Omega_Q}}\mu \,d\sigma
\]
for all  $x\in 
 {\mathbb{R}}^{n}\setminus\partial{\mathbb{S}}_q[{\Omega_Q}]$.
\item[(ii)]  If $\mu\in C^{0,\alpha}(\partial{\Omega_Q},\mathbb{R}^n)$, then the function 
$\mathcal{S}_q^{L,\omega,+}[\partial \Omega_Q,\mu]:= \mathcal{S}_q^{L,\omega}[\partial \Omega_Q,\mu]_{|\overline{{\mathbb{S}}_q[{\Omega_Q}]}}$ belongs to $C^{1,\alpha}_{q}(\overline{{\mathbb{S}}_q[{\Omega_Q}]},\mathbb{R}^n)$  and  
\[
T\bigl(\omega,D\mathcal{S}_q^{L,\omega,+}[\partial \Omega_Q,\mu](x)\bigr)\nu_{{\Omega_Q}}(x)=-\frac{1}{2}\mu(x)+(\mathcal{K}_q^{L,\omega})^\ast[\partial\Omega_Q,\mu](x) \qquad \forall x \in \partial{\Omega_Q}\, ,
\]
for all $\mu \in C^{0,\alpha}(\partial{\Omega_Q},\mathbb{R}^n)$. Moreover, the operator
 that takes $\mu$ to 
$\mathcal{S}_q^{L,\omega,+}[\partial \Omega_Q,\mu]  $ is  continuous from $C^{0,\alpha}(\partial{\Omega_Q},\mathbb{R}^n)$ to $C^{1,\alpha}_{q}(\overline{{\mathbb{S}}_q[{\Omega_Q}]},\mathbb{R}^n)$. 
\item[(iii)]  If $\mu\in C^{0,\alpha}(\partial{\Omega_Q},\mathbb{R}^n)$, then the function 
$\mathcal{S}_q^{L,\omega,-}[\partial \Omega_Q,\mu]:= \mathcal{S}_q^{L,\omega}[\partial \Omega_Q,\mu]_{|\overline{{\mathbb{S}}_q[{\Omega_Q}]^{-}}}$ belongs to $C^{1,\alpha}_{q}
(\overline{{\mathbb{S}}_q[{\Omega_Q}]^{-}},\mathbb{R}^n)$  and we have
\[
T\bigl(\omega,D\mathcal{S}_q^{L,\omega,-}[\partial \Omega_Q,\mu](x)\bigr)\nu_{{\Omega_Q}}(x)=\frac{1}{2}\mu(x)+(\mathcal{K}_q^{L,\omega})^\ast[\partial\Omega_Q,\mu](x) \qquad \forall x \in \partial{\Omega_Q}\, ,
\]
for all $\mu \in C^{0,\alpha}(\partial{\Omega_Q},\mathbb{R}^n)$.
Moreover, the operator that takes $\mu$ to $\mathcal{S}_q^{L,\omega,-}[\partial \Omega_Q,\mu]$   is  continuous from  $ C^{0,\alpha}(\partial{\Omega_Q},\mathbb{R}^n)$ to $C^{1,\alpha}_{q}
(\overline{{\mathbb{S}}_q[{\Omega_Q}]^{-}},\mathbb{R}^n)$.
\item[(iv)]  The operator that takes $\mu$ to $(\mathcal{K}_q^{L,\omega})^\ast[\partial\Omega_Q,\mu]$ is  continuous from the space $C^{0,\alpha}(\partial{\Omega_Q},\mathbb{R}^n)$ to itself.
\end{itemize}
\end{theorem}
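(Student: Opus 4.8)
The plan is to deduce every statement from the corresponding classical (non-periodic) properties of the Lam\'e single-layer potential, after isolating the singular part of $\Gamma^{q}_{n,\omega}$. First, the $Q$-periodicity of $\mathcal{S}_q^{L,\omega}[\partial\Omega_Q,\mu]$ is immediate from the $Q$-periodicity of $\Gamma^{q}_{n,\omega}$ in Theorem \ref{psperL}. For (i), observe that if $x\in\mathbb{R}^n\setminus\partial\mathbb{S}_q[\Omega_Q]$ and $y\in\partial\Omega_Q$, then $x-y\notin q\mathbb{Z}^n$, since otherwise $x\in y+q\mathbb{Z}^n\subseteq\partial\mathbb{S}_q[\Omega_Q]$. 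Hence, by the real analyticity of $\Gamma^{q}_{n,\omega}$ on $\mathbb{R}^n\setminus q\mathbb{Z}^n$ and a standard differentiation-under-the-integral-sign argument, $\mathcal{S}_q^{L,\omega}[\partial\Omega_Q,\mu]$ is real analytic (in particular of class $C^\infty$) on $\mathbb{R}^n\setminus\partial\mathbb{S}_q[\Omega_Q]$, and there $L[\omega]\mathcal{S}_q^{L,\omega}[\partial\Omega_Q,\mu](x)=\int_{\partial\Omega_Q}(L[\omega]\Gamma^{q}_{n,\omega})(x-y)\mu(y)\,d\sigma_y=-\frac{1}{|Q|_n}\int_{\partial\Omega_Q}\mu\,d\sigma$, because the distributional identity in Theorem \ref{psperL} reads $L[\omega]\Gamma^{q}_{n,\omega}=-\frac{1}{|Q|_n}I_n$ away from $q\mathbb{Z}^n$.

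For the remaining statements I would split off the singularity. Write $\Gamma_{n,\omega}$ for the classical fundamental solution of $L[\omega]$ and set $R^{q}_{n,\omega}:=\Gamma^{q}_{n,\omega}-\Gamma_{n,\omega}$. Since $\overline{\Omega_Q}\subseteq Q$, the set $\overline{\Omega_Q}-\overline{\Omega_Q}$ is a compact subset of $\prod_{j=1}^n\mathopen]-q_{jj},q_{jj}\mathclose[$, whose only point of $q\mathbb{Z}^n$ is the origin; moreover $L[\omega]R^{q}_{n,\omega}$ coincides with the (constant, real analytic) matrix $-\frac{1}{|Q|_n}I_n$ near $0$, so by ellipticity of $L[\omega]$ for $\omega\in\mathopen]1-(2/n),+\infty\mathclose[$ and elliptic regularity, $R^{q}_{n,\omega}$ is real analytic on an open neighbourhood of $\overline{\Omega_Q}-\overline{\Omega_Q}$ (this is also recorded in \cite{DaMu14}). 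Accordingly one decomposes $\mathcal{S}_q^{L,\omega}[\partial\Omega_Q,\mu]=\mathcal{S}^{L,\omega}[\partial\Omega_Q,\mu]+\mathcal{R}[\mu]$, where $\mathcal{S}^{L,\omega}[\partial\Omega_Q,\cdot]$ is the classical single-layer potential with kernel $\Gamma_{n,\omega}$ and $\mathcal{R}[\mu](x):=\int_{\partial\Omega_Q}R^{q}_{n,\omega}(x-y)\mu(y)\,d\sigma_y$; correspondingly $(\mathcal{K}_q^{L,\omega})^*$ splits as the classical operator $(\mathcal{K}^{L,\omega})^*$ plus the boundary operator built from $R^{q}_{n,\omega}$.

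Then I would treat the two pieces separately. For the classical term one invokes the known Schauder-space theory of the Lam\'e single-layer potential (see \cite{DaMu14} and the references therein): for $\mu\in C^{0,\alpha}(\partial\Omega_Q,\mathbb{R}^n)$ the restrictions of $\mathcal{S}^{L,\omega}[\partial\Omega_Q,\mu]$ to $\overline{\Omega_Q}$ and to a bounded neighbourhood of $\partial\Omega_Q$ in $\mathbb{R}^n\setminus\Omega_Q$ are of class $C^{1,\alpha}$, the associated maps are linear and continuous, the conormal-derivative jump formula $T(\omega,D\mathcal{S}^{L,\omega,\pm}[\partial\Omega_Q,\mu])\nu_{\Omega_Q}=\mp\frac12\mu+(\mathcal{K}^{L,\omega})^*[\partial\Omega_Q,\mu]$ holds on $\partial\Omega_Q$, and $(\mathcal{K}^{L,\omega})^*$ is continuous on $C^{0,\alpha}(\partial\Omega_Q,\mathbb{R}^n)$. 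For the remainder $\mathcal{R}[\mu]$, since $(x,y)\mapsto R^{q}_{n,\omega}(x-y)$ is real analytic for $x$ in a neighbourhood of $\overline{\Omega_Q}$ and $y\in\partial\Omega_Q$, the map $\mu\mapsto\mathcal{R}[\mu]$ is linear and continuous with values in $C^{1,\alpha}$, its conormal derivative extends continuously across $\partial\Omega_Q$ with no jump, and it contributes exactly the $R^{q}_{n,\omega}$-part of $(\mathcal{K}_q^{L,\omega})^*$, again a continuous operator on $C^{0,\alpha}(\partial\Omega_Q,\mathbb{R}^n)$. Finally, by the $Q$-periodicity established in (i) it suffices to control $\mathcal{S}_q^{L,\omega}[\partial\Omega_Q,\mu]$ on a bounded fundamental region: since $\overline{\mathbb{S}_q[\Omega_Q]}$ is the disjoint union of the $q$-translates of $\overline{\Omega_Q}$, and since $\mathcal{S}_q^{L,\omega}[\partial\Omega_Q,\mu]$ is globally $C^\infty$ away from $\partial\mathbb{S}_q[\Omega_Q]$ and $C^{1,\alpha}$ up to $\partial\Omega_Q$ by the local analysis, one gets membership in $C^{1,\alpha}_q(\overline{\mathbb{S}_q[\Omega_Q]},\mathbb{R}^n)$ and $C^{1,\alpha}_q(\overline{\mathbb{S}_q[\Omega_Q]^-},\mathbb{R}^n)$ together with the stated continuity. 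Adding the two contributions yields (ii), (iii), and (iv).

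I expect the genuinely nontrivial input to be the classical Schauder-space theory of Lam\'e layer potentials invoked above — in particular the jump formula for the conormal derivative $T(\omega,D\,\cdot\,)\nu_{\Omega_Q}$ and the \emph{continuity} (rather than compactness, which fails for the Lam\'e system) of $(\mathcal{K}^{L,\omega})^*$ on $C^{0,\alpha}$; once granted, the periodic statement is essentially a corollary. The only periodic-specific points requiring care are the real analyticity of $R^{q}_{n,\omega}$ on a neighbourhood of $\overline{\Omega_Q}-\overline{\Omega_Q}$, including across the origin (which rests on $\overline{\Omega_Q}\subseteq Q$ and elliptic regularity for $L[\omega]$), and the bookkeeping needed to pass from a bounded fundamental region to the unbounded periodic sets, working in the bounded Schauder spaces $C^{k,\beta}_b$ and exploiting $Q$-periodicity.
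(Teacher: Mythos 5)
Your proposal is correct and follows what is essentially the same route as the proof in the cited reference \cite[Thm.~3.2]{DaMu14} (the paper itself only points to that reference): periodicity and smoothness away from $\partial\mathbb{S}_q[\Omega_Q]$ from Theorem~\ref{psperL}, the decomposition $\Gamma^q_{n,\omega}=\Gamma_{n,\omega}+R^q_{n,\omega}$ with $R^q_{n,\omega}$ real analytic near $\overline{\Omega_Q}-\overline{\Omega_Q}$ by elliptic regularity, the classical Schauder theory for the Lam\'e single-layer potential giving the jump relations and continuity of $(\mathcal{K}^{L,\omega})^*$, and finally $Q$-periodicity to pass from a bounded fundamental region to $\overline{\mathbb{S}_q[\Omega_Q]}$ and $\overline{\mathbb{S}_q[\Omega_Q]^-}$. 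You also correctly identify the Lam\'e-specific subtlety that only continuity (not compactness) of the adjoint boundary operator is available here.
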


We observe that periodic layer potentials for the Lam\'e system have been used to analyze perturbation problems in periodic domains, for example, in \cite{DaMiMu23, DaMu14, FaLuMu21}.

\section{The extension to parabolic equations: space-periodic layer potentials for the heat equation}\label{s:heat}

In this section, we aim to show that the periodic potential theory approach is not limited to elliptic equations and systems, but it can also be applied in a non-elliptic setting, such as for parabolic equations. Specifically, we consider the heat equation
\[
\partial_t u-\Delta u=0
\]
and present a periodic fundamental solution for it (for a proof see \cite[Thm.~1]{Lu20}).

\begin{theorem} \label{Phinqthm}
Let   the function $\Phi_{q,n}$   
from $(\mathbb{R}\times {\mathbb{R}}^{n})\setminus (\{0\} \times q \mathbb{Z}^n)$ to ${\mathbb{R}}$ be
defined by
\begin{equation} \label{phinq}
\Phi_{q,n}(t,x):=
\left\{
\begin{array}{ll} 
\sum_{z\in \mathbb{Z}^n}\frac{1}{(4\pi t)^{\frac{n}{2}} }e^{-\frac{|x+qz|^{2}}{4t}}&{\mathrm{if}}\ (t,x)\in \mathopen]0,+\infty\mathclose[\times{\mathbb{R}}^{n}\,, 
 \\
 0 &{\mathrm{if}}\ (t,x)\in (\mathopen]-\infty,0]\times{\mathbb{R}}^{n})\setminus (\{0\} \times q \mathbb{Z}^n)\,.
\end{array}
\right.
\end{equation}
 Then the following statements hold.
\begin{itemize}
\item[(i)] The generalized series in (\ref{phinq}) which defines $\Phi_{q,n}$ converges uniformly on the compact subsets of 
$\mathopen]0,+\infty\mathclose[ \times \mathbb{R}^n$.
\item[(ii)] Let $K$ be a compact subset of $\mathbb{R}^n$ such that $K \cap q\mathbb{Z}^n = \varnothing$. Then 
\[
\lim_{t \to 0^+} \Phi_{q,n}(t, x) = 0,
\]
uniformly with respect to $x \in K$.
\item[(iii)]$\Phi_{q,n}$ is {$Q$-periodic} and $\Phi_{q,n} \in C^{\infty}\left(({\mathbb{R}}\times {\mathbb{R}}^{n})\setminus (\{0\} \times q \mathbb{Z}^n)\right)$.
	Moreover, $\Phi_{q,n}$ solves the heat equation in $({\mathbb{R}}\times {\mathbb{R}}^{n})\setminus (\{0\} \times q \mathbb{Z}^n)$.
\item[(iv)] Let $f \in C^0(\mathbb{R}^n)$ such that $f$ is {$Q$-periodic}. Let $u$ be the function from $]0,+\infty[ \times \mathbb{R}^n$  to $\mathbb{C}$ 
defined by 
\[
u(t,x) := \int_Q \Phi_{q,n}(t,x-y)f(y)\,dy \qquad \forall\,(t,x) \in \mathopen]0,+\infty\mathclose[ \times \mathbb{R}^n.
\]
Then $u$ belongs to $C^\infty(\mathopen]0,+\infty\mathclose[ \times \mathbb{R}^n)$, solves the heat equation in $]0,+\infty[ \times \mathbb{R}^n$, 
 is {$Q$-periodic} and 
\[
\lim_{t \to 0^+} u(t,x)=f(x) \qquad \forall\,x \in  \mathbb{R}^n.
\]
\item[(v)] 
\[
\Phi_{q,n}(t,x)  = \sum_{z\in \mathbb{Z}^n}\frac{1}{|Q|_n} e^{-4\pi^2|q^{-1}z|^2t + 2\pi i (q^{-1}z) \cdot x} \qquad \forall\,(t,x) \in \mathopen]0, +\infty\mathclose[ \times \mathbb{R}^n\, .
\]
\end{itemize}
\end{theorem}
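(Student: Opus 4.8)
The plan is to prove the five items essentially in the order they are stated, since each builds on the previous. First I would establish item (i): for $(t,x)$ in a compact subset $K$ of $\mathopen]0,+\infty\mathclose[\times\mathbb{R}^n$, one has $t\ge t_0>0$ and $|x|\le R$, so for $|z|$ large the summand $\frac{1}{(4\pi t)^{n/2}}e^{-|x+qz|^2/(4t)}$ is dominated by a Gaussian-type bound $C e^{-c|z|^2}$ uniformly on $K$; since $\sum_{z\in\mathbb{Z}^n}e^{-c|z|^2}<+\infty$, the Weierstrass $M$-test gives uniform convergence on $K$. The same estimate, being uniform in $x$ on the slab $0<t\le 1$ once $x$ stays in a compact $K$ with $K\cap q\mathbb{Z}^n=\varnothing$, handles item (ii): for each $z$, $|x+qz|\ge d:=\operatorname{dist}(K,q\mathbb{Z}^n)>0$, so $e^{-|x+qz|^2/(4t)}\le e^{-d^2/(4t)}e^{-(|x+qz|^2-d^2)/(4t)}$, and summing in $z$ leaves a factor $e^{-d^2/(4t)}$ times a bounded series; letting $t\to0^+$ kills it. (Here one splits off finitely many $z$ near $K$ to control the remaining Gaussians, but this is routine.)

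For item (iii), $Q$-periodicity is immediate from reindexing the sum: $\Phi_{q,n}(t,x+qe_h)=\sum_z\frac{1}{(4\pi t)^{n/2}}e^{-|x+q(z+e_h)|^2/(4t)}=\Phi_{q,n}(t,x)$ since $z\mapsto z+e_h$ permutes $\mathbb{Z}^n$. Smoothness away from the singular set $\{0\}\times q\mathbb{Z}^n$ and the fact that $\Phi_{q,n}$ solves the heat equation there follow by differentiating the series term by term: each summand is the classical Gaussian heat kernel $(4\pi t)^{-n/2}e^{-|x+qz|^2/(4t)}$ translated by $qz$, hence a solution of $\partial_t u-\Delta u=0$ on $\mathopen]0,+\infty\mathclose[\times\mathbb{R}^n$; the uniform-on-compacta convergence of the differentiated series (same $M$-test argument, since derivatives of Gaussians still decay like $e^{-c|z|^2}$ times polynomials in $z$) justifies the interchange, and for $t\le0$ the function is identically $0$ away from the singular set, with item (ii) guaranteeing the two branches match continuously. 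Item (iv) is the assertion that convolving with $\Phi_{q,n}$ on the cell $Q$ reproduces the periodic initial datum: $u$ inherits smoothness and the heat equation from (iii) by differentiating under the integral sign over the bounded set $Q$; $Q$-periodicity follows since $\Phi_{q,n}(t,\cdot)$ is $Q$-periodic and $f$ is $Q$-periodic; and the initial condition $\lim_{t\to0^+}u(t,x)=f(x)$ follows by unfolding the sum — writing $\int_Q\Phi_{q,n}(t,x-y)f(y)\,dy=\sum_z\int_Q(4\pi t)^{-n/2}e^{-|x-y+qz|^2/(4t)}f(y)\,dy=\int_{\mathbb{R}^n}(4\pi t)^{-n/2}e^{-|x-y|^2/(4t)}f(y)\,dy$, using $Q$-periodicity of $f$ to glue the translated cells into all of $\mathbb{R}^n$ — which is exactly the classical heat semigroup applied to the bounded continuous $f$, and that converges to $f(x)$ as $t\to0^+$.

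Item (v) is the Fourier/Poisson-summation identity and is the one I expect to require the most care. The clean route is to apply the Poisson summation formula to $g(w):=(4\pi t)^{-n/2}e^{-|x+w|^2/(4t)}$ as a function of $w\in\mathbb{R}^n$, summed over the lattice $q\mathbb{Z}^n$: Poisson summation on a lattice $\Lambda=q\mathbb{Z}^n$ with dual lattice $\Lambda^*=2\pi q^{-1}\mathbb{Z}^n$ (using that $q$ is diagonal, so $|\det q|=|Q|_n$) gives $\sum_{z\in\mathbb{Z}^n}g(qz)=\frac{1}{|Q|_n}\sum_{z\in\mathbb{Z}^n}\widehat{g}(2\pi q^{-1}z)$, where $\widehat{g}$ is the Fourier transform in $w$. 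One then computes $\widehat{g}(\xi)=e^{i\xi\cdot x}e^{-t|\xi|^2}$ from the standard Gaussian Fourier transform (the Gaussian $(4\pi t)^{-n/2}e^{-|w|^2/(4t)}$ has Fourier transform $e^{-t|\xi|^2}$, and the shift $w\mapsto x+w$ contributes the factor $e^{i\xi\cdot x}$), and substituting $\xi=2\pi q^{-1}z$ yields exactly $\sum_z\frac{1}{|Q|_n}e^{-4\pi^2|q^{-1}z|^2 t+2\pi i(q^{-1}z)\cdot x}$. The hypotheses for Poisson summation — $g$ Schwartz, hence both sides absolutely and uniformly convergent for fixed $t>0$ — are clearly met. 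The only genuine obstacle is making sure one has the right normalization: getting the dual lattice of $q\mathbb{Z}^n$ to be $2\pi q^{-1}\mathbb{Z}^n$ (not $q^{-1}\mathbb{Z}^n$ or $2\pi(q^{-1})^t\mathbb{Z}^n$) and the covolume factor $1/|Q|_n$ correct; with $q$ diagonal this is bookkeeping, but it is the step where sign/factor-of-$2\pi$ errors creep in. Alternatively, one can verify (v) directly by noting that the right-hand side is a $Q$-periodic smooth solution of the heat equation whose $t\to0^+$ limit, computed against a test datum via (iv), agrees with that of the left-hand side, and invoke uniqueness — but the Poisson summation argument is more transparent and self-contained.
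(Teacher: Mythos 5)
Your plan follows what is almost certainly the standard route (the paper itself only cites \cite{Lu20}), and the arguments for (i), (ii), (iv), and (v) are essentially correct. In (v) the Poisson-summation bookkeeping does come out right: with the convention $\widehat g(\xi)=\int g(w)e^{-i\xi\cdot w}\,dw$, substitution $w\mapsto qw$ gives $\sum_{z}g(qz)=\frac{1}{|\det q|}\sum_z\widehat g(2\pi q^{-T}z)$, and since $q$ is diagonal $q^{-T}=q^{-1}$ and $|\det q|=|Q|_n$; combined with $\widehat g(\xi)=e^{i\xi\cdot x}e^{-t|\xi|^2}$ this reproduces the stated identity. The unfolding in (iv) to the whole-space Gaussian semigroup is also the right device and is clean.

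The one place where the argument, as written, is incomplete is item (iii). Proving that $\Phi_{q,n}\in C^\infty$ on $(\mathbb{R}\times\mathbb{R}^n)\setminus(\{0\}\times q\mathbb{Z}^n)$ requires more than that the two branches ``match continuously'' across $\{0\}\times(\mathbb{R}^n\setminus q\mathbb{Z}^n)$: you must show that every mixed partial $\partial_t^a\partial_x^\beta\Phi_{q,n}(t,x)$, computed for $t>0$ by term-by-term differentiation, also tends to $0$ as $t\to 0^+$ locally uniformly in $x$ on compact sets disjoint from $q\mathbb{Z}^n$. Item (ii) gives this only for the function itself. The fix is straightforward — each differentiated summand is of the form $(4\pi t)^{-n/2}e^{-|x+qz|^2/(4t)}P_{a,\beta}(1/t,\,x+qz)$ with $P_{a,\beta}$ a polynomial, and the exponential, bounded below in argument by $d^2/(4t)>0$ on such compacts, absorbs all the polynomially growing prefactors and the sum over $z$ — but it does need to be said; invoking continuity alone is not enough to place $\Phi_{q,n}$ in $C^\infty$ up to and across $t=0$.
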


If $\tilde{\Omega}$ is a subset of $\mathbb{R}^n$, we  set
\[
\tilde{\Omega}_{T}:= [0,T]\times \tilde{\Omega}\,,
\qquad
\partial_{T}\tilde{\Omega}:= (\partial \tilde{\Omega})_{T}=[0,T]\times\partial \tilde{\Omega}\,.
\]

Then we are in the position to introduce the {$Q$-periodic} in space  layer heat potentials.    Let $\alpha \in \mathopen]0,1[$ and $\Omega_Q$ be as in \eqref{OmegaQ_def}.  For a   density (or moment) $\mu$ 
in {$C^0(\partial_T\Omega_Q)$}, we set

\[
\mathcal{S}_q^{\mathrm{h}}[\partial_T\Omega_Q,\mu](t,x) 
 := \int_{0}^{t} \int_{\partial \Omega_Q} \Phi_{q,n}(t-\tau,x-y)  \mu(\tau, y)\,d\sigma_y d\tau \qquad \forall\,(t,x) \in [0,T] \times \mathbb{R}^n,
\]
 
\[
\mathcal{D}_q^{\mathrm{h}}[\partial_T\Omega_Q,\mu](t,x) := - \int_{0}^t\int_{\partial\Omega_Q}\nu_{\Omega_Q}(y) \cdot \nabla_x  
\Phi_{q,n}(t-\tau,x-y)\mu(\tau,y)\,d\sigma_yd\tau \qquad \forall\,(t,x) \in [0,T] \times \mathbb{R}^n,
\]
  
\[
\mathcal{K}_{q}^{\mathrm{h}}[\partial_T\Omega_Q,\mu](t,x) := - \int_{0}^t\int_{\partial\Omega_Q}\nu_{\Omega_Q}(y) \cdot \nabla_x
\Phi_{q,n}(t-\tau,x-y)\mu(\tau,y)\,d\sigma_yd\tau \qquad \forall\,(t,x) \in  \partial_T\Omega_Q,
\]

\[
\mathcal{K}_{q,\ast}^{\mathrm{h}}[\partial_T\Omega_Q,\mu](t,x) :=  \int_{0}^t\int_{\partial\Omega_Q}\nu_{\Omega_Q}(x) \cdot \nabla_x
\Phi_{q,n}(t-\tau,x-y)\mu(\tau,y)\,d\sigma_yd\tau \qquad \forall\,(t,x) \in  \partial_T\Omega_Q.
\]

The function $\mathcal{S}_q^{\mathrm{h}}[\partial_T\Omega_Q,\mu]$ is the {$Q$-periodic} in space single-layer heat potential with density $\mu$ and the function $\mathcal{D}_q^{\mathrm{h}}[\partial_T\Omega_Q,\mu]$ is the {$Q$-periodic} in space double-layer heat potential with density $\mu$.

{
\begin{remark}
Note that in a more general definition of parabolic layer heat potentials the integration with respect to the time variable $t$ is done on the unbounded interval $\mathopen]-\infty,t\mathclose[$ instead of $]0,t[$ (see \cite{LaLu17,LaLu19}). However, when considering initial-boundary value problem for heat equation with homogeneous initial condition,  the above definition is enough.
\end{remark}
}

As in the elliptic case, our functions are also framed within the context of Schauder spaces. We refer to Lady\v{z}enskaja,  Solonnikov, and Ural'ceva \cite{LaSoUr68} for the definition of 
time-dependent functions  in the parabolic Schauder class $C^{\frac{j+\alpha}{2};j+\alpha}$ on $[0,T]\times \overline{\mathcal{O}}$ or $[0,T]\times \partial\mathcal{O}$, where $\mathcal{O}$ is an open set {and $j \in \{0,1\}$}.  In other words, a function of class $C^{\frac{j+\alpha}{2};j+\alpha}$ is 
$\left(\frac{j+\alpha}{2}\right)$-H\"older continuous in the time variable, and $(j,\alpha)$-Schauder regular in the space variable. We also denote by  $C_0^{\frac{j+\alpha}{2};j+\alpha}$ the parabolic Schauder class of functions vanishing at time $t=0$, and by $C_{0,q}^{\frac{j+\alpha}{2};j+\alpha}$ the subspace of $C_0^{\frac{j+\alpha}{2};j+\alpha}$ consisting of functions {$f$ such that $f(t,\cdot)$ is  $Q$-periodic for every $t \in [0,T]$ (in this case, for brevity, we say that $f$ is $Q$-periodic in space).} Clearly, we can extend the definition of parabolic Schauder classes to products of intervals and manifolds by using local charts.

  In the following theorem of \cite[Thm.~3]{Lu20} we collect some properties of the periodic double-layer potential for the heat equation.

\begin{theorem} \label{thmdl}
Let $\alpha \in \mathopen]0,1[$, $T>0$. Let $\Omega_Q$ be as in \eqref{OmegaQ_def}. Then the following statements hold.
\begin{itemize}
\item[(i)] If $\mu \in {C^0(\partial_T\Omega_Q)}$, then $\mathcal{D}_q^{\mathrm{h}}[\partial_T\Omega_Q, \mu]$  is {$Q$-periodic} in space, 
$\mathcal{D}_q^{\mathrm{h}}[\partial_T\Omega_Q,\mu] \in  C^\infty(]0,T]\times (\mathbb{R}^n \setminus \partial\mathbb{S}_q[\Omega_Q]))$, and $\mathcal{D}_q^{\mathrm{h}}[\partial_T\Omega_Q,\mu]$ solves the heat equation in 
$]0,T]\times (\mathbb{R}^n \setminus \partial\mathbb{S}_q[\Omega_Q])$.
\item[(ii)]   If $\mu \in C^{\frac{1+\alpha}{2};1+\alpha}_0(\partial_T \Omega_Q)$,  then the restriction $\mathcal{D}_q^{\mathrm{h}}[\partial_T\Omega_Q, \mu]_{|\mathbb{S}_q[\Omega_Q]_T}$ can be 
extended uniquely to an element $\mathcal{D}_q^{\mathrm{h},+}[\partial_T\Omega_Q, \mu] \in C_{0,q}^{\frac{1+\alpha}{2};1+\alpha}(\overline{\mathbb{S}_q[\Omega_Q]_T})$ and 
\[
\mathcal{D}_q^{\mathrm{h},+}[\partial_T\Omega_Q, \mu](t,x) = - \frac{1}{2} \mu(t,x) + \mathcal{K}_q^{\mathrm{h}}[\partial_T\Omega_Q, \mu](t,x) \qquad {\forall (t,x) \in \partial_T\Omega_Q\, .}
\]
Moreover, the operator from $C^{\frac{1+\alpha}{2};1+\alpha}_0(\partial_T \Omega_Q)$ to $C_{0,q}^{\frac{1+\alpha}{2};1+\alpha}(\overline{\mathbb{S}_q[\Omega_Q]_T})$, 
that takes $\mu$ to the function $\mathcal{D}_q^{\mathrm{h},+}[\partial_T\Omega_Q, \mu]$, is linear and continuous. 
\item[(iii)]   If $\mu \in C^{\frac{1+\alpha}{2};1+\alpha}_0(\partial_T \Omega_Q)$,  then the restriction $\mathcal{D}_q^{\mathrm{h}}[\partial_T\Omega_Q, \mu]_{|\mathbb{S}_q[\Omega_Q]^-_T}$ can be 
extended uniquely to an element $\mathcal{D}_q^{\mathrm{h},-}[\partial_T\Omega_Q, \mu] \in C_{0,q}^{\frac{1+\alpha}{2};1+\alpha}(\overline{\mathbb{S}_q[\Omega_Q]^-_T})$ and 
\[
\mathcal{D}_q^{\mathrm{h},-}[\partial_T\Omega_Q, \mu](t,x) = \frac{1}{2} \mu(t,x) + \mathcal{K}_q^{\mathrm{h}}[\partial_T\Omega_Q, \mu](t,x),
\]
for all $(t,x) \in \partial_T\Omega_Q$.
Moreover, the operator from $C^{\frac{1+\alpha}{2};1+\alpha}_0(\partial_T \Omega_Q)$ to $C_{0,q}^{\frac{1+\alpha}{2};1+\alpha}(\overline{\mathbb{S}_q[\Omega_Q]^-_T})$, 
that takes $\mu$ to the function $\mathcal{D}_q^{\mathrm{h},-}[\partial_T\Omega_Q, \mu]$, is linear and continuous. 
\end{itemize}
\end{theorem}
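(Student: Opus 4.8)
The plan is to reduce the three items to the corresponding (classical, non-periodic) properties of the double-layer heat potential, exploiting that the periodic heat kernel $\Phi_{q,n}$ differs from the classical one only by a smooth kernel on the region where the singularity of the potential lives. Write $\Gamma_n(t,x):=(4\pi t)^{-n/2}e^{-|x|^2/(4t)}$ for $t>0$ and $\Gamma_n(t,x):=0$ for $t\le 0$ for the classical fundamental solution of $\partial_t-\Delta$; let $\mathcal{D}^{\mathrm{h}}[\partial_T\Omega_Q,\mu]$ be the non-periodic double-layer heat potential obtained by replacing $\Phi_{q,n}$ with $\Gamma_n$ in the definition of $\mathcal{D}_q^{\mathrm{h}}[\partial_T\Omega_Q,\mu]$, and let $\mathcal{K}^{\mathrm{h}}[\partial_T\Omega_Q,\mu]$ be its (direct value) restriction to $\partial_T\Omega_Q$. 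From \eqref{phinq} we have $\Phi_{q,n}=\Gamma_n+R_{q,n}$, where $R_{q,n}(t,x):=\sum_{z\in\mathbb{Z}^n\setminus\{0\}}\Gamma_n(t,x+qz)$ for $t>0$ and $R_{q,n}:=0$ for $t\le 0$. Since $\overline{\Omega_Q}\subseteq Q$, the difference set $\overline{\Omega_Q}-\overline{\Omega_Q}$ is a compact subset of the open box $\prod_{h=1}^n\mathopen]-q_{hh},q_{hh}[$ and hence meets $q\mathbb{Z}^n$ only at $0$; thus, fixing a bounded open neighbourhood $W$ of $\overline{\Omega_Q}$ with $\overline{W}\subseteq Q$ small enough, Theorem \ref{Phinqthm} shows that $R_{q,n}$ is of class $C^\infty$ on a neighbourhood of $[0,T]\times(\overline{W}-\partial\Omega_Q)$, up to and including $t=0$. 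Therefore, for $(t,x)\in[0,T]\times\overline{W}$,
\[
\mathcal{D}_q^{\mathrm{h}}[\partial_T\Omega_Q,\mu](t,x)=\mathcal{D}^{\mathrm{h}}[\partial_T\Omega_Q,\mu](t,x)+\mathcal{R}[\mu](t,x)\,,
\]
where $\mathcal{R}$ is the integral operator with the $C^\infty$ kernel $-\nu_{\Omega_Q}(y)\cdot\nabla_x R_{q,n}(t-\tau,x-y)$, which vanishes for $t\le\tau$. By the $Q$-periodicity of $\Phi_{q,n}$ the analogous splitting holds near every translate $qz+\overline{\Omega_Q}$.

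Item (i) is then the routine part. The $Q$-periodicity in space of $\mathcal{D}_q^{\mathrm{h}}[\partial_T\Omega_Q,\mu]$ is inherited directly from that of $\Phi_{q,n}$ (Theorem \ref{Phinqthm}(iii)). If $x\notin\partial\mathbb{S}_q[\Omega_Q]$ then $x-y\notin q\mathbb{Z}^n$ for every $y\in\partial\Omega_Q$ (otherwise $x$ would lie on the boundary of a translate of $\Omega_Q$), so the kernel $(\tau,y)\mapsto\nu_{\Omega_Q}(y)\cdot\nabla_x\Phi_{q,n}(t-\tau,x-y)$ and all its $(t,x)$-derivatives are continuous on the compact set $[0,t]\times\partial\Omega_Q$ and extend continuously by $0$ at $\tau=t$ thanks to Theorem \ref{Phinqthm}(ii). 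Differentiating under the integral sign and using that $\Phi_{q,n}$ solves the heat equation off $\{0\}\times q\mathbb{Z}^n$ (Theorem \ref{Phinqthm}(iii)), one gets $\mathcal{D}_q^{\mathrm{h}}[\partial_T\Omega_Q,\mu]\in C^\infty(]0,T]\times(\mathbb{R}^n\setminus\partial\mathbb{S}_q[\Omega_Q]))$ and that it solves the heat equation there.

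For items (ii) and (iii) I would invoke the classical mapping and jump properties of the double-layer heat potential over the $C^{1,\alpha}$ boundary $\partial\Omega_Q$: for $\mu\in C_0^{\frac{1+\alpha}{2};1+\alpha}(\partial_T\Omega_Q)$, the restriction of $\mathcal{D}^{\mathrm{h}}[\partial_T\Omega_Q,\mu]$ to $[0,T]\times\overline{\Omega_Q}$ extends to an element of $C_0^{\frac{1+\alpha}{2};1+\alpha}([0,T]\times\overline{\Omega_Q})$ with trace $-\tfrac12\mu+\mathcal{K}^{\mathrm{h}}[\partial_T\Omega_Q,\mu]$ on $\partial_T\Omega_Q$, its restriction to the closure of $[0,T]\times(\mathbb{R}^n\setminus\overline{\Omega_Q})$ extends to a function of the corresponding Schauder class with trace $+\tfrac12\mu+\mathcal{K}^{\mathrm{h}}[\partial_T\Omega_Q,\mu]$, and both extension operators are linear and continuous in $\mu$; this is the parabolic analogue of the elliptic jump formulas, for which I would refer to the Schauder theory of Lady\v{z}enskaja, Solonnikov, and Ural'ceva \cite{LaSoUr68} and to \cite{Lu20} (the hypothesis $\mu(0,\cdot)=0$ being exactly what forces the potential and its trace to vanish at $t=0$ and lie in the ``$0$'' classes). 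Since $\mathcal{R}$ has a $C^\infty$ kernel vanishing for $t\le\tau$, the map $\mu\mapsto\mathcal{R}[\mu]$ sends $C_0^{\frac{1+\alpha}{2};1+\alpha}(\partial_T\Omega_Q)$ continuously into $C^\infty$ functions vanishing at $t=0$; it adds the same quantity $\mathcal{R}[\mu]$ to each one-sided trace and, on $\partial_T\Omega_Q$, to $\mathcal{K}^{\mathrm{h}}[\partial_T\Omega_Q,\mu]$, so that $\mathcal{K}^{\mathrm{h}}[\partial_T\Omega_Q,\mu]+\mathcal{R}[\mu]_{|\partial_T\Omega_Q}=\mathcal{K}_q^{\mathrm{h}}[\partial_T\Omega_Q,\mu]$ and no jump is produced. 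Adding the two pieces gives the extensions $\mathcal{D}_q^{\mathrm{h},\pm}[\partial_T\Omega_Q,\mu]$ together with the jump formulas of (ii)--(iii) near $\overline{\Omega_Q}$; by the periodicity from (i) these hold near every translate $qz+\overline{\Omega_Q}$, so the extensions belong to $C_{0,q}^{\frac{1+\alpha}{2};1+\alpha}(\overline{\mathbb{S}_q[\Omega_Q]_T})$ and $C_{0,q}^{\frac{1+\alpha}{2};1+\alpha}(\overline{\mathbb{S}_q[\Omega_Q]^-_T})$ respectively, and $\mu\mapsto\mathcal{D}_q^{\mathrm{h},\pm}[\partial_T\Omega_Q,\mu]$ is linear and continuous. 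Uniqueness of the extensions is clear, since a function in these Schauder classes is determined by its values on the dense open subset.

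The crux is the classical input used in (ii)--(iii). In contrast with the elliptic case, the kernel $\nu_{\Omega_Q}(y)\cdot\nabla_x\Gamma_n(t-\tau,x-y)$ is genuinely singular as $(\tau,y)\to(t,x)$ along $\partial_T\Omega_Q$, so both the existence of the one-sided boundary limits (the jump relations) and the regularity of the extended potential up to the lateral boundary, in the anisotropic parabolic Schauder scale $C^{\frac{1+\alpha}{2};1+\alpha}$, require a careful estimate of the potential in parabolic coordinates and genuinely exploit the compatibility condition $\mu(0,\cdot)=0$. In the write-up I would carry this out following \cite{Lu20} (and, for the underlying parabolic layer-potential calculus, \cite{LaSoUr68,LaLu17}) rather than reproving it here.
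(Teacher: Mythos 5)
Your proof is correct, and it takes the standard route: the periodic heat kernel $\Phi_{q,n}$ is decomposed as the classical Gaussian kernel $\Gamma_n$ plus a remainder $R_{q,n}=\sum_{z\neq 0}\Gamma_n(\cdot,\cdot+qz)$ that is $C^\infty$ on a neighbourhood of $[0,T]\times(\overline{W}-\partial\Omega_Q)$ precisely because $\overline{\Omega_Q}-\overline{\Omega_Q}$ avoids $q\mathbb{Z}^n\setminus\{0\}$; item (i) then follows by differentiation under the integral sign, and items (ii)--(iii) by superposing the classical one-sided jump relations for the heat double-layer potential in the anisotropic class $C_0^{\frac{1+\alpha}{2};1+\alpha}$ with the non-jumping contribution of the smoothing operator $\mathcal{R}$, and then transporting the result to every translate by periodicity. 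The paper itself offers no proof here but simply cites \cite[Thm.~3]{Lu20}, and the argument in that reference is of exactly this type (periodic kernel = classical kernel + smooth remainder, then reduce to the classical parabolic potential calculus of \cite{LaSoUr68,LaLu17}), so your proposal is essentially the same as the one the paper relies on.
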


Concerning the periodic single-layer potential, we have instead the following  (see \cite[Thm.~2]{Lu20}).

\begin{theorem}\label{thmsl}
Let $\alpha \in \mathopen]0,1[$, $T>0$. Let $\Omega_Q$ be as in \eqref{OmegaQ_def}. Then the following statements hold.
\begin{itemize}

\item[(i)]  If $\mu \in {C^0(\partial_T\Omega_Q)}$, then $\mathcal{S}_{q}^{\mathrm{h}}[\partial_T\Omega_Q,\mu]$  is continuous, {$Q$-periodic} in space, 
$\mathcal{S}_{q}^{\mathrm{h}}[\partial_T\Omega_Q,\mu] \in C^\infty(]0,T] \times (\mathbb{R}^n \setminus \partial\mathbb{S}_q[\Omega_Q]))$ 
and $\mathcal{S}_{q}^{\mathrm{h}}[\partial_T\Omega_Q,\mu]$ solves the heat equation 
in $]0,T]\times (\mathbb{R}^n \setminus \partial\mathbb{S}_q[\Omega_Q])$. 
%
%
\item[(ii)] If $\mu \in C_0^{\frac{\alpha}{2}; \alpha}(\partial_T\Omega_Q)$ and we denote  by $\mathcal{S}_{q}^{\mathrm{h},+}[\partial_T\Omega_Q,\mu]$  
the restriction of $\mathcal{S}_{q}^{\mathrm{h}}[\partial_T\Omega_Q,\mu]$ to $\overline{\mathbb{S}_q[\Omega_Q]_T}$, then $\mathcal{S}_{q}^{\mathrm{h},+}[\partial_T\Omega_Q,\mu]\in C_{0,q}^{\frac{1+\alpha}{2}; 1+\alpha}(\overline{\mathbb{S}_q[\Omega_Q]_T})$ and  the map from  $C_0^{\frac{\alpha}{2}; \alpha}(\partial_T\Omega_Q)$ to  $C_{0,q}^{\frac{1+\alpha}{2}; 1+\alpha}(\overline{\mathbb{S}_q[\Omega_Q]_T})$ that takes $\mu$ to $\mathcal{S}_{q}^{\mathrm{h},+}[\partial_T\Omega_Q,\mu]$ is linear and continuous. Moreover, 
\[
\nu_{\Omega_Q}(x)\cdot \nabla_x \mathcal{S}_q^{\mathrm{h},+}[\partial_T\Omega_Q, \mu](t,x) =  \frac{1}{2}\mu(t,x)
 +\mathcal{K}_{q,\ast}^{\mathrm{h}}[\partial_T\Omega_Q,\mu](t,x) \qquad {\forall (t,x) \in \partial_T\Omega_Q\, .}
 \]

\item[(iii)] If $\mu \in C_0^{\frac{\alpha}{2}; \alpha}(\partial_T\Omega_Q)$ and we denote  by $\mathcal{S}_{q}^{\mathrm{h},-}[\partial_T\Omega_Q,\mu]$  
the restriction of $\mathcal{S}_{q}^{\mathrm{h}}[\partial_T\Omega_Q,\mu]$ to $\overline{\mathbb{S}_q[\Omega_Q]^-_T}$, then $\mathcal{S}_{q}^{\mathrm{h},-}[\partial_T\Omega_Q,\mu]\in C_{0,q}^{\frac{1+\alpha}{2}; 1+\alpha}(\overline{\mathbb{S}_q[\Omega_Q]^-_T})$ and  the map from  $C_0^{\frac{\alpha}{2}; \alpha}(\partial_T\Omega_Q)$ to  $C_{0,q}^{\frac{1+\alpha}{2}; 1+\alpha}(\overline{\mathbb{S}_q[\Omega_Q]^-_T})$, that takes $\mu$ to $\mathcal{S}_{q}^{\mathrm{h},-}[\partial_T\Omega_Q,\mu]$, is linear and continuous. Moreover, 
\[
\nu_{\Omega_Q}(x)\cdot \nabla_x\mathcal{S}_q^{\mathrm{h},-}[\partial_T\Omega_Q, \mu](t,x) =  - \frac{1}{2}\mu(t,x)
 +\mathcal{K}_{q,\ast}^{\mathrm{h}}[\partial_T\Omega_Q,\mu](t,x),
 \]
for all $(t,x) \in \partial_T\Omega_Q$.
%
%
%
\end{itemize}
\end{theorem}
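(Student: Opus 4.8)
The plan is to reduce the whole statement to the classical (non-periodic) parabolic potential theory by exploiting the geometric assumption $\overline{\Omega_Q}\subseteq Q$ together with the $Q$-periodicity in space of $\mathcal{S}_q^{\mathrm h}[\partial_T\Omega_Q,\mu]$. First I would split $\Phi_{q,n}(t,x)=\Phi_n(t,x)+\widetilde\Phi_{q,n}(t,x)$, where $\Phi_n(t,x):=(4\pi t)^{-n/2}e^{-|x|^2/(4t)}$ for $t>0$ (and $0$ for $t\le 0$) is the usual heat kernel, i.e.\ the $z=0$ summand in \eqref{phinq}, and $\widetilde\Phi_{q,n}$ collects the terms with $z\in\mathbb Z^n\setminus\{0\}$. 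By Theorem \ref{Phinqthm}(i)--(ii) together with the exponential Gaussian decay of each summand and of all its derivatives, $\widetilde\Phi_{q,n}$ extends to an element of $C^\infty\bigl((\mathbb R\times\mathbb R^n)\setminus(\{0\}\times(q\mathbb Z^n\setminus\{0\}))\bigr)$; in particular it is jointly smooth, down to and including $t=0$, on a neighbourhood of $\{0\}\times\{0\}$. Since $\overline{\Omega_Q}\subseteq Q$, the set $\{x-y:x,y\in\overline{\Omega_Q}\}$ is a compact subset of $\prod_{j}\mathopen]-q_{jj},q_{jj}\mathclose[$ meeting $q\mathbb Z^n$ only at the origin, so I would fix a bounded open set $B$ with $\overline{\Omega_Q}\subseteq B$ and with $\overline B-\overline{\Omega_Q}$ still disjoint from $q\mathbb Z^n\setminus\{0\}$. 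On $[0,T]\times\overline B$ one then has
\[
\mathcal{S}_q^{\mathrm h}[\partial_T\Omega_Q,\mu]=\mathcal{S}^{\mathrm h}[\partial_T\Omega_Q,\mu]+\mathcal{R}[\mu],\qquad \mathcal{R}[\mu](t,x):=\int_0^t\!\!\int_{\partial\Omega_Q}\widetilde\Phi_{q,n}(t-\tau,x-y)\mu(\tau,y)\,d\sigma_y\,d\tau,
\]
with $\mathcal{S}^{\mathrm h}$ the classical single-layer heat potential and with the kernel of $\mathcal R$ jointly $C^\infty$ (all derivatives vanishing as $t-\tau\to0^+$). Hence $\mu\mapsto\mathcal R[\mu]$ is a smoothing operator: it maps $C^0(\partial_T\Omega_Q)$, a fortiori $C_0^{\frac\alpha2;\alpha}(\partial_T\Omega_Q)$, linearly and continuously into functions that are $C^\infty$ on $[0,T]\times\overline B$ and vanish at $t=0$, so it will contribute nothing to the singular or jump behaviour.

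For statement (i), away from $\partial\mathbb S_q[\Omega_Q]$ I would differentiate under the integral sign: this is legitimate because, for $x$ at positive distance from $\partial\mathbb S_q[\Omega_Q]$ and $y\in\partial\Omega_Q$, the map $(\tau,y)\mapsto\Phi_{q,n}(t-\tau,x-y)$ and all its $x$-derivatives are continuous up to $\tau=t$ (Theorem \ref{Phinqthm}(ii)--(iii)), so $\mathcal{S}_q^{\mathrm h}[\partial_T\Omega_Q,\mu]\in C^\infty(\mathopen]0,T]\times(\mathbb R^n\setminus\partial\mathbb S_q[\Omega_Q]))$ and, since $\Phi_{q,n}$ solves the heat equation there, so does $\mathcal{S}_q^{\mathrm h}[\partial_T\Omega_Q,\mu]$. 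Continuity on all of $[0,T]\times\mathbb R^n$ is a local question: near $\partial\mathbb S_q[\Omega_Q]$ I would translate by a suitable element of $q\mathbb Z^n$ so as to land near $\partial\Omega_Q$ — this uses the $Q$-periodicity in space of $\mathcal{S}_q^{\mathrm h}[\partial_T\Omega_Q,\mu]$, which follows from the $Q$-periodicity of $\Phi_{q,n}$ (Theorem \ref{Phinqthm}(iii)) — and then invoke the decomposition above and the classical continuity of $\mathcal S^{\mathrm h}[\partial_T\Omega_Q,\mu]$ for continuous densities.

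For statements (ii) and (iii) I would again use periodicity to reduce to a neighbourhood of $\overline{\Omega_Q}$: the set $B$ contains $\overline{\Omega_Q}$ and a full exterior collar of $\partial\Omega_Q$, and on $[0,T]\times\overline B$ the decomposition $\mathcal{S}_q^{\mathrm h}=\mathcal S^{\mathrm h}+\mathcal R$ holds. The classical theory of the single-layer heat potential (Lady\v{z}enskaja, Solonnikov and Ural'ceva \cite{LaSoUr68}, and \cite[Thm.~2]{Lu20}) then gives that the restriction of $\mathcal S^{\mathrm h}[\partial_T\Omega_Q,\mu]$ to the interior, resp.\ to the exterior, side of $\partial\Omega_Q$ belongs to $C^{\frac{1+\alpha}{2};1+\alpha}$ when $\mu\in C_0^{\frac\alpha2;\alpha}(\partial_T\Omega_Q)$, with linear and continuous dependence on $\mu$, together with the jump relations $\nu_{\Omega_Q}\cdot\nabla_x\mathcal S^{\mathrm h,\pm}[\partial_T\Omega_Q,\mu]=\pm\tfrac12\mu+\mathcal K^{\mathrm h}_{*}[\partial_T\Omega_Q,\mu]$ on $\partial_T\Omega_Q$, with the sign fixed by the normalization of $\Phi_{q,n}$. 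Since $\mathcal R[\mu]$ is $C^\infty$ up to $\partial\Omega_Q$ from both sides, with no jump in its normal derivative and with boundary normal-derivative contribution exactly $\mathcal K^{\mathrm h}_{q,*}[\partial_T\Omega_Q,\mu]-\mathcal K^{\mathrm h}_{*}[\partial_T\Omega_Q,\mu]$, adding it upgrades these identities to the stated ones with the periodic operator $\mathcal K^{\mathrm h}_{q,*}$. Away from $\partial\mathbb S_q[\Omega_Q]$ part (i) already gives $C^\infty$ regularity, so patching the local pieces and using periodicity once more yields $\mathcal{S}_q^{\mathrm h,\pm}[\partial_T\Omega_Q,\mu]\in C_{0,q}^{\frac{1+\alpha}{2};1+\alpha}$ on $\overline{\mathbb S_q[\Omega_Q]_T}$, resp.\ on $\overline{\mathbb S_q[\Omega_Q]^-_T}$; membership in the subspace vanishing at $t=0$ is immediate, the time integral over $\mathopen]0,t\mathclose[$ being empty at $t=0$, and linearity and continuity of the maps $\mu\mapsto\mathcal{S}_q^{\mathrm h,\pm}[\partial_T\Omega_Q,\mu]$ follow from those of the classical maps and of $\mathcal R$.

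The hard part will not be the periodic reduction, which is elementary once $\overline{\Omega_Q}\subseteq Q$ has been used to peel off a smooth remainder, but the classical input it is reduced to: the Schauder regularity gain $C_0^{\frac\alpha2;\alpha}\to C_0^{\frac{1+\alpha}{2};1+\alpha}$ and the normal-derivative jump formula for the non-periodic single-layer heat potential. These require delicate estimates on the parabolic kernel and on the mapping properties of the singular integral operator $\mathcal K^{\mathrm h}_{*}$ (equivalently $\mathcal K^{\mathrm h}_{q,*}$), and I would quote them from \cite{LaSoUr68} and \cite[Thm.~2]{Lu20} rather than reprove them here.
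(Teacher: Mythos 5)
Your decomposition $\Phi_{q,n}=\Phi_n+\widetilde\Phi_{q,n}$ of the periodic heat kernel into the classical (free-space) heat kernel plus a smooth periodic remainder, followed by the reduction $\mathcal{S}_q^{\mathrm h}=\mathcal{S}^{\mathrm h}+\mathcal{R}$ with $\mathcal{R}$ a smoothing operator near $\overline{\Omega_Q}$ and the use of $Q$-periodicity to cover the rest of the domain, is correct and is exactly the strategy the paper delegates to \cite[Thm.~2]{Lu20}; your observation that $\overline{\Omega_Q}\subseteq Q$ keeps $\overline{B}-\overline{\Omega_Q}$ away from $q\mathbb{Z}^n\setminus\{0\}$ is the key geometric fact making $\mathcal{R}$ smooth. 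The only flaw is a circular citation: you quote \cite[Thm.~2]{Lu20} as the source of the \emph{classical} Schauder regularity gain and jump relation for the non-periodic single-layer heat potential, but that is precisely the periodic theorem you are trying to prove; the classical ingredients you need are instead available in \cite{LaSoUr68} and in the non-periodic regularizing results of \cite{LaLu17,LaLu19}.
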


\subsection{A regularly perturbed boundary value problem}\label{ss:regheat}

In this section, we want to show the application of periodic potential theory for the heat equation to the study of a regularly perturbed boundary value problem. More precisely, we will consider a Dirichlet problem for the heat equation in the product of a bounded interval with a periodically perforated domain, where the holes are deformed copies of a reference shape. We observe that, in \cite[\S 5]{DaLuMoMu24}, the periodic single-layer has been used to study a perturbed periodic transmission problem for the heat equation, whereas here we will study the Dirichlet problem using the periodic double-layer potential.

So, for $\alpha\in\mathopen]0,1[$, we assume that 
\begin{equation}\label{condOmega}
\begin{split}
&\Omega\text{ is a bounded connected open subset of } \mathbb{R}^n\  \text{of class } C^{1,\alpha}\\
& \text {and has connected exterior }\mathbb{R}^n\setminus\overline{\Omega}\,. 
\end{split}   
\end{equation}
The set $\Omega$ will play the role of a reference shape. In order to define regular domain perturbations, we consider specific classes of diffeomorphisms defined on the boundary $\partial \Omega$ of $\Omega$.

We  denote by $\mathcal{A}^{1,\alpha}_{\partial \Omega}$ the set of functions of class $C^{1,\alpha}(\partial\Omega, \mathbb{R}^{n})$ that are injective together with their differential at all points  of $\partial\Omega$. By Lanza de Cristoforis and Rossi \cite[Lem. 2.2, p. 197]{LaRo08} and \cite[Lem. 2.5, p. 143]{LaRo04}, we know that $\mathcal{A}^{1,\alpha}_{\partial \Omega}$ is an open subset of $C^{1,\alpha}(\partial\Omega, \mathbb{R}^{n})$.

For $\phi \in \mathcal{A}^{1,\alpha}_{\partial \Omega}$, by the Jordan-Leray separation theorem, $\mathbb{R}^{n}\setminus \phi(\partial \Omega)$ has exactly two open connected components {(see, e.g.,  \cite[\S A.4]{DaLaMu21}).} We denote the bounded connected component of $\mathbb{R}^{n}\setminus \phi(\partial \Omega)$ by $\mathbb{I}[\phi]$ and by $\nu_{\mathbb{I}[\phi]}$  the outer unit normal to $\mathbb{I}[\phi]$.
 
We also set 
\[
{\mathcal{A}}_{\partial\Omega,Q}^{1,\alpha} := \left\{\phi \in\mathcal{A}_{\partial \Omega}^{1,\alpha} : \phi(\partial\Omega) \subseteq Q\right\}, 
\]
and we define
\[
\mathbb{S}_q[\phi]^-:= \mathbb{S}_q[\mathbb{I}[\phi]]^-
\]
for all $\phi \in {\mathcal{A}}_{\partial\Omega,Q}^{1,\alpha}$.


%

We now introduce the perturbed Dirichlet problem we wish to study. Before doing so, we need to introduce some notation: if $D$ is a subset of $\mathbb{R}^n$,
$T >0$ and $h$ is a map from $D$ to $\mathbb{R}^n$, we denote by $h^T$ the map from  $[0,T] \times D$
 to  $[0,T] \times \mathbb{R}^n$ defined by 
\[
h^T(t,x) := (t, h(x)) \qquad \forall (t,x) \in  [0,T] \times D.
\]
Now, let $\alpha \in \mathopen]0,1[$, $T>0$, and $\Omega$ be as in assumption \eqref{condOmega}. Let  $\phi \in {\mathcal{A}}_{\partial\Omega,Q}^{1,\alpha}$.
Let  $f \in C^{\frac{1+\alpha}{2};1+\alpha}_0([0,T]\times\partial \Omega)$. {The $\phi$-dependent Dirichlet problem we consider is the following:}

\begin{align}\label{periodicdirheat}
\left\{
\begin{array}{ll}
\partial_t u - \Delta u = 0 &\mbox{in }  ]0, T]\times \mathbb{S}_q[\phi]^-, \\
u(t,x+qz) = u(t,x) &\forall\,(t,x) \in  [0,T] \times   \overline{\mathbb{S}_q[\phi]^-}, \, \forall\,z\in \mathbb{Z}^n, \\
u= f \circ (\phi^T)^{(-1)}&\mbox{on } [0, T]\times \partial \mathbb{I}[\phi],\\
u(0,\cdot) = 0 &\mbox{in }  \overline{ \mathbb{S}_q[\phi]^-}.
\end{array}
\right.
\end{align}

In the proposition below, we show that problem \eqref{periodicdirheat} has a unique solution and that it can be represented by a double-layer potential whose density solves a specific boundary integral equation.

\begin{proposition}\label{thm:solheat}
Let $\alpha\in\mathopen]0,1[$ and $T>0$. Let $\Omega$ be as in assumption \eqref{condOmega}.
Let  $\phi \in {\mathcal{A}}_{\partial\Omega,Q}^{1,\alpha}$.
Let  $f \in C^{\frac{1+\alpha}{2};1+\alpha}_0([0,T]\times\partial \Omega)$. Then problem \eqref{periodicdirheat} has
a unique solution
\[
u[\phi] \in C^{\frac{1+\alpha}{2};1+\alpha}_{0,q}([0,T]\times\overline{\mathbb{S}_q[\phi]^-}).
\]
Moreover,
\[
 u[\phi]=\mathcal{D}_q^{\mathrm{h},-}[\partial_T\mathbb{I}[\phi], \theta[\phi] \circ (\phi^T)^{(-1)}],
\]
where $\theta[\phi]$  is the unique solution in $C^{\frac{1+\alpha}{2};1+\alpha}_{0}([0,T] \times \partial\Omega)$ of the  integral equation
\begin{equation}\label{sys1}
 \frac{1}{2}\theta \circ (\phi^T)^{(-1)} + \mathcal{K}_q^{\mathrm{h}}[\partial_T\mathbb{I}[\phi], \theta \circ (\phi^T)^{(-1)}]=f \circ (\phi^T)^{(-1)}.
\end{equation}
 \end{proposition}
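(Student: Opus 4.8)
The plan is to use the potential-theoretic method: to represent the solution of \eqref{periodicdirheat} as a $Q$-periodic double-layer heat potential whose density is supported on $\partial_T\mathbb{I}[\phi]$, and to recover that density by inverting a boundary integral operator which, after a change of variables, lives on the fixed boundary $\partial\Omega$. I begin by observing that $\mathbb{I}[\phi]$ is a bounded open set of class $C^{1,\alpha}$ with $\overline{\mathbb{I}[\phi]}\subseteq Q$, so it may play the role of $\Omega_Q$ in \eqref{OmegaQ_def} and Theorem \ref{thmdl} applies with $\Omega_Q=\mathbb{I}[\phi]$. Since $\phi\in{\mathcal{A}}_{\partial\Omega,Q}^{1,\alpha}$ is a $C^{1,\alpha}$ diffeomorphism of $\partial\Omega$ onto $\phi(\partial\Omega)=\partial\mathbb{I}[\phi]$, composition with $(\phi^T)^{(-1)}$ is a linear homeomorphism of $C^{\frac{1+\alpha}{2};1+\alpha}_0([0,T]\times\partial\Omega)$ onto $C^{\frac{1+\alpha}{2};1+\alpha}_0(\partial_T\mathbb{I}[\phi])$; hence, writing $\mu:=\theta\circ(\phi^T)^{(-1)}$, equation \eqref{sys1} is equivalent to
\[
\tfrac12\mu+\mathcal{K}_q^{\mathrm h}[\partial_T\mathbb{I}[\phi],\mu]=f\circ(\phi^T)^{(-1)}\qquad\text{on }\partial_T\mathbb{I}[\phi],
\]
and the unique solvability of \eqref{sys1} in $C^{\frac{1+\alpha}{2};1+\alpha}_0([0,T]\times\partial\Omega)$ is equivalent to the invertibility of the map $\mu\mapsto\tfrac12\mu+\mathcal{K}_q^{\mathrm h}[\partial_T\mathbb{I}[\phi],\mu]$ on $C^{\frac{1+\alpha}{2};1+\alpha}_0(\partial_T\mathbb{I}[\phi])$.

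The core of the argument --- and the step I expect to be the main obstacle --- is precisely this invertibility, which I would prove in two parts. First, Fredholmness of index zero: the $Q$-periodic fundamental solution $\Phi_{q,n}$ differs from the classical heat kernel by the tail $\sum_{z\neq0}(4\pi t)^{-n/2}e^{-|x+qz|^2/4t}$, which, together with its derivatives, extends smoothly to $t=0$ with value $0$ on compact sets avoiding $q\mathbb{Z}^n$; consequently $\mathcal{K}_q^{\mathrm h}[\partial_T\mathbb{I}[\phi],\cdot]$ differs from the classical non-periodic heat double-layer operator on $\partial_T\mathbb{I}[\phi]$ by an operator with $C^\infty$ kernel, and, $\partial\mathbb{I}[\phi]$ being of class $C^{1,\alpha}$, the mapping properties of the heat double layer in the parabolic Schauder setting (see \cite{Lu20}, and also \cite{LaLu17,LaLu19}) give that $\mu\mapsto\tfrac12\mu+\mathcal{K}_q^{\mathrm h}[\partial_T\mathbb{I}[\phi],\mu]$ is Fredholm of index zero on $C^{\frac{1+\alpha}{2};1+\alpha}_0(\partial_T\mathbb{I}[\phi])$. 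Second, injectivity: if $\tfrac12\mu+\mathcal{K}_q^{\mathrm h}[\partial_T\mathbb{I}[\phi],\mu]=0$, then $v:=\mathcal{D}_q^{\mathrm h,-}[\partial_T\mathbb{I}[\phi],\mu]$ solves the heat equation in $]0,T]\times\mathbb{S}_q[\phi]^-$, is $Q$-periodic in space, vanishes at $t=0$, and, by Theorem \ref{thmdl}(iii), has zero trace on $\partial_T\mathbb{I}[\phi]$; a standard energy estimate on the fundamental domain $Q\setminus\overline{\mathbb{I}[\phi]}$, where the contributions on opposite faces of $\partial Q$ cancel by periodicity, gives $v\equiv0$. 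Using that the normal derivative of the double-layer heat potential is continuous across $\partial_T\mathbb{I}[\phi]$ (cf.~\cite{Lu20,LaLu17}), the function $w:=\mathcal{D}_q^{\mathrm h,+}[\partial_T\mathbb{I}[\phi],\mu]$ has zero Neumann datum on $\partial_T\mathbb{I}[\phi]$ and zero initial datum, hence $w\equiv0$ in $]0,T]\times\mathbb{S}_q[\mathbb{I}[\phi]]$ by the analogous energy estimate for the interior Neumann problem; subtracting the two jump relations in Theorem \ref{thmdl}(ii)--(iii) then gives $\mu=\mathcal{D}_q^{\mathrm h,-}[\partial_T\mathbb{I}[\phi],\mu]-\mathcal{D}_q^{\mathrm h,+}[\partial_T\mathbb{I}[\phi],\mu]=v-w=0$ on $\partial_T\mathbb{I}[\phi]$. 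Fredholmness of index zero together with injectivity yields bijectivity, and the open mapping theorem gives that the map is a homeomorphism; in particular \eqref{sys1} has a unique solution $\theta[\phi]$ in $C^{\frac{1+\alpha}{2};1+\alpha}_0([0,T]\times\partial\Omega)$.

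It then remains to check that $\theta[\phi]$ produces the solution and that the solution is unique. Setting $u[\phi]:=\mathcal{D}_q^{\mathrm h,-}[\partial_T\mathbb{I}[\phi],\theta[\phi]\circ(\phi^T)^{(-1)}]$, Theorem \ref{thmdl}(i) gives that $u[\phi]$ is $Q$-periodic in space and solves $\partial_t u-\Delta u=0$ in $]0,T]\times\mathbb{S}_q[\phi]^-$, while Theorem \ref{thmdl}(iii) gives $u[\phi]\in C^{\frac{1+\alpha}{2};1+\alpha}_{0,q}([0,T]\times\overline{\mathbb{S}_q[\phi]^-})$, so $u[\phi](0,\cdot)=0$, and the boundary trace of $u[\phi]$ on $\partial_T\mathbb{I}[\phi]$ equals $\tfrac12\mu+\mathcal{K}_q^{\mathrm h}[\partial_T\mathbb{I}[\phi],\mu]=f\circ(\phi^T)^{(-1)}$ with $\mu=\theta[\phi]\circ(\phi^T)^{(-1)}$; thus $u[\phi]$ solves \eqref{periodicdirheat}. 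Finally, if $u_1$ and $u_2$ are two solutions in $C^{\frac{1+\alpha}{2};1+\alpha}_{0,q}([0,T]\times\overline{\mathbb{S}_q[\phi]^-})$, their difference solves the homogeneous version of \eqref{periodicdirheat}, and the same energy estimate on $Q\setminus\overline{\mathbb{I}[\phi]}$ used above forces $u_1=u_2$. This establishes both the well-posedness and the representation formula.
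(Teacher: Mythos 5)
Your argument is a genuine re-derivation of a result that the paper proves by direct citation: the paper's proof consists of invoking \cite[Thm.~6]{Lu20} (together with \cite[Thm.~3 and Lem.~4]{Lu20} and Theorem~\ref{thmdl}), which already packages existence, uniqueness, the double-layer representation, and the solvability of \eqref{sys1}. So you take a genuinely different route --- you prove what the citation would give --- and the overall reduction (represent the solution as a double-layer potential, pull the integral equation back to the fixed boundary $\partial\Omega$, invert the boundary operator) is the correct skeleton.

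There are, however, two places in your invertibility argument that deserve attention. First, the Fredholm-plus-injectivity scheme is the elliptic paradigm; for heat-layer potentials the Volterra structure of $\mathcal{K}_q^{\mathrm h}$ (the time integration runs only from $0$ to $t$) is the standard and cleaner route: the iterates of $\mathcal{K}_q^{\mathrm h}$ on $C_0^{\frac{1+\alpha}{2};1+\alpha}(\partial_T\mathbb{I}[\phi])$ have norms that decay to zero, so $\tfrac12 I + \mathcal{K}_q^{\mathrm h}$ is invertible via a Neumann series, with no need for a separate index computation or injectivity argument. This is what \cite[Lem.~4]{Lu20} encapsulates, and the paper's Proposition~\ref{prop:repsolheat} also relies on it. Second, and more substantively, your injectivity step uses that the normal derivative of $\mathcal{D}_q^{\mathrm h,\pm}[\mu]$ is continuous across $\partial_T\mathbb{I}[\phi]$ --- a parabolic Lyapunov--Tauber theorem --- citing \cite{Lu20,LaLu17}. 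Those references establish the jump relations and regularizing/tangential-derivative properties in the parabolic Schauder setting, but I do not think they give the pointwise Lyapunov--Tauber statement you invoke: on a $C^{1,\alpha}$ boundary with a density only of class $C^{\frac{1+\alpha}{2};1+\alpha}$, the normal derivative of the double-layer potential need not even be classically defined on the boundary, so the deduction ``$w$ has zero Neumann datum, hence $w\equiv 0$'' is a gap as written. The Volterra argument sidesteps this entirely, which is one reason it is the route of choice for parabolic layer potentials.
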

 \begin{proof}
 By \cite[Thm.~6]{Lu20}, we know that problem \eqref{periodicdirheat} has
a unique solution in $C^{\frac{1+\alpha}{2};1+\alpha}_{0,q}([0,T]\times\overline{\mathbb{S}_q[\phi]^-})$ and that such a solution can be represented as the double-layer potential $\mathcal{D}_q^{\mathrm{h},-}[\partial_T\mathbb{I}[\phi], \theta[\phi] \circ (\phi^T)^{(-1)}]$ where $\theta[\phi]$ is the unique solution in $C^{\frac{1+\alpha}{2};1+\alpha}_{0}([0,T] \times \partial\Omega)$ of  equation \eqref{sys1} (see also \cite[Thm.~3 and Lem.~4]{Lu20} and Theorem \ref{thmdl}).
 \end{proof}
 
 Our aim is to study the dependence of the `domain-to-solution' map $\phi \mapsto u[\phi]$. By Proposition \ref{thm:solheat}, problem \eqref{periodicdirheat} is equivalent to the integral equation \eqref{sys1}. As a consequence, a preliminary step is to study the regularity of the map $\phi \mapsto \theta[\phi]$, where $\theta[\phi]$ is the unique solution of \eqref{sys1}.
 We do so in the following proposition.
 
 
 \begin{proposition}\label{prop:repsolheat}
Let $\alpha\in\mathopen]0,1[$ and $T>0$. Let $\Omega$ be as in assumption \eqref{condOmega}.
Let  $f \in C^{\frac{1+\alpha}{2};1+\alpha}_0([0,T]\times\partial \Omega)$. Then the map from   ${\mathcal{A}}_{\partial\Omega,Q}^{1,\alpha}$ to $C^{\frac{1+\alpha}{2};1+\alpha}_{0}([0,T] \times \partial\Omega)$, that takes $\phi$ to the unique solution $\theta[\phi]$ in $C^{\frac{1+\alpha}{2};1+\alpha}_{0}([0,T] \times \partial\Omega)$ of the  integral equation
\eqref{sys1}, is of class $C^\infty$.
 \end{proposition}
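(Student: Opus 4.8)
The plan is to follow the \emph{Functional Analytic Approach}: rewrite the boundary integral equation \eqref{sys1}, which lives on the $\phi$-dependent set $\partial_T\mathbb{I}[\phi]$, as an equation on the \emph{fixed} domain $[0,T]\times\partial\Omega$, encode it as the zero set of a single map $\Lambda$, show that $\Lambda$ is $C^\infty$ and that its partial differential with respect to the density is invertible, and then invoke the Implicit Function Theorem in Banach spaces.

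First I would pull back \eqref{sys1} by the diffeomorphism $\phi$. Using $\phi$ to parametrize $\partial\mathbb{I}[\phi]$, the change of variables in the surface integral introduces a Jacobian factor $\widetilde{\sigma}[\phi]$ (the pulled-back area element, which is nowhere zero and depends analytically on $\phi\in\mathcal{A}^{1,\alpha}_{\partial\Omega}$), and $\nu_{\mathbb{I}[\phi]}\circ\phi$ is expressed explicitly through the tangential derivatives of $\phi$. Writing the density as $\theta\circ(\phi^T)^{(-1)}$ and evaluating \eqref{sys1} at $(t,\phi(s))$, the equation becomes $\tfrac12\theta(t,s)+\widetilde{\mathcal K}[\phi,\theta](t,s)=f(t,s)$ for all $(t,s)\in[0,T]\times\partial\Omega$, where
\[
\widetilde{\mathcal K}[\phi,\theta](t,s):=-\int_0^t\!\!\int_{\partial\Omega}(\nu_{\mathbb{I}[\phi]}\circ\phi)(\eta)\cdot\nabla_x\Phi_{q,n}\bigl(t-\tau,\phi(s)-\phi(\eta)\bigr)\,\widetilde{\sigma}[\phi](\eta)\,\theta(\tau,\eta)\,d\sigma_\eta\,d\tau .
\]
I then set $\Lambda[\phi,\theta]:=\tfrac12\theta+\widetilde{\mathcal K}[\phi,\theta]-f$, a map from $\mathcal A^{1,\alpha}_{\partial\Omega,Q}\times C^{\frac{1+\alpha}{2};1+\alpha}_{0}([0,T]\times\partial\Omega)$ into $C^{\frac{1+\alpha}{2};1+\alpha}_{0}([0,T]\times\partial\Omega)$. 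By Proposition \ref{thm:solheat}, for each $\phi$ the equation $\Lambda[\phi,\theta]=0$ has the unique solution $\theta[\phi]$, so the zero set of $\Lambda$ is the graph of $\phi\mapsto\theta[\phi]$, and it suffices to check the hypotheses of the Implicit Function Theorem along this graph.

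The substantial step is to show that $\Lambda$ is of class $C^\infty$ (in fact real analytic). Since $\Lambda$ is affine in $\theta$, this reduces to proving that $\phi\mapsto\widetilde{\mathcal K}[\phi,\cdot]$ is $C^\infty$ as a map into $\mathcal L\bigl(C^{\frac{1+\alpha}{2};1+\alpha}_{0}([0,T]\times\partial\Omega)\bigr)$; the fact that $\widetilde{\mathcal K}[\phi,\cdot]$ maps this space into itself follows by undoing the pull-back and applying Theorem \ref{thmdl} together with the smoothness of the composition/pull-back operators induced by $\phi^T$ on parabolic Schauder spaces. The obstacle here is the singularity of the kernel $\nabla_x\Phi_{q,n}(t-\tau,\cdot)$ along the parabolic diagonal $\{\tau=t,\ \eta=s\}$. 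The remedy is to split $\Phi_{q,n}$ via Theorem \ref{Phinqthm}: the $z=0$ term is the classical non-periodic heat kernel, whose pulled-back double-layer operator depends analytically on $\phi$ by the known parabolic potential-theory results (see \cite{LaLu17,LaLu19} and the arguments in \cite{DaLuMoMu24}, the parabolic analogue of the elliptic analyticity results in \cite{DaLaMu21,LaRo08}); the remaining sum, by Theorem \ref{Phinqthm}(i)--(ii) and $\overline{\Omega}\subseteq Q$, is smooth and $Q$-periodic near the relevant arguments and contributes an analytic dependence by elementary differentiation under the integral sign.

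Finally, since $\widetilde{\mathcal K}[\phi,\theta]$ is linear in $\theta$, $d_\theta\Lambda[\phi_0,\theta_0]=\tfrac12\,\mathrm{Id}+\widetilde{\mathcal K}[\phi_0,\cdot]$ is independent of $\theta_0$; undoing the pull-back it is conjugate, through the Schauder-space isomorphism $\psi\mapsto\psi\circ(\phi_0^T)^{(-1)}$, to $\tfrac12\,\mathrm{Id}+\mathcal K_q^{\mathrm h}[\partial_T\mathbb{I}[\phi_0],\cdot]$. This operator is a linear homeomorphism of $C^{\frac{1+\alpha}{2};1+\alpha}_{0}(\partial_T\mathbb{I}[\phi_0])$ onto itself: it is bijective by the unique solvability in Proposition \ref{thm:solheat} (equivalently \cite[Thm.~6, Lem.~4]{Lu20}), and either the open mapping theorem or the Volterra-in-time structure of $\mathcal K_q^{\mathrm h}$ gives the continuity of the inverse. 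Hence $d_\theta\Lambda[\phi_0,\theta[\phi_0]]$ is invertible, and the Implicit Function Theorem for $C^\infty$ maps in Banach spaces provides a neighborhood of $\phi_0$ on which $\phi\mapsto\theta[\phi]$ is $C^\infty$; as $\phi_0\in\mathcal A^{1,\alpha}_{\partial\Omega,Q}$ is arbitrary and $\theta[\phi]$ is uniquely determined, the map is $C^\infty$ on all of $\mathcal A^{1,\alpha}_{\partial\Omega,Q}$. I expect the main difficulty to be the third paragraph — the analytic dependence on $\phi$ of the pulled-back double-layer heat operator, with its careful treatment of the parabolic singular kernel and the associated Hölder mapping estimates.
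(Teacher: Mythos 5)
Your proposal is correct and follows essentially the same route as the paper: pull \eqref{sys1} back to the fixed cylinder $[0,T]\times\partial\Omega$, establish $C^\infty$ dependence of the pulled-back double-layer heat operator on $\phi$, verify invertibility of $\tfrac12\mathbf{I}+\mathbf{K}_q^{\mathrm h}[\phi,\cdot]$, and conclude. The only stylistic difference is that where you invoke the Implicit Function Theorem, the paper exploits the affine-in-$\theta$ structure more directly by writing $\theta[\phi]=\bigl(\tfrac12\mathbf{I}+\mathbf{K}_q^{\mathrm h}[\phi,\cdot]\bigr)^{(-1)}f$ and using the real-analyticity of operator inversion, citing \cite[Lem.~4]{Lu20} and \cite[Thm.~4.3]{DaLuMoMu24} for the invertibility and $C^\infty$ dependence you rightly flag as the substantial step (your parenthetical ``in fact real analytic'' overclaims slightly, since those references only give $C^\infty$, but this is immaterial for the statement).
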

 \begin{proof}
 We first note that equation \eqref{sys1} can be rewritten as
 \begin{equation}\label{eq:repsolheat1}
  \frac{1}{2}\theta  + \mathcal{K}_q^{\mathrm{h}}[\partial_T\mathbb{I}[\phi], \theta \circ (\phi^T)^{(-1)}]\circ (\phi^T)=f\, . 
 \end{equation}
 Then for each $\phi \in {\mathcal{A}}_{\partial\Omega,Q}^{1,\alpha}$, we introduce the bounded linear operator   
 \[
 \mathbf{K}_q^{\mathrm{h}}[\phi,\cdot] \in \mathcal{L}(C^{\frac{1+\alpha}{2};1+\alpha}_{0}([0,T] \times \partial\Omega), C^{\frac{1+\alpha}{2};1+\alpha}_{0}([0,T] \times \partial\Omega))
 \] 
 by setting
 \[
  \mathbf{K}_q^{\mathrm{h}}[\phi,\theta]:=\mathcal{K}_q^{\mathrm{h}}[\partial_T\mathbb{I}[\phi], \theta \circ (\phi^T)^{(-1)}]\circ (\phi^T)
 \]
 for all $\theta \in C^{\frac{1+\alpha}{2};1+\alpha}_{0}([0,T] \times \partial\Omega)$. As a consequence, we can rewrite equation \eqref{eq:repsolheat1} as 
 \[
   \frac{1}{2}\theta  +   \mathbf{K}_q^{\mathrm{h}}[\phi,\theta]=f\, . 
 \]
 By \cite[Lem.~4]{Lu20} and \cite[Thm.~4.3]{DaLuMoMu24}, we know that for each $\phi \in {\mathcal{A}}_{\partial\Omega,Q}^{1,\alpha}$ the linear and continuous operator  
 \[
    \frac{1}{2}\mathbf{I}  +   \mathbf{K}_q^{\mathrm{h}}[\phi,\cdot]
 \]
from $C^{\frac{1+\alpha}{2};1+\alpha}_{0}([0,T] \times \partial\Omega)$ to itself  is invertible and that the map from ${\mathcal{A}}_{\partial\Omega,Q}^{1,\alpha}$ to $ \mathcal{L}(C^{\frac{1+\alpha}{2};1+\alpha}_{0}([0,T] \times \partial\Omega), C^{\frac{1+\alpha}{2};1+\alpha}_{0}([0,T] \times \partial\Omega))$ that takes $\phi$ to $  \frac{1}{2}\mathbf{I}  +   \mathbf{K}_q^{\mathrm{h}}[\phi,\cdot]$ is of class $C^\infty$. Since the map that takes a linear invertible operator to its inverse is real analytic (cf.~Hille and Phillips \cite[Thms. 4.3.2 and 4.3.4]{HiPh57}), and therefore of class $C^\infty$, we also deduce that  the map from ${\mathcal{A}}_{\partial\Omega,Q}^{1,\alpha}$ to $ \mathcal{L}(C^{\frac{1+\alpha}{2};1+\alpha}_{0}([0,T] \times \partial\Omega), C^{\frac{1+\alpha}{2};1+\alpha}_{0}([0,T] \times \partial\Omega))$ that takes $\phi$ to 
\[
\bigg(\frac{1}{2}\mathbf{I}  +   \mathbf{K}_q^{\mathrm{h}}[\phi,\cdot]\bigg)^{(-1)}
\]
is of class $C^\infty$. Since for each $\phi \in {\mathcal{A}}_{\partial\Omega,Q}^{1,\alpha}$ we have
\[
\theta[\phi]=\bigg(\frac{1}{2}\mathbf{I}  +   \mathbf{K}_q^{\mathrm{h}}[\phi,\cdot]\bigg)^{(-1)} f\, ,
\]
we finally deduce that the map from   ${\mathcal{A}}_{\partial\Omega,Q}^{1,\alpha}$ to $C^{\frac{1+\alpha}{2};1+\alpha}_{0}([0,T] \times \partial\Omega)$, that takes $\phi$ to  $\theta[\phi]$,  is of class $C^\infty$.
 \end{proof}

To understand the dependence of the double-layer potential representing the solution upon $\phi$, we need the following technical lemma, which states that the map related to the change of variables in the area element and the pullback $\nu_{\mathbb{I}[\phi]}\circ\phi$ of the outer normal field depend analytically on $\phi$. For a  proof, we refer to Lanza de Cristoforis and Rossi \cite[p.~166]{LaRo04} and Lanza de Cristoforis \cite[Prop. 1]{La07}.

\begin{lemma}\label{rajacon}
Let $\alpha\in \mathopen ]0,1[$ and  $\Omega$ be as in assumption  \eqref{condOmega}.   Then the following statements hold.
\begin{itemize}
\item[(i)] For each $\phi \in \mathcal{A}^{1,\alpha}_{\partial \Omega}$, there exists a unique  
$\tilde \sigma_n[\phi] \in C^{0,\alpha}(\partial\Omega)$ such that $\tilde \sigma_n[\phi] > 0$ and 
\[ 
  \int_{\phi(\partial\Omega)}w(s)\,d\sigma_s=  \int_{\partial\Omega}w \circ \phi(y)\tilde\sigma_n[\phi](y)\,d\sigma_y, \qquad \forall w \in L^1(\phi(\partial\Omega)).
\]
Moreover, the map $\tilde \sigma_n[\cdot]$  is real analytic from $\mathcal{A}_{\partial \Omega}^{1,\alpha}  $ to $ C^{0,\alpha}(\partial\Omega)$.
\item[(ii)] The map from $\mathcal{A}_{\partial \Omega}^{1,\alpha} $ to $ C^{0,\alpha}(\partial\Omega, \mathbb{R}^{n})$, that takes $\phi$ to $\nu_{\mathbb{I}[\phi]} \circ \phi$, is real analytic.
\end{itemize}
\end{lemma}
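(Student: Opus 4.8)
The plan is to use that both assertions are \emph{local} on $\partial\Omega$ and that, in local coordinates, the surface element and the pull-back of the normal are explicit algebraic expressions in the first-order data of $\phi$; real analyticity then comes from the Banach-algebra structure of the Schauder space $C^{0,\alpha}$. So I would fix once and for all a finite atlas of $C^{1,\alpha}$ parametrizations $\gamma_j\colon\overline{V_j}\to\partial\Omega$ with $V_j\subseteq\mathbb{R}^{n-1}$ open, bounded and connected, chosen positively oriented in the sense that $(\partial_1\gamma_j,\dots,\partial_{n-1}\gamma_j,\nu_\Omega\circ\gamma_j)$ is a positively oriented basis of $\mathbb{R}^n$ at every point. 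Recall that a function lies in $C^{0,\alpha}(\partial\Omega)$ (resp.\ $C^{0,\alpha}(\partial\Omega,\mathbb{R}^n)$) if and only if each local representative $u\circ\gamma_j$ lies in $C^{0,\alpha}(\overline{V_j})$, with equivalent norms. The basic building block is that precomposition with the fixed chart $\gamma_j$ is bounded linear from $C^{1,\alpha}(\partial\Omega,\mathbb{R}^n)$ to $C^{1,\alpha}(\overline{V_j},\mathbb{R}^n)$, so $\phi\mapsto D(\phi\circ\gamma_j)$ is bounded linear — in particular real analytic — from $C^{1,\alpha}(\partial\Omega,\mathbb{R}^n)$ to $C^{0,\alpha}(\overline{V_j},\mathbb{R}^{n\times(n-1)})$. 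Throughout, for a $C^{1,\alpha}$ map $h\colon\overline{V_j}\to\mathbb{R}^n$ I write $J_h:=\sqrt{\det(D h^{t}\,D h)}$ for the $(n-1)$-dimensional Jacobian.

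For part (i), a partition of unity subordinate to the atlas reduces the asserted change-of-variables identity to the pointwise relation
\[
\tilde\sigma_n[\phi]\circ\gamma_j \;=\; J_{\phi\circ\gamma_j}\,/\,J_{\gamma_j}\qquad \text{on } V_j\,.
\]
I would check that this defines a single function $\tilde\sigma_n[\phi]\in C^{0,\alpha}(\partial\Omega)$: consistency on overlaps follows from the change-of-chart transformation law for Jacobians, uniqueness from the fact that if $\sigma_1,\sigma_2$ both realize the identity then $\int_{\partial\Omega}v\,(\sigma_1-\sigma_2)\,d\sigma=0$ for every continuous $v$ (take $w:=v\circ\phi^{(-1)}$), and positivity from the hypothesis $\phi\in\mathcal{A}^{1,\alpha}_{\partial\Omega}$, which makes $D(\phi\circ\gamma_j)$ have full rank so that $J_{\phi\circ\gamma_j}>0$. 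Real analyticity of $\tilde\sigma_n[\cdot]$ then follows by composing real analytic maps: $A\mapsto\det(A^{t}A)$ is a polynomial, hence real analytic from $C^{0,\alpha}(\overline{V_j},\mathbb{R}^{n\times(n-1)})$ to the Banach algebra $C^{0,\alpha}(\overline{V_j})$; precomposing with $\phi\mapsto D(\phi\circ\gamma_j)$ gives a real analytic map whose values on $\mathcal{A}^{1,\alpha}_{\partial\Omega}$ lie in the open cone of functions bounded away from $0$; on that cone $u\mapsto\sqrt{u}$ is a real analytic superposition operator into $C^{0,\alpha}(\overline{V_j})$ (the standard Schauder-space fact used also in \cite{LaRo04,La07}); and finally division by the fixed strictly positive element $J_{\gamma_j}\in C^{0,\alpha}(\overline{V_j})$ is bounded linear. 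Reassembling over $j$ gives that $\tilde\sigma_n[\cdot]$ is real analytic from $\mathcal{A}^{1,\alpha}_{\partial\Omega}$ to $C^{0,\alpha}(\partial\Omega)$.

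For part (ii), in the chart $\gamma_j$ the tangent hyperplane to $\phi(\partial\Omega)$ at $\phi(\gamma_j(\xi))$ is spanned by the $n-1$ columns of $D(\phi\circ\gamma_j)(\xi)$, and their generalized cross product $\mathfrak{n}[\phi](\xi)$ — the vector whose $i$-th component is $(-1)^{i+1}$ times the minor obtained by deleting the $i$-th row of $D(\phi\circ\gamma_j)(\xi)$ — is a multilinear, hence polynomial, function of $D(\phi\circ\gamma_j)(\xi)$. Therefore $\phi\mapsto\mathfrak{n}[\phi]$ is real analytic from $C^{1,\alpha}(\partial\Omega,\mathbb{R}^n)$ to $C^{0,\alpha}(\overline{V_j},\mathbb{R}^n)$, and since $|\mathfrak{n}[\phi]|=J_{\phi\circ\gamma_j}$ is bounded away from $0$ on $\mathcal{A}^{1,\alpha}_{\partial\Omega}$, the normalized field $\phi\mapsto\mathfrak{n}[\phi]/|\mathfrak{n}[\phi]|$ is again real analytic into $C^{0,\alpha}(\overline{V_j},\mathbb{R}^n)$ (same Banach-algebra/superposition argument, now with $t\mapsto t^{-1/2}$). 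It remains to identify $\nu_{\mathbb{I}[\phi]}\circ\phi\circ\gamma_j$ with $\varepsilon_j(\phi)\,\mathfrak{n}[\phi]/|\mathfrak{n}[\phi]|$ for a sign $\varepsilon_j(\phi)\in\{-1,+1\}$ that is independent of $\xi\in V_j$ (because $V_j$ is connected, $\mathfrak{n}[\phi]$ never vanishes, and $\nu_{\mathbb{I}[\phi]}$ is continuous) and, crucially, \emph{locally constant} in $\phi$: for $\phi$ in a small $C^{1,\alpha}$-neighbourhood of a fixed $\phi_0$, the bounded component $\mathbb{I}[\phi]$ of $\mathbb{R}^n\setminus\phi(\partial\Omega)$ provided by the Jordan--Leray separation theorem is a small $C^{1,\alpha}$-perturbation of $\mathbb{I}[\phi_0]$, so its outer normal at $\phi(\gamma_j(\xi))$ stays close to that of $\mathbb{I}[\phi_0]$ and the sign cannot jump. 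Since real analyticity is a local notion, multiplying a real analytic map by a locally constant sign keeps it real analytic, and reassembling over $j$ completes the proof.

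I expect the only genuinely delicate point to be the geometric input behind part (ii): that $\mathbb{I}[\phi]$, and hence the outer unit normal, varies continuously with $\phi$ in the $C^{1,\alpha}$-topology, which is what lets one fix the orientation sign $\varepsilon_j(\phi)$ locally. This relies on the Jordan--Leray separation theorem (cf.\ \cite[\S A.4]{DaLaMu21}) together with a stability argument for embeddings, and is exactly the content worked out by Lanza de Cristoforis and Rossi \cite[p.~166]{LaRo04} and Lanza de Cristoforis \cite[Prop.~1]{La07}. Everything else is routine modulo the standard facts that $C^{0,\alpha}(\partial\Omega)$ is a Banach algebra, that polynomial maps between Banach spaces are real analytic, and that $t\mapsto\sqrt{t}$ and $t\mapsto t^{-1/2}$ induce real analytic superposition operators on the positive cone of $C^{0,\alpha}$.
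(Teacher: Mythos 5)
Your proposal is correct, and it is a faithful reconstruction of the argument that the paper itself does not spell out: for Lemma~\ref{rajacon} the paper simply refers the reader to Lanza de Cristoforis and Rossi \cite[p.~166]{LaRo04} and Lanza de Cristoforis \cite[Prop.~1]{La07}, and the approach in those references is exactly the one you describe (local charts, explicit polynomial formulas for the area Jacobian and the vector product of the tangent frame, the Banach-algebra structure of $C^{0,\alpha}$ together with real analyticity of the superposition operators induced by $t\mapsto t^{1/2}$ and $t\mapsto t^{-1/2}$ on the positive cone, and a sign/orientation argument based on the Jordan--Leray separation theorem). You also correctly isolate the one non-routine step, the local constancy in $\phi$ of the orientation sign, which is precisely the geometric content worked out in the cited sources. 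One small remark: the lemma as printed claims $\tilde\sigma_n[\phi]\in C^{1,\alpha}(\partial\Omega)$, but, as your computation $\tilde\sigma_n[\phi]\circ\gamma_j=J_{\phi\circ\gamma_j}/J_{\gamma_j}$ shows, for $\phi\in C^{1,\alpha}$ one only gets $\tilde\sigma_n[\phi]\in C^{0,\alpha}(\partial\Omega)$ (consistent with the stated codomain of the analytic map); the ``$C^{1,\alpha}$'' in the existence clause appears to be a typo for ``$C^{0,\alpha}$''.
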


Now, we can combine the representation of the solution $u[\phi]$ as a double-layer potential and the $C^\infty$-regularity of $\theta[\phi]$ upon the (infinite-dimensional) perturbation parameter $\phi$ to prove that suitable restrictions of $u[\phi]$ depend smoothly upon $\phi$. 

\begin{theorem}
Let $\alpha\in\mathopen]0,1[$ and $T>0$. Let $\Omega$ be as in assumption  \eqref{condOmega}. Let  $\Omega_0$ be a bounded open subset of $\mathbb{R}^n$. Let ${\mathcal{B}}_{\partial\Omega,Q}^{1,\alpha}$
be the open subset of ${\mathcal{A}}_{\partial\Omega,Q}^{1,\alpha}$ consisting of those diffeomorphisms $\phi$
such that
\[
\overline{\Omega_0} \subseteq \mathbb{S}_q[\phi]^-.
\]
 Then the map 
 \[
 \phi  \mapsto u[\phi]_{|[0,T]\times\overline{\Omega_0}}
 \]
 is of class $C^\infty$ from ${\mathcal{B}}_{\partial\Omega,Q}^{1,\alpha}$ to  $C_0^{\frac{1+\alpha}{2},1+\alpha}([0,T]\times \overline{\Omega_0})$.
\end{theorem}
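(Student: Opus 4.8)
The plan is to start from the integral representation $u[\phi]=\mathcal{D}_q^{\mathrm{h},-}[\partial_T\mathbb{I}[\phi],\theta[\phi]\circ(\phi^T)^{(-1)}]$ of Proposition~\ref{thm:solheat}, to rewrite it by pulling everything back to the fixed boundary $\partial\Omega$, and then to deduce the $C^\infty$-dependence on $\phi$ by combining the $C^\infty$-regularity of $\phi\mapsto\theta[\phi]$ from Proposition~\ref{prop:repsolheat}, the real analyticity of the geometric quantities in Lemma~\ref{rajacon}, and the smoothness of a suitable family of integral operators whose kernel is built from $\Phi_{q,n}$. Before that, one should note that ${\mathcal{B}}_{\partial\Omega,Q}^{1,\alpha}$ is open: if $\overline{\Omega_0}\subseteq\mathbb{S}_q[\phi_0]^-$ then, since $\overline{\Omega_0}$ is compact and $\overline{\mathbb{S}_q[\phi_0]}$ is closed, we have $\mathrm{dist}(\overline{\Omega_0},\overline{\mathbb{S}_q[\phi_0]})>0$; since a small $C^{1,\alpha}$-perturbation of $\phi_0$ changes $\phi(\partial\Omega)=\partial\mathbb{I}[\phi]$ (hence $\partial\mathbb{S}_q[\phi]$) only slightly in the Hausdorff distance, the inclusion $\overline{\Omega_0}\subseteq\mathbb{S}_q[\phi]^-$ is preserved in a neighbourhood of $\phi_0$.

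Pulling back the integral in $\mathcal{D}_q^{\mathrm{h},-}$ by the change of variables $y=\phi(\tilde y)$ on $\partial\Omega$ and using Lemma~\ref{rajacon}, we get, for $(t,x)\in[0,T]\times\overline{\Omega_0}$,
\[
u[\phi](t,x)=-\int_0^t\!\!\int_{\partial\Omega}\big(\nu_{\mathbb{I}[\phi]}\circ\phi\big)(\tilde y)\cdot\nabla_x\Phi_{q,n}\big(t-\tau,\,x-\phi(\tilde y)\big)\,\theta[\phi](\tau,\tilde y)\,\tilde\sigma_n[\phi](\tilde y)\,d\sigma_{\tilde y}\,d\tau.
\]
The key point is now geometric: for $x\in\overline{\Omega_0}$, $\tilde y\in\partial\Omega$ and $z\in\mathbb{Z}^n$ one has $x-\phi(\tilde y)-qz\neq0$ (otherwise $x$ would lie on $\partial\mathbb{S}_q[\phi]$, contradicting $x\in\mathbb{S}_q[\phi]^-$); and since $\overline{\Omega_0}-\phi(\partial\Omega)$ is bounded — so that only finitely many $z$ enter — the distance estimate of the previous paragraph shows that the points $\big(t-\tau,\,x-\phi(\tilde y)\big)$ remain in a fixed compact subset of $(\mathbb{R}\times\mathbb{R}^n)\setminus(\{0\}\times q\mathbb{Z}^n)$, uniformly for $\phi$ in a neighbourhood of any given $\phi_0\in{\mathcal{B}}_{\partial\Omega,Q}^{1,\alpha}$. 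On that compact set $\Phi_{q,n}$ is $C^\infty$ by Theorem~\ref{Phinqthm}(iii), so the kernel is smooth and no singularity has to be handled; in particular, since $\Phi_{q,n}$ and all its derivatives vanish as $t\to0^+$ away from $q\mathbb{Z}^n$, the right-hand side is a $C^\infty$ function of $(t,x)$ on $[0,T]\times\overline{\Omega_0}$ that vanishes for $t=0$, hence in particular belongs to $C_0^{\frac{1+\alpha}{2};1+\alpha}([0,T]\times\overline{\Omega_0})$.

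It then remains to check that the map which sends $\phi$, together with ``frozen'' data $\eta$ (in place of $\nu_{\mathbb{I}[\phi]}\circ\phi$), $\vartheta$ (in place of $\theta[\phi]$) and $\psi$ (in place of $\tilde\sigma_n[\phi]$), to the function
\[
(t,x)\mapsto-\int_0^t\!\!\int_{\partial\Omega}\eta(\tilde y)\cdot\nabla_x\Phi_{q,n}\big(t-\tau,\,x-\phi(\tilde y)\big)\,\vartheta(\tau,\tilde y)\,\psi(\tilde y)\,d\sigma_{\tilde y}\,d\tau
\]
is of class $C^\infty$, on a neighbourhood of $\big(\phi_0,\nu_{\mathbb{I}[\phi_0]}\circ\phi_0,\theta[\phi_0],\tilde\sigma_n[\phi_0]\big)$, into $C_0^{\frac{1+\alpha}{2};1+\alpha}([0,T]\times\overline{\Omega_0})$. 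Being linear in $(\eta,\vartheta,\psi)$, this reduces to the $C^\infty$-dependence on $\phi$ of the kernel map $\phi\mapsto\big[(t,\tau,x,\tilde y)\mapsto\nabla_x\Phi_{q,n}(t-\tau,x-\phi(\tilde y))\big]$ with values in a suitable Schauder space, which is a superposition-operator statement: it follows from the $C^\infty$-smoothness of $\Phi_{q,n}$ on a neighbourhood of the relevant compact set, together with differentiation under the integral sign (legitimate thanks to the uniform bounds on the integrand and all its derivatives), in the spirit of the composition-operator results underlying the functional analytic approach (cf.~\cite{DaLaMu21}). Finally, composing this $C^\infty$ map with $\phi\mapsto\big(\phi,\nu_{\mathbb{I}[\phi]}\circ\phi,\theta[\phi],\tilde\sigma_n[\phi]\big)$ — which is $C^\infty$ by Proposition~\ref{prop:repsolheat} and Lemma~\ref{rajacon} — yields the claimed $C^\infty$-regularity of $\phi\mapsto u[\phi]_{|[0,T]\times\overline{\Omega_0}}$. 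I expect the main obstacle to be precisely the verification of the smoothness of the kernel map into the correct parabolic Schauder space: this needs the uniform-compactness argument above and the somewhat technical theory of composition (Nemytskii) operators on Schauder spaces, everything else being a matter of assembling already-established results.
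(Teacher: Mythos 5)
Your proposal follows essentially the same route as the paper: pull back the periodic double-layer potential to the fixed boundary $\partial\Omega$ (obtaining the paper's representation formula for $u[\phi]$), then combine the $C^\infty$-regularity of $\phi\mapsto\theta[\phi]$ (Proposition~\ref{prop:repsolheat}), the real analyticity of $\tilde\sigma_n[\phi]$ and $\nu_{\mathbb{I}[\phi]}\circ\phi$ (Lemma~\ref{rajacon}), and the smooth dependence of the non-singular kernel $\nabla_x\Phi_{q,n}(t-\tau,x-\phi(\eta))$ on $\phi$. The ``technical obstacle'' you correctly flag at the end --- the $C^\infty$-dependence of $\phi\mapsto\nabla_x\Phi_{q,n}(\cdot,\cdot-\phi(\cdot))$ into the parabolic Schauder space and the mapping properties of integral operators with such kernels --- is exactly what the paper handles by invoking \cite[Lem.~A.1, A.2, and A.3]{DaLu23}; your compactness argument showing that $x-\phi(\eta)$ stays in a fixed compact subset of $\mathbb{R}^n\setminus q\mathbb{Z}^n$ is the geometric input feeding those lemmas (the paper records it as the statement that $\phi\mapsto[(x,\eta)\mapsto x-\phi(\eta)]$ is $C^\infty$ into $C^{1,\alpha}(\overline{\Omega_0}\times\partial\Omega,\mathbb{R}^n\setminus q\mathbb{Z}^n)$). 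Two minor remarks: the paper does not prove the openness of ${\mathcal{B}}_{\partial\Omega,Q}^{1,\alpha}$ (it is taken as part of the set-up), so that paragraph of yours is dispensable; and the paper first reduces, without loss of generality, to $\Omega_0$ of class $C^{1,\alpha}$, a small convenience you omit that ensures the target Schauder space is well-behaved.
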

 \begin{proof}
 We proceed as in the proof of \cite[Thm.~5.6]{DaLuMoMu24}. Without loss of generality we can assume that $\Omega_0$ is of class $C^{1,\alpha}$. If $\phi \in {\mathcal{B}}_{\partial\Omega,Q}^{1,\alpha}$, then
 \begin{equation}\label{repfu2}
    u[\phi](t,x)= 
- \int_{0}^t\int_{\partial\Omega}\nu_{\mathbb{I}[\phi]}\circ \phi (\eta) \cdot \nabla_x
\Phi_{q,n} (t-\tau, x - \phi(\eta)) \theta[\phi](\tau,\eta) \tilde \sigma_n[\phi](\eta)\, d\sigma_\eta d\tau,
\end{equation}
for all $(t,x) \in [0,T] \times   {\overline{\mathbb{S}_q[\phi]^-}}$. Then we observe that {the} map, that takes a diffeomorphism $\phi$ to the function
\[
\overline{\Omega_0} \times \partial \Omega \ni (x,\eta) \mapsto  x -\phi(\eta)\in \mathbb{R}^n,
\]
is  of class $C^\infty$ from ${\mathcal{B}}_{\partial\Omega,Q}^{1,\alpha}$ to $C^{1,\alpha}(\overline{\Omega_0} \times \partial \Omega, \mathbb{R}^n \setminus q\mathbb{Z}^n)$. 
 By arguing as in the proof of \cite[Lem. A.1 and Lem. A.3]{DaLu23} regarding the regularity of superposition operators, we deduce that the map, that takes $\phi$ to the function
\[
  \nabla_x
\Phi_{q,n} (s, x-\phi(\eta))\qquad\forall (s,x,\eta)\in  [0,T]\times\overline{\Omega_0} \times \partial \Omega,
\]
is of class $C^{\infty}$ from ${\mathcal{B}}_{\partial\Omega,Q}^{1,\alpha}$ to $C_0^{\frac{1+\alpha}{2};1+\alpha}([0,T]\times (\overline{\Omega_0} \times \partial \Omega))$.

Then, the statement follows by the representation formula  \eqref{repfu2} for $u[\phi]$, by Proposition \ref{prop:repsolheat} on the smoothness of $\theta[\phi]$, by Lemma \ref{rajacon} on the analyticity of $\tilde \sigma_n[\phi]$ and of $\nu_{\mathbb{I}[\phi]}\circ \phi$, and by the regularity result on integral operators with non-singular kernels of   \cite[Lem.~A.2]{DaLu23}. \end{proof}

\section*{Acknowledgment}

The authors are members of the ``Gruppo Nazionale per l'Analisi Matematica, la Probabilit\`a e le loro Applicazioni'' (GNAMPA) of the ``Istituto Nazionale di Alta Matematica'' (INdAM). . Dalla Riva, P. Luzzini, and P. Musolino acknowledge the support of the
project funded by the EuropeanUnion - NextGenerationEU under the National Recovery and
Resilience Plan (NRRP), Mission 4 Component 2 Investment 1.1 - Call PRIN 2022 No. 104 of
February 2, 2022 of Italian Ministry of University and Research; Project 2022SENJZ3 (subject area: PE - Physical Sciences and Engineering) ``Perturbation problems and asymptotics for elliptic differential equations: variational and potential theoretic methods.''  M. Dalla Riva also  acknowledges the support  by MUR (Ministero dell'Universit\`a e della Ricerca) through the PNRR Project QUANTIP -- Partenariato Esteso NQSTI -- PE00000023 -- Spoke 9 -- CUP: E63C22002180006.

\end{document}